\documentclass{article}

\usepackage{amssymb, amsfonts, amsmath, amscd, amsthm} 
\usepackage{hyperref} 
\usepackage{xcolor, comment, multicol}
\usepackage[misc]{ifsym}

\theoremstyle{plain}
\newtheorem{thm}{Theorem}[section]
\newtheorem*{thm*}{Theorem}
\newtheorem{cor}[thm]{Corollary}
\newtheorem{prop}[thm]{Proposition}

\theoremstyle{definition}

\newtheorem{defn}[thm]{Definition}
\newtheorem{rem}[thm]{Remark}

\title{Orlicz Space Interpolation and Its Applications to Operator Convolution}
 \author{Wolfram Bauer, Robert Fulsche, Joachim Toft}

\begin{document}

\maketitle
\begin{abstract}
    We prove a strong-type interpolation result for noncommutative Orlicz spaces over semifinite von Neumann algebras. Based on this result, we obtain Young-type convolution estimates for the Weyl pseudodifferential symbols of operators in appropriate Orlicz-Schatten spaces. Equivalently, we prove convolution estimates of Young type for Werner's function-operator convolutions in quantum harmonic analysis. 

    \emph{2010 Mathematics Subject Classification.}  Primary: 46E30, 35S05, 47G30; Secondary: 47B10, 46L52

    \emph{Keywords:} Pseudodifferential operators, Weyl quantization, quantum
harmonic analysis, noncommutative Orlicz spaces
\end{abstract}

\section{Introduction}

The present paper is concerned with convolution estimates of operators and functions (in the sense of Werner's \emph{quantum harmonic analysis} \cite{werner84}), or equivalently with convolution estimates of pseudodifferential symbols of operators (cf.~\cite{Toft2002}). The $L^p$-versions of Young's convolution estimates for functions and operators, respectively for pseudodifferential symbols, have now been known for many years. In the present paper, we will extend these convolution estimates to spaces of Orlicz-type.

While there are connections between some of the applications considered here and the results in \cite{Bauer_Fulsche_Toft2025}, the main focus of the present paper is the development of interpolation theory for Orlicz spaces. In contrast to \cite{Bauer_Fulsche_Toft2025}, which relies on direct computations of Wigner distributions, the approach adopted here is more abstract, based on interpolation results for noncommutative Orlicz spaces over semifinite von Neumann algebras. This framework allows for a range of applications beyond those discussed in \cite{Bauer_Fulsche_Toft2025}.

For precise definitions of the objects used here we refer to the respective parts of the paper. Let $\mathcal M$ be a semifinite von Neumann algebra. By $L^0(\mathcal M)$ we denote the $\ast$-algebra of measurable operators. By $L^p(\mathcal M)$ and $L^p_w(\mathcal M)$ we denote the noncommutative $L^p$, respectively weak $L^p$ spaces affiliated with the von Neumann algebra. If $\mathcal N$ is another semifinite von Neumann algebra, a quasi-linear operator $T: L^0(\mathcal M) \to L^0(\mathcal N)$ is said to be of \emph{weak type} $(p,q)$ (where $0 < p \leq q \leq \infty)$ if there exists $C > 0$ such that for all $x \in L^p(\mathcal M)$ it holds true that:
\begin{align*}
    \| Tx\|_{L^q_w} \leq C \| x\|_{L^p}.
\end{align*} 
Further, $T$ is said to be of \emph{strong type} $(p, q)$ if there exists $D > 0$ such that:
\begin{align*}
    \| Tx\|_{L^q} \leq D \| x\|_{L^p}
\end{align*}
for all $x \in L^p(\mathcal M)$. If now $\Phi$ is a \emph{Young function}, we denote by $L^\Phi(\mathcal M)$ the affiliated noncommutative Orlicz space. By $p_\Phi, q_\Phi$ we denote the characteristic exponents of $\Phi$ (see below for details). Now, our main result regarding interpolation of noncommutative Orlicz spaces reads as follows:

\begin{thm*}
 Assume that $\Phi$ is a quasi-Young function satisfying $0 < p_0 < q_{\Phi}  \leq p_{\Phi} < p_1 \leq \infty$ and $T: L^0(\mathcal{M})  \rightarrow L^0(\mathcal{N})$ be a quasilinear operator which is of weak  type  $(p_i,p_i)$ for $i=0,1$ if $p_1< \infty$ and of strong type $(p_1,p_1)$ if $p_1= \infty$. Then $T$ is of $s(\Phi, \Phi)$-type, i.e.~there is $d>0$ such that 
\begin{equation*}
 \| Tx \|_{L^{\Phi}} \leq d \| x \|_{L^{\Phi}}, \hspace{5ex} \forall x \in L_{\Phi}(\mathcal{M}). 
 \end{equation*}
 \end{thm*}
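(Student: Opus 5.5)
We reduce the statement to the commutative Marcinkiewicz-type interpolation theorem for Orlicz spaces, using generalized singular values. For $x \in L^0(\mathcal M)$ let $\mu_t(x)$, $t>0$, denote the generalized singular value function of Fack--Kosaki. It is a decreasing, right-continuous function on $(0,\infty)$. We use two standard identities. First, $\|x\|_{L^\Phi(\mathcal M)} = \|\mu(x)\|_{L^\Phi(0,\infty)}$ (Luxemburg norm, or quasi-norm in the quasi-Young case), since $\tau(\Phi(|x|)) = \int_0^\infty \Phi(\mu_t(x))\,dt$. Second, $\|x\|_{L^p_w} = \sup_t t^{1/p}\mu_t(x)$. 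With these, everything can be expressed in terms of decreasing functions on the half-line.

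\textbf{Step 1: a Calder\'on-type pointwise estimate.} We will show that for every $t>0$
\begin{align*}
 \mu_t(Tx) \leq C\Big( t^{-1/p_0}\int_0^{t} s^{1/p_0}\mu_s(x)\,\frac{ds}{s} + t^{-1/p_1}\int_t^\infty s^{1/p_1}\mu_s(x)\,\frac{ds}{s}\Big),
\end{align*}
with the second term replaced by $\mu_t(x)$-type control when $p_1=\infty$. The route is through the $K$-functional. Fix $t$ and split $x = x_0 + x_1$, where $x_0 = x\,e_{(\mu_t(x),\infty)}(|x|)$ is the spectral truncation at level $\mu_t(x)$ and $x_1 = x - x_0$. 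Then $\mu(x_0) \le \mu(x)\chi_{(0,t)}$ and $\mu(x_1) \le \min(\mu(x),\mu_t(x))$. Quasilinearity gives $|Tx| \lesssim |Tx_0| + |Tx_1|$ in the sense of singular values, so that
\begin{align*}
 \mu_{2t}(Tx) \lesssim \mu_t(Tx_0) + \mu_t(Tx_1).
\end{align*}
The weak-type bounds then give $\mu_t(Tx_i) \le C t^{-1/p_i}\|x_i\|_{L^{p_i}}$ for $i=0,1$. For $p_1=\infty$ we use instead $\mu_t(Tx_1) \le \|Tx_1\|_\infty \le D\,\mu_t(x)$. These give Hardy-type integrals in $\mu(x)$. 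The $L^{p_i}$ norms produce $L^{p_i}$-averages rather than the $ds/s$-integrals written above. It is simpler to keep the averages
\begin{align*}
 \Big(t^{-1}\int_0^t\mu_s^{p_0}\,ds\Big)^{1/p_0} \quad\text{and}\quad \Big(t^{-1}\int_t^\infty \mu_s^{p_1}\,ds\Big)^{1/p_1}.
\end{align*}
Rescaling $2t \to t$ only costs a dilation, and dilations are bounded on $L^\Phi$ because $\Phi$ satisfies $\Delta_2$, which follows from $p_\Phi<\infty$.

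\textbf{Step 2: boundedness of the Hardy operators on $L^\Phi(0,\infty)$.} Set
\begin{align*}
 P_{p_0}f(t) = \Big(t^{-1}\int_0^t |f|^{p_0}\Big)^{1/p_0}, \qquad Q_{p_1}f(t) = \Big(t^{-1}\int_t^\infty |f|^{p_1}\Big)^{1/p_1}.
\end{align*}
We need both to be bounded on $L^\Phi$ on the cone of decreasing functions. Replacing $f$ by $|f|^{p_0}$ and $\Phi$ by $\Psi(u)=\Phi(u^{1/p_0})$ reduces $P_{p_0}$ to the classical Hardy averaging operator on $L^\Psi$. The Hardy averaging operator is bounded on $L^\Psi$ if and only if the lower index of $\Psi$ exceeds $1$, that is, $q_\Phi/p_0 > 1$; this is Boyd's theorem in its Orlicz form. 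Symmetrically, $Q_{p_1}$ reduces to the dual Hardy operator, which is bounded iff $p_\Phi/p_1 < 1$.

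To avoid invoking Boyd indices abstractly, we plan to verify these bounds modularly. The definition of the characteristic exponents gives, for some $\varepsilon>0$, the power-type estimates
\begin{align*}
 \Phi(\lambda u) &\le C\lambda^{q_\Phi-\varepsilon}\Phi(u) \quad (\lambda\le1),\\
 \Phi(\lambda u) &\le C\lambda^{p_\Phi+\varepsilon}\Phi(u) \quad (\lambda\ge1),
\end{align*}
with $q_\Phi-\varepsilon>p_0$ and $p_\Phi+\varepsilon<p_1$. We then apply Jensen-type and Minkowski integral estimates to bound $\int\Phi(P_{p_0}f) \lesssim \int\Phi(Cf)$, and similarly for $Q_{p_1}$. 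In the quasi-Young case, Jensen's inequality is not available directly for $\Phi$. Instead we apply it to $\Phi(u^{1/r})$ for a suitable $r$, which is convex after adjusting $\Phi$ to an equivalent function; the condition $q_\Phi>p_0$ gives enough room for this.

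\textbf{Step 3: conclusion.} Combining Steps 1 and 2 with the identity $\|Tx\|_{L^\Phi} = \|\mu(Tx)\|_{L^\Phi}$ gives
\begin{align*}
 \|Tx\|_{L^\Phi} \le d\,\|\mu(x)\|_{L^\Phi} = d\,\|x\|_{L^\Phi}.
\end{align*}
The main obstacle is Step 2 in the quasi-Young setting. It requires converting the index conditions into modular inequalities for $\Phi$, together with a regularization: replacing $\Phi$ by an equivalent function such that $\Phi(u^{1/r})$ is convex, uniformly enough to make the Luxemburg quasi-norms comparable. The other delicate point is the quasi-triangle inequality for singular values in Step 1. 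We plan to handle it with $\mu_{s+t}(a+b)\le\mu_s(a)+\mu_t(b)$ together with quasilinearity, in the form $|T(x_0+x_1)|\le K(|Tx_0|+|Tx_1|)$ understood via singular values.
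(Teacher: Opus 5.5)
Your proof is correct in structure but argues differently from the paper. One sub-step, the bound for $Q_{p_1}$, needs a different argument from the one you sketch.

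\textbf{Comparison with the paper.} The paper never bounds $\mu(Tx)$ pointwise. For each height $\alpha$ it truncates $x$ at level $\alpha/2K$ and uses $\lambda_{2K\alpha}(Tx)\le\lambda_\alpha(Tx_\alpha)+\lambda_\alpha(Tx^\alpha)$. It combines this with the layer-cake identity $\int_0^\infty\Phi(\mu_t(Tx))\,dt=\int_0^\infty\lambda_\alpha(Tx)\,d\Phi(\alpha)$ and the weak-type bounds. After Tonelli, the index conditions enter only through the elementary integrals $\int_0^{\mu_t(x)}\alpha^{q_\Phi-p_0-1}\,d\alpha$ and $\int_{2Ks}^\infty\alpha^{p_\Phi-p_1-1}\,d\alpha$. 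The quasi-Young case then costs nothing beyond $\Phi(u/C)\le C^{-r}\Phi(u)$.

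You instead truncate at the $t$-dependent level $\mu_t(x)$ and obtain a Calder\'on-type majorization $\mu_{2t}(Tx)\lesssim P_{p_0}\mu(x)(t)+Q_{p_1}\mu(x)(t)$. The term $t\mu_t(x)^{p_1}$ that you dropped from $\|x_1\|_{p_1}^{p_1}$, and the $p_1=\infty$ term, are harmless, because $\mu_t(x)\le P_{p_0}\mu(x)(t)$. This isolates the noncommutative input and leaves a one-dimensional Hardy problem on decreasing functions. The paper's route is shorter, self-contained, and gives the explicit constants used in Corollary~\ref{cor:interpolation}. Yours gives more: the same majorization yields boundedness on any rearrangement-invariant quasi-Banach space on which $P_{p_0}$ and $Q_{p_1}$ are bounded on the decreasing cone.

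\textbf{The $Q_{p_1}$ step.} This bound is not ``symmetric'' to the one for $P_{p_0}$. Substituting $g=f^{p_1}$ and $\Psi_1(u)=\Phi(u^{1/p_1})$ gives the operator $g\mapsto t^{-1}\int_t^\infty g$, which is not the dual Hardy operator $\int_t^\infty g(s)\,ds/s$. Moreover $\Psi_1$ has upper index $p_\Phi/p_1<1$, so it behaves like a concave function. Hence neither Boyd's theorem nor Jensen/Minkowski is available. In fact the inequality fails for non-monotone $g$: take $\Phi(u)=u^p$ with $p<p_1$ and $f^{p_1}=N\chi_{[1,1+1/N]}$.

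What works uses the monotonicity of $f=\mu(x)$ essentially:
\begin{itemize}
\item First, $t^{-1}\int_t^\infty f^{p_1}\le\sum_{k\ge0}2^kf(2^kt)^{p_1}$.
\item $\Psi_1$ is subadditive, since $\Psi_1(u)/u=\Phi(v)/v^{p_1}$ with $v=u^{1/p_1}$ is nonincreasing.
\item $\Psi_1(2^kw)\le 2^{kp_\Phi/p_1}\Psi_1(w)$.
\end{itemize}
Together these give $\int_0^\infty\Phi(Q_{p_1}f)\,dt\le\sum_{k\ge0}2^{-k(1-p_\Phi/p_1)}\int_0^\infty\Phi(f)\,dt$.

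\textbf{The $P_{p_0}$ step and dilation.} Your Jensen argument for $P_{p_0}$ is fine. The function $\Psi(u)=\Phi(u^{1/p_0})$ satisfies $q_\Psi=q_\Phi/p_0>1$, so $\Psi(u)/u$ is nondecreasing and $\Psi$ is equivalent to the convex function $\int_0^u\Psi(s)\,ds/s$. So the quasi-Young case needs no further regularization. Finally, boundedness of $f\mapsto f(\cdot/2)$ on $L^\Phi(0,\infty)$ comes from $\Phi(u/c)\le c^{-r}\Phi(u)$, that is, from convexity of $\Phi_0$, not from $\Delta_2$.
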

 We want to emphasize that a related result, giving a $w(\Phi, \Phi)$-type estimate, was obtained in \cite{Bekjan_Chen_Liu_Jiao2011}.

Our main motivation, and also main application, of investigating this interpolation result, are Young-type convolution estimates for symbols spaces of pseudodifferential operators. A detailed discussion of possible applications can also be found in the paper \cite{Bauer_Fulsche_Toft2025}. Let us briefly summarize the main findings of the present paper. 

In the following, we will denote by $L^\psi(\mathbb R^{2d})$ the Orlicz space on $\mathbb R^{2d}$ and by $s_\psi^w$ the class of Weyl symbols of all operators in the Orlicz-Schatten class $S^\psi(L^2(\mathbb R^d))$, where $\psi$ usually denotes a Young function. We refer to subsequent sections of this paper for precise definitions. The first principal result is the following:

\begin{thm*}
Let $\psi_0, \psi_1, \psi_2$ be Young functions with $1 < q_{\psi_j} \leq p_{\psi_j} < \infty$ for $j = 0, 1, 2$ and for all $s>0$ satisfying the relation
\begin{equation}
    s\psi_0^{-1}(s) = \psi_1^{-1}(s) \psi_2^{-1}(s). 
\end{equation}
Further, assume that $p_{\psi_1}^{-1} + p_{\psi_2}^{-1} > 1$ (this notation will be introduced later). Then convolutions act as continuous bilinear operators: 
\begin{align*}
    &\ast: L^{\psi_2}(\mathbb R^{2d}) \times L^{\psi_1}(\mathbb R^{2d}) \to L^{\psi_0}(\mathbb R^{2d}),\\
    &\ast: s_{\psi_2}^w \times L^{\psi_1}(\mathbb R^{2d}) \to s_{\psi_0}^w,\\
    &\ast: s_{\psi_2}^w \times s_{\psi_1}^w \to L^{\psi_0}(\mathbb R^{2d}).
\end{align*}
\end{thm*}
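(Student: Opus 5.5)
The plan is to obtain all three estimates from the first main interpolation theorem (strong type $(\Phi,\Phi)$ from weak types $(p_0,p_0)$ and $(p_1,p_1)$), applied to \emph{linear} convolution operators with one factor frozen. The one factor is then decomposed along the Orlicz scale. The commutative case $L^{\psi_2}\ast L^{\psi_1}\to L^{\psi_0}$ has the same structure as the operator cases, with $\mathcal M=L^\infty(\mathbb R^{2d})$. Passing between symbols and operators via the Weyl correspondence, the other two estimates become Werner's convolutions $f\ast S$ and $S\ast T$. There $\mathcal M$ or $\mathcal N$ is $\mathcal B(L^2(\mathbb R^d))$ with the standard trace, so $s^w_\psi$ is isometric to $S^\psi=L^\psi(\mathcal B(L^2))$. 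The available $L^p$ inputs are the classical Young inequalities of quantum harmonic analysis: for $1/p+1/q=1+1/r$ one has $\|f\ast g\|_r\le\|f\|_p\|g\|_q$, $\|f\ast S\|_{S^r}\le \|f\|_p\|S\|_{S^q}$ and $\|S\ast T\|_{L^r}\le\|S\|_{S^p}\|T\|_{S^q}$.

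\textbf{Step 1.} Fix $g\in L^{\psi_1}$ with $\|g\|_{L^{\psi_1}}\le 1$ and decompose it dyadically by level sets, $g=\sum_k g_k$ with $g_k=g\,\chi_{\{2^k\le|g|<2^{k+1}\}}$. Each $g_k$ lies in every $L^q$, with $\|g_k\|_q\approx 2^k\mu_k^{1/q}$ where $\mu_k$ is the measure of the level set. For each $k$, the operator $T_k=\cdot\ast g_k$ is of strong type $(p,r)$ for all admissible pairs.

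\textbf{Step 2.} This is the main obstacle: the interpolation theorem gives only $\Phi\to\Phi$ bounds with the same Young function, whereas we need $\psi_2\to\psi_0$. I would handle it with a change of variables in the Orlicz scale. Using $s\psi_0^{-1}(s)=\psi_1^{-1}(s)\psi_2^{-1}(s)$, one writes a pointwise Young-type inequality $ab\le \psi_1^{-1}(\cdot)\psi_2^{-1}(\cdot)$ and reduces the estimate to a diagonal one. Concretely, I would consider the operator $x\mapsto (x\ast g_k)$ composed with a rescaling $u\mapsto$ (multiplication by the level value $2^k$ and dilation of the Young function). This reduces matters to showing that the sublinear maximal-type operator
\[
\mathcal T x=\sup_k \frac{|x\ast g_k|}{\lambda_k}
\]
(with suitable normalisations $\lambda_k$ dictated by $\psi_0^{-1}/\psi_2^{-1}=s/\psi_1^{-1}$) is of weak types $(p_0,p_0)$ and $(p_1,p_1)$ for some $p_0<q_{\psi_2}\le p_{\psi_2}<p_1$. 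The requirement $p_{\psi_1}^{-1}+p_{\psi_2}^{-1}>1$ ensures that the exponents $r$ produced by Young's inequality stay in $[1,\infty]$ and that the sum over $k$ converges geometrically. Similarly, $1<q_{\psi_j}\le p_{\psi_j}<\infty$ guarantees room to choose $p_0,p_1$ on both sides of the characteristic exponents. Summing the resulting $\psi$-modular estimates over $k$, using the $\Delta_2$ property implied by finite $p_{\psi_j}$, yields $\|f\ast g\|_{L^{\psi_0}}\lesssim \|f\|_{L^{\psi_2}}\|g\|_{L^{\psi_1}}$.

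\textbf{Step 3.} The operator-valued versions follow by the same scheme. In the second estimate, decompose the function factor $g$ and interpolate in $S^\psi$ on the side of $S$ using the noncommutative theorem. In the third, decompose $T$ spectrally, $T=\sum_k T\,E_{\{2^k\le|T|<2^{k+1}\}}$ in polar form, and interpolate in $S$. The spectral pieces have $S^q$-norms $2^k\operatorname{tr}(E_k)^{1/q}$, exactly as in the commutative case, so the Step 2 bookkeeping transfers verbatim. Finally, Weyl quantization converts the operator statements back into the stated symbol estimates.
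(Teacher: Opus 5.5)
\textbf{There is a genuine gap, and it sits in your Step 2.} You correctly identify the obstacle, but you never actually overcome it.

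Theorem \ref{thm:interpolation_strong_type} is diagonal: weak types $(p_0,p_0)$ and $(p_1,p_1)$ give $L^\Phi\to L^\Phi$ with the same $\Phi$ on both sides. The operator $x\mapsto x\ast g$, however, has to map $L^{\psi_2}$ into $L^{\psi_0}$.

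\begin{itemize}
\item Even granting that $\mathcal T x=\sup_k|x\ast g_k|/\lambda_k$ is of weak types $(p_0,p_0)$ and $(p_1,p_1)$, the theorem only yields $\|\mathcal Tx\|_{L^{\psi_2}}\lesssim\|x\|_{L^{\psi_2}}$. From $|x\ast g|\le(\sum_k\lambda_k)\,\mathcal Tx$ you then get at best $x\ast g\in L^{\psi_2}$, and only if $\sum_k\lambda_k<\infty$. You never get membership in $L^{\psi_0}$.
\item A diagonal bound $L^p\to L^p_w$ for $x\mapsto x\ast g_k$ uses only $\|g_k\|_{L^1}\approx 2^k\mu_k$. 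This discards exactly the information $g\in L^{\psi_1}$ that is supposed to produce $\psi_0$.
\item The off-diagonal strong types $(p,r)$ from your Step 1 cannot be fed into the theorem at all.
\item ``Rescaling by the level value and dilation of the Young function'' does not define any operator to which the theorem applies.
\item The role you give to $p_{\psi_1}^{-1}+p_{\psi_2}^{-1}>1$ is asserted, not derived.
\end{itemize}

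Step 3 inherits the same gap.

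\textbf{The missing idea} is that the change of Young function should come from bilinear complex interpolation, with the diagonal theorem used only for endpoint estimates. The paper proceeds as follows.

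\begin{enumerate}
\item It applies Corollary \ref{cor:interpolation} to frozen-factor operators whose other factor lies in $L^1$ or $s^w_1$. This gives $L^1\times L^\Phi\to L^\Phi$, $s^w_1\times L^\Phi\to s^w_\Phi$, $s^w_\Phi\times L^1\to s^w_\Phi$ and $s^w_\Phi\times s^w_1\to L^\Phi$ for every $\Phi$ with $1<q_\Phi\le p_\Phi<\infty$ (Proposition \ref{prop_1}).
\item It interpolates bilinearly between $L^{\Phi_1}\times L^1\to L^{\Phi_1}$ and $L^1\times L^{\Phi_2}\to L^{\Phi_2}$, and between the Schatten analogues. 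It uses $[L^{\varphi_0},L^{\varphi_1}]_{[\theta]}=L^{[\varphi_0,\varphi_1]_{[\theta]}}$ from Theorem \ref{thm:complex_interpolation_orlicz}. The result is $L^{[\Phi_{[1]},\Phi_1]_{[\theta_1]}}\times L^{[\Phi_{[1]},\Phi_2]_{[\theta_2]}}\to L^{[\Phi_1,\Phi_2]_\Theta}$.
\item Proposition \ref{Prop-construction-Phi-functions} solves $\Phi_j^{-1}(\tau)=\psi_j^{-1}(\tau)^{1/\theta_j}\tau^{-(1-\theta_j)/\theta_j}$. With the relation $s\psi_0^{-1}(s)=\psi_1^{-1}(s)\psi_2^{-1}(s)$, the three interpolated functions are then exactly $\psi_1,\psi_2,\psi_0$.
\end{enumerate}

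This is where $p_{\psi_1}^{-1}+p_{\psi_2}^{-1}>1$ genuinely enters. It allows a choice $\theta_1+\theta_2=1$ with $\theta_j>1-1/p_{\psi_j}$, and that is precisely what makes each $\Phi_j$ increasing. The $\Phi_j$ are afterwards replaced by equivalent convex Young functions.

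If you want to stay with a real-variable Marcinkiewicz route instead, you would have to formulate and prove an off-diagonal Marcinkiewicz theorem for (noncommutative) Orlicz spaces. That is a substantially different result from the diagonal one you invoke. You would also still have to handle a factor $g$ that lies only in $L^{\psi_1}$ rather than in a fixed $L^q$.
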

Our second main theorem is the following, where we denote for a function $a: \mathbb R^{2d} \to \mathbb C$ and $t \in \mathbb R$ by $a_t$ the dilated function $a_t(z) = a(tz)$.
\begin{thm*}
Let $\psi_0, \psi_1, \psi_2$ be Young functions with $\Delta_2$ and for all $s>0$ satisfying the relation
\begin{equation*}
    s\psi_0^{-1}(s) = \psi_1^{-1}(s) \psi_2^{-1}(s). 
\end{equation*}
Further, assume that $p_{\psi_1}^{-1} + p_{\psi_2}^{-1} > 1$. Let $0 \neq s, t \in \mathbb R$ such that $\pm s^{-2} \pm t^{-2} = 1$ for some combination of $+$ and $-$. Then, the dilated convolution
    \begin{align*}
        (a, b) \mapsto a_s \ast b_t
    \end{align*}
    extends to a continuous bilinear map
    \begin{align*}
        s_{\psi_2}^w \times s_{\psi_1}^w \to s_{\psi_0}^w.
    \end{align*}
\end{thm*}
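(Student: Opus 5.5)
We prove the statement by the same route as the first convolution theorem: the $L^p$-version is known, and we lift it to Orlicz spaces through the noncommutative interpolation theorem stated above. The $L^p$-version says that for $\pm s^{-2}\pm t^{-2}=1$ the map $(a,b)\mapsto a_s\ast b_t$ is bounded
\[
s_{p_2}^w\times s_{p_1}^w\to s_{p_0}^w, \qquad \tfrac1{p_1}+\tfrac1{p_2}=1+\tfrac1{p_0},
\]
up to constants depending on $s,t$ (Toft's dilated Young inequalities for Weyl symbols). The reason is that $a_s\ast b_t$ is, up to metaplectic and symplectic rescalings, a twisted convolution or a Weyl product of rescaled symbols. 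Since the target is an operator-valued space, we work throughout with the von Neumann algebra $\mathcal N=\mathcal L(L^2(\mathbb R^d))$, equipped with the trace, and with $\mathcal M=\mathcal N$ or $\mathcal M=L^\infty(\mathbb R^{2d})$ as needed.

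\textbf{Step 1: reduction to linear maps.} Bilinear interpolation is not directly available in the theorem above, so we freeze one argument. Fix $b\in s_{\psi_1}^w$ and set $T_b a=a_s\ast b_t$. We want $\|T_b a\|_{s_{\psi_0}^w}\lesssim\|a\|_{s_{\psi_2}^w}\|b\|_{s_{\psi_1}^w}$. The relation $s\psi_0^{-1}(s)=\psi_1^{-1}(s)\psi_2^{-1}(s)$ is the Orlicz analogue of the Young exponent relation. It suggests the factorization argument used for the first theorem, which writes the estimate through duality and a Hölder-type inequality for Orlicz spaces. This reduces the problem to operators of the form $a\mapsto a_s\ast b_t$ with $b$ normalized, acting between $L^{p}$-type spaces with fixed exponent gaps.

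\textbf{Step 2: endpoint estimates.} The characteristic exponents satisfy $1<q_{\psi_j}\le p_{\psi_j}<\infty$, which follows from $\Delta_2$ together with the relation, or is imposed as in the first theorem. We choose $p_0<q_{\psi}$ and $p_1>p_{\psi}$ close to the characteristic exponents of the relevant Young function. The condition $p_{\psi_1}^{-1}+p_{\psi_2}^{-1}>1$ guarantees room to pick admissible exponent pairs for which the $L^p$ dilated Young inequality holds at both endpoints. Each endpoint gives strong type, and hence weak type, $(p_i,p_i)$ for the operator in the form produced by Step 1. The interpolation theorem then yields the $s(\Phi,\Phi)$-bound.

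\textbf{Alternative and main obstacle.} If the freezing argument does not close directly, we reduce to the first theorem instead. Using the symplectic Fourier transform and the identity relating $a_s\ast b_t$ to a Weyl product $\mathrm{Op}^w(a)\,\mathrm{Op}^w(b)$ or a function–operator convolution after rescaling, we combine two of the three continuity statements of the first theorem, namely $s^w\times L\to s^w$ and $s^w\times s^w\to L$. The dilation invariance of Orlicz norms up to constants, guaranteed by $\Delta_2$, and the unitary invariance of $S^\psi$ under metaplectic operators control the rescalings. The main obstacle is Step 1: matching the mixed three-function relation to single-Young-function interpolation. This requires showing that the bilinear Orlicz bound follows from the one-parameter estimates, via a pointwise inequality $\psi_0^{-1}$-Hölder for singular value functions together with the first theorem's argument.
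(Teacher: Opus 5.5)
Your proposal has a genuine gap exactly where you locate the ``main obstacle'', and neither of your two routes closes it. The Orlicz interpolation result available here (Theorem~\ref{thm:interpolation_strong_type}, Corollary~\ref{cor:interpolation}) is \emph{diagonal}: weak types $(p_0,p_0)$ and $(p_1,p_1)$ give $L^\Phi\to L^\Phi$ with the same $\Phi$ on both sides. Your frozen operator $T_b a=a_s\ast b_t$, $b\in s^w_{\psi_1}$, must map $s^w_{\psi_2}\to s^w_{\psi_0}$ with $\psi_2\neq\psi_0$. Toft's $L^p$ estimates can use $b$ only through a power-type norm (e.g.\ via $s^w_{\psi_1}\subset s^w_{p_{\psi_1}}$). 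So they give $T_b$ only off-diagonal bounds $s^w_p\to s^w_r$ with $1/r=1/p-(1-1/p_{\psi_1})$. These are not the $(p_i,p_i)$ hypotheses of the theorem. Even an off-diagonal interpolation of them would at best give the result with $\psi_1$ replaced by $t^{p_{\psi_1}}$, a weaker target than $s^w_{\psi_0}$.

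``Duality plus an Orlicz H\"older inequality'' does not bridge this. The first theorem is not proved that way, and the pointwise H\"older inequality for singular values you invoke is never formulated. The alternative route also breaks. The map $a\mapsto a_t$ with $t^2\neq1$ is not a symplectic change of variables, hence is not implemented by a metaplectic operator, and $s^w_\psi$ is not dilation invariant. For example, $e^{ix\cdot\xi}\in s^w_\infty$, but its dilate $e^{2ix\cdot\xi}$ by $t=\sqrt2$ is not. The paper itself notes that symbol dilations have no convenient QHA reformulation. Finally, $q_{\psi_j}>1$ does not follow from $\Delta_2$ and the relation: $\psi_1(t)=t$, $\psi_0=\psi_2$ is a counterexample. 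The paper's proof assumes $1<q_{\psi_j}\le p_{\psi_j}<\infty$.

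The missing idea is the paper's two-stage mechanism.

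\textbf{Stage 1.} Freeze the other argument in $s^w_1$, not in $s^w_{\psi_1}$. Theorem~\ref{thm:dilated_Toft} with $(p_1,p_2,r)=(1,1,1)$, $(\infty,1,\infty)$, $(1,\infty,\infty)$ makes $a\mapsto a_s\ast b_t$ (with $\|b\|_{s^w_1}=1$), and symmetrically $b\mapsto a_s\ast b_t$, linear of strong types $(1,1)$ and $(\infty,\infty)$. Corollary~\ref{cor:interpolation} then yields $s^w_\Phi\times s^w_1\to s^w_\Phi$ and $s^w_1\times s^w_\Phi\to s^w_\Phi$ for every Young $\Phi$ with $1<q_\Phi\le p_\Phi<\infty$.

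\textbf{Stage 2.} Interpolate bilinearly by the complex method (Theorem~\ref{thm:complex_interpolation_orlicz}) between these two estimates, with auxiliary $\Phi_1,\Phi_2$ and $\Theta=(\theta_1,\theta_2)\in\overset{\circ}{S}_2$. This gives $s^w_{[\Phi_{[1]},\Phi_1]_{[\theta_1]}}\times s^w_{[\Phi_{[1]},\Phi_2]_{[\theta_2]}}\to s^w_{[\Phi_1,\Phi_2]_\Theta}$. Then solve backwards as in Proposition~\ref{Prop-construction-Phi-functions}. The function $\Phi_j^{-1}(\tau)=\psi_j^{-1}(\tau)^{1/\theta_j}\tau^{-(1-\theta_j)/\theta_j}$ is increasing when $\theta_j>1-1/p_{\psi_j}$. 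The hypothesis $p_{\psi_1}^{-1}+p_{\psi_2}^{-1}>1$ is exactly what allows such $\theta_j$ with $\theta_1+\theta_2=1$. The relation $s\psi_0^{-1}(s)=\psi_1^{-1}(s)\psi_2^{-1}(s)$ then gives $[\Phi_1,\Phi_2]_\Theta=\psi_0$. It remains to check $1<q_{\Phi_j}\le p_{\Phi_j}<\infty$, which is where $q_{\psi_j}>1$ is used, and to pass to equivalent convex functions (Remark~\ref{Remark_equivalent_functions}).

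Your Step~2 alludes to the role of $p_{\psi_1}^{-1}+p_{\psi_2}^{-1}>1$. It contains neither this factorization nor the complex-interpolation step, and that step is what actually lets three different Young functions appear.
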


The paper is organized as follows: In Section \ref{sec:interpolation}, we discuss noncommutative Orlicz spaces and a certain interpolation theorem on these spaces, which is one of the main technical tools in the proofs of our results. Section \ref{sec:3} is dedicated to the formulation and proof of the first theorem mentioned above. Indeed, we will not only prove the result mentioned earlier, but a multi-linear version of this theorem, describing mapping properties of iterated convolution. In Section \ref{sec:4} we will then deal with our second main theorem, i.e., the result concerning dilated convolution. Just as for the first result, we will also provide a more general version than the one mentioned above, yielding mapping properties of iterated dilated convolutions. We added Section \ref{sec:5} which describes how the results we obtained translate into the language of function-operator convolutions in R.~Werner's quantum harmonic analysis \cite{werner84}. Finally, in Section \ref{sec:discussion}, we added a short discussion on possible generalizations of our results.

\section{Interpolation in noncommutative Orlicz spaces}\label{sec:interpolation}
We say that $\Phi: [0, \infty) \rightarrow [0, \infty]$ is a Young function (in the sense of \cite{Bekjan_Chen_Liu_Jiao2011,Bonino_etal2023}) if: 
 \begin{itemize}
 \item[(a)] $\Phi$ is convex and increasing,
 \item[(b)] $\Phi(0)=0$, 
 \item[(c)] $\lim_{t \rightarrow \infty} \Phi(t)=+ \infty$. 
 \end{itemize}
Further, we say that a function $\Phi: [0, \infty) \rightarrow [0, \infty]$ is a \emph{quasi-Young function} (or \emph{$r$-Young function}) if there exists some $0 < r \leq 1$ and a Young function $\Phi_0$ such that $\Phi(t) = \Phi_0(t^r)$ for all $t \geq 0$.

 Recall that $\Phi$ satisfies the {\it $\Delta_2$-condition} if there is some $C>0$ such that for all $t>0$: 
 \begin{equation}\label{GL-Delta-2-condition}
 \Phi(2t) \leq C \Phi(t).
 \end{equation}
 Throughout the paper we frequently assume (\ref{GL-Delta-2-condition}) to hold. It is equivalent to assuming for any $\alpha>0$ the existence of a constant $C_{\alpha}>0$ such that 
 \begin{equation*}
\Phi(\alpha t) \leq C_{\alpha} \Phi(t) \hspace{4ex} \forall \; t>0.  
 \end{equation*}
 We write $\Phi_+^{\prime}(t)$ for the derivative of $\Phi$ from the right and define:
 \begin{equation}\label{GL_definition_q_Phi_p_Phi}
 q_{\Phi}:= \inf_{t>0} \frac{t \Phi_+^{\prime}(t)}{\Phi(t)} \hspace{4ex} \mbox{\it and } \hspace{4ex} p_{\Phi}:= \sup_{t>0} \frac{t \Phi_+^{\prime}(t)}{\Phi(t)}. 
 \end{equation}
 Then it is well-known that $0 < q_{\Phi} \leq p_{\Phi} \leq \infty$ (and $1 \leq q_\Phi$ whenever $\Phi$ is a Young function). We have an equivalent characterization of these numbers \cite{Bekjan_Chen_Liu_Jiao2011}: 
 \begin{align}\label{GL_equivalent_Characterization_exponent}
 q_{\Phi}&= \sup \Big{\{} p>0 \: : \: \frac{\Phi(t)}{t^p} \; \textup{is nondecreasing  } \forall \: t>0\Big{\}},\\
 p_{\Phi}&= \inf \Big{\{} q>0 \: : \: \frac{\Phi(t)}{t^q} \; \textup{is nonincreasing }\forall  \: t>0\Big{\}}. \label{GL_equivalent_Characterization_exponent2}
 \end{align}
 \begin{rem}
     We remark that $\Phi$ satisfies the $\Delta_2$-condition if and only if $p_{\Phi} < \infty$. 
 \end{rem}
Let $\mathcal{M}$ be a semifinite von Neumann algebra of operators acting on a Hilbert space $\mathcal H$ with positive part $\mathcal M_+$ and 
 equipped with a normal, semifinite faithful trace $\tau$. We recall the definition of these terms (see, e.g., \cite{Pisier_Xu2003}):
\begin{itemize}
    \item $\mathcal M$ being semifinite means that the identity can be written as a sum of mutually orthogonal finite projections.
    \item A trace $\tau$ is given by a map $\tau: \mathcal M_+ \to [0, \infty]$ satisfying $\tau(x^\ast x) = \tau(x x^\ast)$ for all $x \in \mathcal M$. Upon considering $\mathcal M_{\tau} = \{ x \in \mathcal M_+: ~\tau(x) < \infty\}$, the $\tau$ is extended by linearity to $\operatorname{span}\mathcal M_\tau$. Then, for all $x, y \in \operatorname{span}\mathcal M_\tau$, the trace satisfies $\tau(xy) = \tau(yx)$. 
    \item A trace is called \emph{normal} if for every $x \in \mathcal M_+$ and each net $(x_\gamma)_{\gamma \in \Gamma} \subset \mathcal M_+$ increasing towards $x$, it is $\tau(x) = \sup_{x \in \Gamma} \tau(x_\gamma)$.
    \item A trace is called \emph{faithful} if $\tau(x^\ast x) = 0$ implies $x = 0$.
    \item Finally, a trace is called \emph{semifinite} if for each $x \in \mathcal M_+$ there exists an increasing net $(x_\gamma)_{\gamma \in \Gamma} \subset \mathcal M_\tau$ converging strongly to $x$.
\end{itemize}
As is well-known, a von Neumann algebra is semifinite if and only if it admits a normal, faithful and semifinite trace on it.

 We write $L^0(\mathcal{M})=L^0(\mathcal{M}, \tau)$ for the $*$-algebra of measurable operators with respect to $(\mathcal{M}, \tau)$ 
 (see \cite{Fack_Kosaki1986} or \cite[Chapter 4]{Hiai2021} for definitions and further details). 
 \begin{defn}
 Let $x \in L^0(\mathcal{M})$ and by $e_{(\alpha, \infty)}(|x|)$ for $\alpha >0$ we denote the spectral projection of $|x|$ associated to the interval $(\alpha, \infty)$. With $t>0$ 
 we write: 
 \begin{align*}
 \lambda_{\alpha}(x):&= \tau \big{(} e_{(\alpha, \infty)}(|x|) \big{)}, \\
 \mu_t(x):&= \inf \big{\{} s >0 \: : \: \lambda_s(x)\leq t \big{\}}. 
 \end{align*}
 \end{defn}
 The maps $\alpha \mapsto \lambda_{\alpha}(x)$ and $t \mapsto \mu_t(x)$ are decreasing and continuous from the right. 

 With respect to these quantities the non-commutative weak $L^p$ spaces $L^p_w(\mathcal{M})$, where $0<p<\infty$, can be defined as the collection of  $x \in L^0(\mathcal{M})$ 
 for which 
 \begin{equation*}
 \|x \|_{L^p_w}:= \sup_{t>0} t^{\frac{1}{p}} \mu_t(x) < \infty.
 \end{equation*}
 We refer to \cite{Bekjan_Chen_Liu_Jiao2011} for some other useful descriptions of $\| \cdot \|_{L^p_w}$. In particular, by \cite[Eq.\ (2.4)]{Bekjan_Chen_Liu_Jiao2011}, we have
 \begin{align}\label{char:weaknorm}
     \| x\|_{L^p_w} = \sup_{s > 0} s\lambda_s(x)^{1/p}.
 \end{align}
 Moreover, the {\it non-commutative Orlicz space} is defined as 
 \begin{equation*}
 L^{\Phi}(\mathcal{M}):= \Big{\{} x \in L^0(\mathcal{M}) \: : \:  \exists\: c>0, \: N(c):=\int_0^{\infty} \left[t \Phi \left( \frac{\mu_t(x)}{c} \right)\right] \frac{dt}{t} < \infty \Big{\}}
 \end{equation*}
 equipped with the quasi-norm: 
 \begin{equation*}
 \|x\|_{L^{\Phi}} := \inf \Big{\{} c>0 \: :\:  N(c) \leq 1 \Big{\}}.  
  \end{equation*}
  In particular, in the case $\Phi(t)=t^p$ where $p > 0$, we have 
  \begin{equation*}
  \|x\|_{L^p}^p= \int_0^{\infty} \mu_t(x)^p dt. 
  \end{equation*}
 Similarly, the noncommutative weak Orlicz space can be defined as
 \begin{align*}
      L^\Phi_w(\mathcal M) := \{ x \in L^0(\mathcal M): ~\exists c > 0 \text{ s.th.\ } \sup_{t > 0} t \Phi(\mu_t(x)/c) < \infty\},
 \end{align*}
 which is endowed with
 \begin{align*}
     \| x\|_{L^\Phi_w} = \inf \{ c > 0:~ t\Phi(\mu_t(x)/c) \leq 1 \text{ for each } t > 0\}.
 \end{align*}
 We note that $\|x\|_{L^{\Phi}_w} \leq \|x\|_{L^{\Phi}}$ and 
 $L^{\Phi}(\mathcal{M}) \subset L^{\Phi}_w(\mathcal{M})$ (see \cite[Proposition 3.2(4)]{Bekjan_Chen_Liu_Jiao2011}).
 
 These notions generalize the classical Orlicz spaces $L^{\Phi}$ and weak Orliczs spaces $L_w^{\Phi}$ on a complete non-atomic measure space as treated in \cite{Bonino_etal2023, Liu_Wang2013}.
 Non-commutative (weak) 
 Orlicz spaces also generalize the non-commutative (weak) $L^p$-spaces in \cite{Pisier_Xu2003}, where $1 \leq p \leq \infty$. We write $L^p(\mathcal{M})$  
 instead of $L^{\Phi}(\mathcal{M})$ 
 in the case $\Phi(t)=t^p$ with $0 < p< \infty$. Moreover,  we put 
 $L_w^{\infty}(\mathcal{M})= L^{\infty}(\mathcal{M}):= \mathcal{M}$ equipped with the operator norm. 

Let  $\mathcal{N}$ be a second semifinite von Neumann algebra with normal semifinite faithful trace $\nu$.
 Below we prove an interpolation theorem for quasi-linear operators. We consider operators
 \begin{equation}\label{GL_quasilinear_operator}
 T: L^0(\mathcal{M})  \rightarrow L^0(\mathcal{N}). 
 \end{equation}
Recall the following definition (see \cite{Bekjan_Chen_Liu_Jiao2011}): 
 \begin{defn}
 The operator $T$ in \eqref{GL_quasilinear_operator} is called {\it quasilinear} if 
 \begin{itemize}
 \item[(a)] $|T(\lambda x) | \leq |\lambda| |Tx|$ for all $x \in L^0(\mathcal{M})$ and all $\lambda \in \mathbb{C}$.
 \item[(b)] There is $K>0$ such that for all $x,y \in L^0(\mathcal{M})$ there exist partial isometries $u,v \in \mathcal{N}$ such that 
 \begin{equation}\label{GL-inequality-quasi-linear-operator} 
 |T(x+y)| \leq K \big{(} u^* |Tx| u+v^*|Ty| v \big{)}. 
 \end{equation}
 \end{itemize}
 \end{defn}  
 A quasilinear operator $T: L^0(\mathcal{M})  \rightarrow L^0(\mathcal{N})$ is said to be of {\it weak type} $(p,q)$ with $0 <p \leq q\leq \infty$ if there is $C>0$ such that for all $x \in L_p(\mathcal{M})$:
 \begin{equation}\label{GL_weak-type-estimate}
 \|Tx \|_{L^{q}_w} \leq C\|x\|_{L^{p}}.
 \end{equation}
 We call $T$ of {\it strong type} $(p,q)$ if there is $D>0$ such that 
 \begin{equation*}
     \|Tx\|_{L^q} \leq D \|x \|_{L^{p}}.
 \end{equation*}
\cite[Theorem 4.2]{Bekjan_Chen_Liu_Jiao2011} yields the following result:
\begin{thm}\label{thm:weak_type_interpolation}
    Let $T: L^0(\mathcal M, \tau) \to L^0(\mathcal N, \nu)$ be quasilinear and $\Phi$ a Young function. Further, let $0 < p_0 < q_\Phi \leq p_\Phi < p_1 \leq \infty$. Assume that $T$ is of weak types $(p_0,p_0)$ and $(p_1, p_1)$ if $p_1 < \infty$ or of weak type $(p_0, p_0)$ and of strong type $(p_1, p_1)$ if $p_1 = \infty$. Then, there exists $C > 0$ such that
    \begin{align*}
        \| Tx\|_{L^\Phi_w}\leq C \| x\|_{L^\Phi_w}
    \end{align*}
    for every $x \in L^\Phi_w(\mathcal M)$.
\end{thm}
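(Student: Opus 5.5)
This statement is quoted from \cite[Theorem 4.2]{Bekjan_Chen_Liu_Jiao2011}, so one option is simply to invoke it. Below is the self-contained argument I would give, a Marcinkiewicz-type proof with the weak Orlicz quasi-norm as target.

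\textbf{Reduction.} By quasi-homogeneity of $T$ and of $\|\cdot\|_{L^\Phi_w}$, it suffices to show the following: if $x \in L^\Phi_w(\mathcal M)$ satisfies $\|x\|_{L^\Phi_w}\le 1$, i.e.\ $\Phi(\mu_t(x))\le 1/t$ for all $t>0$, then $s\,\Phi(\mu_s(Tx)/C)\le 1$ for all $s>0$.

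\textbf{Decomposition.} Fix $s>0$ and a level $\alpha>0$ to be chosen in terms of $s$. Split $x = x_0 + x_1$ with
\[
x_0 = x\,e_{(\alpha,\infty)}(|x|), \qquad x_1 = x\,e_{[0,\alpha]}(|x|).
\]
Then $\mu_t(x_0)=\mu_t(x)\chi_{[0,\lambda_\alpha(x))}(t)$ and $\mu_t(x_1)=\min(\mu_t(x),\alpha)$. Quasilinearity \eqref{GL-inequality-quasi-linear-operator} together with the Fack--Kosaki inequality $\mu_{t_0+t_1}(a+b)\le \mu_{t_0}(a)+\mu_{t_1}(b)$, and the invariance $\mu_t(u^*yu)\le\mu_t(y)$ for partial isometries $u$, gives
\[
\mu_{s}(Tx)\le K\big(\mu_{s/2}(Tx_0)+\mu_{s/2}(Tx_1)\big).
\]

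\textbf{Estimating the two pieces.} By the weak type assumptions,
\[
\mu_{s/2}(Tx_i)\le C(2/s)^{1/p_i}\|x_i\|_{L^{p_i}}, \qquad i=0,1.
\]
If $p_1=\infty$, the strong type gives instead $\mu(Tx_1)\le D\alpha$, and we pick $\alpha$ so that $D\alpha$ is a small multiple of $\Phi^{-1}(1/s)$. Otherwise choose $\alpha=\Phi^{-1}(1/s)$. The norms are then bounded using $\mu_t(x)\le \Phi^{-1}(1/t)$:
\[
\|x_0\|_{p_0}^{p_0}\le\int_0^{s}\Phi^{-1}(1/t)^{p_0}\,dt, \qquad
\|x_1\|_{p_1}^{p_1}\le s\,\alpha^{p_1}+\int_s^\infty \Phi^{-1}(1/t)^{p_1}\,dt.
\]
Here $\lambda_\alpha(x)\le s$ by the choice of $\alpha$.

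\textbf{Main obstacle: the integrals.} The key step, which I expect to be the main work, is to bound these integrals by $s\,\Phi^{-1}(1/s)^{p_i}$ up to a constant. Using \eqref{GL_equivalent_Characterization_exponent}--\eqref{GL_equivalent_Characterization_exponent2}, pick $p_0<q'<q_\Phi$ and $p_\Phi<p'<p_1$, so that $\Phi(t)/t^{q'}$ is nondecreasing and $\Phi(t)/t^{p'}$ is nonincreasing. Consequently $\Phi^{-1}(u)/u^{1/q'}$ is nonincreasing and $\Phi^{-1}(u)/u^{1/p'}$ is nondecreasing. This yields, for $t\le s$,
\[
\Phi^{-1}(1/t)\le (s/t)^{1/q'}\,\Phi^{-1}(1/s),
\]
so the $p_0$ integral converges because $p_0/q'<1$. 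For $t\ge s$ it yields
\[
\Phi^{-1}(1/t)\le (s/t)^{1/p'}\,\Phi^{-1}(1/s),
\]
so the $p_1$ integral converges because $p_1/p'>1$. These strict exponent gaps are exactly what the hypotheses on $p_0,p_1$ provide.

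\textbf{Conclusion.} Combining the above, $\mu_s(Tx)\le C'\Phi^{-1}(1/s)$ for every $s>0$. With $C=\max(C',1)$, convexity of $\Phi$ and $\Phi(0)=0$ give $\Phi(\mu_s(Tx)/C)\le \Phi(\Phi^{-1}(1/s))\le 1/s$, which is the claimed weak-type bound. The same argument works for quasi-Young $\Phi$ after accounting for the constants.
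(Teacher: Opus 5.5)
The paper does not prove this statement; it imports it from \cite[Theorem 4.2]{Bekjan_Chen_Liu_Jiao2011}. The only piece of that proof the paper reproduces is the level-set inequality \eqref{GL_inequality-distribution-function}, where $x$ is cut at the same height $\alpha$ at which the distribution of $Tx$ is measured. For the strong-type Theorem~\ref{thm:interpolation_strong_type}, that inequality is integrated against $d\Phi(\alpha)$ and controlled with Tonelli and the derivative bounds $q_\Phi\le \alpha\Phi_+^{\prime}(\alpha)/\Phi(\alpha)\le p_\Phi$.

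Your argument is a correct, self-contained proof by the dual, rearrangement-side route. You fix a time $s$ and cut $x$ at the height $\Phi^{-1}(1/s)$ given by the envelope $\mu_t(x)\le\Phi^{-1}(1/t)$. In place of \eqref{GL_inequality-distribution-function} you use the Fack--Kosaki bound $\mu_s(Tx)\le K(\mu_{s/2}(Tx_0)+\mu_{s/2}(Tx_1))$. The derivative computations are replaced by two power comparisons for $\Phi^{-1}$ with intermediate exponents $p_0<q'<q_\Phi$ and $p_\Phi<p'<p_1$.

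This gives a short proof with explicit constants, well suited to a quasi-norm that is a supremum over $s$. Since convexity is never used, it also covers quasi-Young $\Phi$, as you say. What it does not give by itself is the modular bound of Theorem~\ref{thm:interpolation_strong_type}, where one must sum over all levels; the integrated level-set argument supplies exactly that.

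Three small repairs are needed.

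First, $\mu_t(x_1)=\min(\mu_t(x),\alpha)$ is not an equality in general. Discarding the part of $|x|$ above $\alpha$ shifts the rearrangement rather than truncating it: $\mu_t(x_1)=\mu_{t+\lambda_\alpha(x)}(x)$ when $\lambda_\alpha(x)<\infty$. Only the inequality $\mu_t(x_1)\le\min(\mu_t(x),\alpha)$ holds, but that is all you use.

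Second, for $p_1=\infty$ nothing needs to be made small. Keep $\alpha=\Phi^{-1}(1/s)$, so that $\lambda_\alpha(x)\le s$ still holds and $\mu_{s/2}(Tx_1)\le\|Tx_1\|_{L^\infty}\le D\,\Phi^{-1}(1/s)$. Shrinking $\alpha$ would break $\lambda_\alpha(x)\le s$, on which your bound for $\|x_0\|_{L^{p_0}}$ relies.

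Third, state why $\Phi^{-1}$ is a genuine inverse. Since $p_\Phi<\infty$, $\Phi$ is finite and continuous. The fact that $\Phi(t)/t^{p'}$ is nonincreasing forces $\Phi(t)>0$ for $t>0$, since otherwise $\Phi$ would vanish on a half-line. Hence $\Phi$ is a strictly increasing bijection of $[0,\infty)$. This justifies $\mu_t(x)\le\Phi^{-1}(1/t)$ and $\Phi(\Phi^{-1}(1/s))=1/s$, and the final step then needs only monotonicity of $\Phi$, not convexity.
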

On the other hand, \cite{Liu_Wang2013} obtains an analogous estimate of strong type for Orlicz spaces of commutative von Neumann algebras. We discuss now the general strong type interpolation result in the non-commutative setting. We point out that the proof follows by a suitable modification of the proof for the commutative statement.

\begin{thm}\label{thm:interpolation_strong_type}
 Assume that $\Phi$ is a quasi-Young function with $0 < p_0 < q_{\Phi}  \leq p_{\Phi} < p_1 \leq \infty$ and $T: L^0(\mathcal{M})  \rightarrow L^0(\mathcal{N})$ be a quasilinear operator which is of weak  type  $(p_i,p_i)$ for $i=0,1$ if $p_1< \infty$ and of strong type $(p_1,p_1)$ if $p_1= \infty$. Then $T$ is of $s(\Phi, \Phi)$-type, i.e.~there is $d>0$ such that 
\begin{equation*}
 \| Tx \|_{L^{\Phi}} \leq d \| x \|_{L^{\Phi}}, \hspace{5ex} \forall x \in L^{\Phi}(\mathcal{M}). 
 \end{equation*}
 \end{thm}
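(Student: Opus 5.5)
We follow the classical Marcinkiewicz argument, as in the commutative proof of Liu and Wang, with the distribution function replaced by $\lambda_s(\cdot)$ computed with respect to the traces $\tau$ and $\nu$. The first step is to rewrite the Orlicz modular through the distribution function. For $x\in L^0(\mathcal M)$ we have $\mu_t(x)>s$ if and only if $t<\lambda_s(x)$. By Fubini,
\begin{align*}
 \int_0^\infty \Phi\Big(\frac{\mu_t(x)}{c}\Big)\,dt=\int_0^\infty \Phi'\Big(\frac{s}{c}\Big)\frac{1}{c}\,\lambda_s(x)\,ds .
\end{align*}
Here $\Phi'$ is the right derivative; for a quasi-Young function $\Phi=\Phi_0(t^r)$ it exists a.e. and the identity still holds, since $\Phi$ is absolutely continuous and increasing. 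By homogeneity (property (a) of quasilinearity), it suffices to show that $\|x\|_{L^\Phi}\le 1$ implies $\int_0^\infty \Phi'(s)\lambda_s(Tx)\,ds\le C$ for a fixed constant $C$. In fact it suffices to bound $\int \Phi(\mu_t(Tx)/d)\,dt$ by $1$ for a suitable $d$, and the $\Delta_2$-type growth encoded in $p_\Phi<\infty$ lets us move constants in and out of the argument.

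Next comes the decomposition. Fix $s>0$ and split $x=x_0^s+x_1^s$ with $x_0^s=x\,e_{(s,\infty)}(|x|)$ and $x_1^s=x\,e_{[0,s]}(|x|)$. Both are measurable, and $\mu_t(x_0^s)=\mu_t(x)\chi_{[0,\lambda_s(x))}(t)$, $\mu_t(x_1^s)=\min(\mu_t(x),s)$. Quasilinearity yields partial isometries $u,v$ with $|Tx|\le K(u^*|Tx_0^s|u+v^*|Tx_1^s|v)$. From this we get the standard estimate $\lambda_{2K\sigma s}(Tx)\le\lambda_{\sigma s}(Tx_0^s)+\lambda_{\sigma s}(Tx_1^s)$; this follows from $\mu_{t_1+t_2}(a+b)\le\mu_{t_1}(a)+\mu_{t_2}(b)$ together with $\mu_t(u^*au)\le\mu_t(a)$. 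We then apply the weak type hypotheses via \eqref{char:weaknorm}:
\begin{align*}
 \lambda_{\sigma s}(Tx_0^s)&\le C_0^{p_0}(\sigma s)^{-p_0}\int_0^{\lambda_s(x)}\mu_t(x)^{p_0}\,dt,\\
 \lambda_{\sigma s}(Tx_1^s)&\le C_1^{p_1}(\sigma s)^{-p_1}\int_0^{\infty}\min(\mu_t(x),s)^{p_1}\,dt .
\end{align*}
If $p_1=\infty$, we choose $\sigma$ with $\sigma>D$. Then $\|Tx_1^s\|\le Ds<\sigma s$, so the second term vanishes and only the $p_0$ part remains.

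Finally we integrate against $\Phi'(2K\sigma s)\,ds$ and exchange the order of integration. For the $p_0$ term this gives $\int_0^\infty \mu_t(x)^{p_0}\int_0^{\mu_t(x)}\Phi'(cs)s^{-p_0}\,ds\,dt$. Since $p_0<q_\Phi$, the function $\Phi(s)/s^{q}$ is nondecreasing for some $q\in(p_0,q_\Phi)$, and together with $s\Phi'(s)\le p_\Phi\Phi(s)$ this gives $\int_0^{u}\Phi'(cs)s^{-p_0}ds\lesssim u^{-p_0}\Phi(cu)$. For the $p_1$ term we split according to whether $\mu_t(x)<s$ or $\mu_t(x)\ge s$. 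We then use $p_1>p_\Phi$, with $\Phi(s)/s^{q'}$ nonincreasing for some $q'\in(p_\Phi,p_1)$, to get $\int_u^\infty \Phi'(cs)s^{-p_1}ds\lesssim u^{-p_1}\Phi(cu)$; the piece with $\mu_t(x)\ge s$ is handled the same way. Altogether $\int\Phi(\mu_t(Tx)/d)\,dt\lesssim\int\Phi(c'\mu_t(x))\,dt$, and choosing $d$ large (using the power bounds between $q_\Phi$ and $p_\Phi$ to absorb constants) yields the claim. I expect the main obstacle to be the noncommutative distribution estimate $\lambda_{2K\sigma s}(Tx)\le\lambda_{\sigma s}(Tx_0^s)+\lambda_{\sigma s}(Tx_1^s)$ coming from the partial-isometry form of quasilinearity. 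I would try to prove it via the Fack–Kosaki inequalities for $\mu_t$ of sums and unitary conjugates. A second delicate point is handling quasi-Young $\Phi$, where $q_\Phi<1$ is possible, through the exponent characterizations \eqref{GL_equivalent_Characterization_exponent}, \eqref{GL_equivalent_Characterization_exponent2} rather than through convexity.
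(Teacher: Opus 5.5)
Your proposal is correct and follows essentially the paper's proof. The shared steps are:
\begin{itemize}
\item the same splitting $x = x e_{(s,\infty)}(|x|) + x e_{[0,s]}(|x|)$;
\item the distribution inequality $\lambda_{2K\beta}(Tx)\le \lambda_\beta(Ty)+\lambda_\beta(Tz)$ coming from quasilinearity, which the paper simply cites from Bekjan--Chen--Liu--Jiao and which your Fack--Kosaki sketch does establish;
\item the weak and strong type bounds, then Tonelli;
\item the monotonicity of $\Phi(t)/t^{q}$ for $q$ on either side of $[q_\Phi,p_\Phi]$;
\item $\Delta_2$ together with $\Phi(u/d)\le d^{-q_\Phi}\Phi(u)$ to normalize at the end.
\end{itemize}
One small correction: $\mu_t(x e_{[0,s]}(|x|))$ is in general $\mu_{t+\lambda_s(x)}(x)$, not $\min(\mu_t(x),s)$. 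However, you only use the valid inequality $\mu_t(x e_{[0,s]}(|x|))\le \min(\mu_t(x),s)$, which is equivalent to the paper's bound $p_1\int_0^s \lambda_\sigma(x)\sigma^{p_1-1}\,d\sigma$, so nothing breaks.
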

 \begin{rem}
     Indeed, for the application we have in mind, it is not suitable to start with operators acting as $T: L^0(\mathcal M) \to L^0(\mathcal N)$. Instead, we will have to consider operators mapping
     \begin{align*}
         T: L^{p_0}(\mathcal M) + L^{p_1}(\mathcal M) \to L^{p_0}(\mathcal N) + L^{p_1}(\mathcal N).
     \end{align*}
     Nevertheless, the statement of the theorem, as well as the proof, carry over verbatim to this setting.
 \end{rem}
 \begin{proof} 
 Let $x \in L^{\Phi}(\mathcal{M})$ with $\|x\|_{L^{\Phi}}=1$. For any $\alpha >0$ we decompose $x= x_{\alpha}+ x^{\alpha}$, where: 
 \begin{equation*}
 x_{\alpha}:=x e_{(\alpha, \infty)}(|x|) \hspace{4ex} \mbox{\it and } \hspace{4ex} x^{\alpha} := x e_{[0,\alpha]}(|x|). 
 \end{equation*}
Let $K>0$ denote the constant in \eqref{GL-inequality-quasi-linear-operator}.  It has been shown in the proof of \cite[Theorem 4.2]{Bekjan_Chen_Liu_Jiao2011} that 
 \begin{equation} \label{GL_inequality-distribution-function}
 \lambda_{2K\alpha}(Tx) \leq \lambda_{\alpha} \big{(}Tx_{\alpha}\big{)} + \lambda_{\alpha} \big{(} Tx^{\alpha} \big{)}.
 \end{equation}
We estimate the following integral: 
 \begin{align*}
& \int_0^{\infty} \Phi \big{(} \mu_t(Tx) \big{)} dt
 = \int_0^{\infty} \textup{vol} \Big{(} t>0 \: : \: \mu_t \big{(} Tx) > \alpha \Big{)} d\Phi(\alpha).
 \end{align*}
 Here $\textup{``vol''}$ means the volume with respect to the Lebesgue measure on  $\mathbb{R}_+=[0, \infty)$. Note that $\mu_t(Tx)\geq \alpha$ if and only if 
 $\lambda_{\alpha}(Tx) \geq t$ and therefore we have
 \begin{equation*}
 \textup{vol} \Big{(} t>0 \: : \: \mu_t \big{(} Tx) > \alpha \Big{)}= \lambda_{\alpha}(Tx)
 \end{equation*}
 proving that 
 \begin{align}
\int_0^{\infty} \Phi \big{(} \mu_t(Tx) \big{)} dt
&=  \int_0^{\infty}\lambda_{\alpha}(Tx) d \Phi(\alpha) \notag\\
& \leq   \int_0^{\infty}\lambda_{\alpha/2K}\big{(} Tx_{\alpha/2K} \big{)} d\Phi(\alpha) + \int_0^{\infty}\lambda_{\alpha/2K}\big{(} Tx^{\alpha/2K} \big{)} d\Phi(\alpha). 
\label{GL-quasilinear-interpolation}
 \end{align}
 Here we have used \eqref{GL_inequality-distribution-function} in the second line. In what follows we have by \eqref{char:weaknorm} for all $\alpha>0$: 
 \begin{equation*}
 \lambda_{\alpha/2K}(Tx_{\alpha/2K})\leq \frac{(2K)^{p_0}}{\alpha^{p_0}} \|Tx_{\alpha/2K}\|_{L^{p_0}_w}^{p_0}.
 \end{equation*}
 We estimate both summands on the right hand side of \eqref{GL-quasilinear-interpolation} separately. Using \eqref{GL_weak-type-estimate} with $q=p=p_0$ and $C=C_0>0$ we obtain: 
 \begin{align*}
  \int_0^{\infty}\lambda_{\alpha/2K}\big{(} Tx_{\alpha/2K}\big{)} d\Phi(\alpha) 
  & \leq (2K)^{p_0} \int_0^{\infty} \frac{\| Tx_{\alpha/ 2K}\|^{p_0}_{L^{p_0}_w}}{\alpha^{p_0}}d\Phi(\alpha) \\
  &\leq  (2KC_0)^{p_0}  \int_0^{\infty} \frac{\| x_{\alpha/ 2K}\|^{p_0}_{L^{p_0}}}{\alpha^{p_0}}d\Phi(\alpha) \\
  &=(2KC_0)^{p_0} \int_0^{\infty}  \frac{1}{\alpha^{p_0}} \int_0^{\infty} \mu_t(x_{\alpha/ 2K})^{p_0} dt ~d\Phi(\alpha)=(*).
   \end{align*}
We calculate the integrand: note that for all $\alpha >0$: 
\begin{equation*}
\lambda_s(x_{\alpha})= 
\begin{cases}
\lambda_{\alpha}(x) & \text{\it if } \; s \leq \alpha\\
\lambda_s(x) & \text{\it if } \: s \geq \alpha. 
\end{cases}
\end{equation*} 
Hence it follows \footnote{ Clearly $\lambda_s(x_{\alpha}) \leq \lambda_s(x)$ for all $s>0$ showing that $\mu_t(x_{\alpha}) \leq \mu_t(x)$ for all $t>0$. If $\lambda_{\alpha}(x)>t$, then 
$\lambda_s(x_{\alpha})> t$ for $s \leq \alpha$. Therefore $\mu_t(x_{\alpha})= \inf\{ s> \alpha \:: \:  \lambda_s(x_{\alpha}) \leq t\}=\inf \{ s>\alpha \: : \: \lambda_s(x) \leq t\} 
\geq \mu_t(x)$.}
\begin{equation*}
\mu_t(x_{\alpha})= \inf\{ s \geq 0 \: : \: \lambda_s(x_{\alpha}) \leq t \}=
\begin{cases}
0 & \textup{\it if } \; \lambda_{\alpha}(x) \leq t,\\
\mu_t(x) & \textup{\it if } \: \lambda_{\alpha}(x) >t.
\end{cases}
\end{equation*}
In conclusion: 
\begin{align*}
(*)=(2KC_0)^{p_0} \int_0^{\infty}\frac{1}{\alpha^{p_0}} \int_0^{\lambda_{\alpha/2K}(x)} \mu_t(x)^{p_0} dt ~d \Phi(\alpha). 
\end{align*}
By $\chi_A$ we denote the characteristic function of a set $A \subset \mathbb{R}_+$. Then, by definition we have 
$\chi_{[0, \lambda_{\alpha}(x)]}(t)= \chi_{[0, \mu_t(x)]}(\alpha)$ and therefore we can write by Tonellis theorem: 
\begin{align*}
(*)&=(2KC_0)^{p_0} \int_0^{\infty} \mu_t(x)^{p_0} \int_0^{\mu_t(x)} \frac{\Phi_+^{\prime}(\alpha)}{\alpha^{p_0}} d\alpha ~dt\\
&= (2KC_0)^{p_0} \int_0^{\infty} \mu_t(x)^{p_0} \int_0^{\mu_t(x)} \frac{\Phi(\alpha)}{\alpha^{q_{\Phi}}} \cdot \frac{\alpha \Phi_+^{\prime}(\alpha)}{\Phi(\alpha)} \alpha^{q_{\Phi}-p_0-1} d\alpha ~dt \\
& \leq p_{\Phi}(2KC_0)^{p_0} \int_0^{\infty} \mu_t(x)^{p_0} \frac{\Phi(\mu_t(x))}{\mu_t(x)^{q_{\Phi}}} \int_0^{\mu_t(x)} \alpha^{q_{\Phi}-p_0-1} d\alpha ~dt\\
&= \frac{p_{\Phi}}{q_{\Phi}-p_0} (2KC_0)^{p_0} \int_0^{\infty} \Phi\big{(} \mu_t(x) \big{)} dt 
\leq  \frac{p_{\Phi}}{q_{\Phi}-p_0} (2KC_0)^{p_0}, 
\end{align*}
where we have used the definition \eqref{GL_definition_q_Phi_p_Phi} of $p_{\Phi}$ and  the fact that the assignment 
\begin{equation*}
(0, \infty) \ni \alpha \mapsto \frac{\Phi(\alpha)}{\alpha^{q_{\Phi}}}
\end{equation*}
is non-decreasing as well as the assumption $\|x\|_{L^{\Phi}}=1$ (in the last inequality). 
\vspace{1ex}\par 
First, we assume that $p_1< \infty$, and we estimate the second term on the right of \eqref{GL-quasilinear-interpolation} in a similar way. Using \eqref{GL_weak-type-estimate} with $q=p=p_1$ and $C=C_1>0$ we obtain: 
\begin{align*}
 \int_0^{\infty}\lambda_{\alpha/2K}\big{(} Tx^{\alpha/2K} \big{)} d\Phi(\alpha) 
 &\leq  (2K)^{p_1} \int_0^{\infty} \frac{\|Tx^{\alpha/2K}\|_{L^{p_1}_w}^{p_1}}{\alpha^{p_1}} d\Phi(\alpha)\\
 &\leq (2KC_1)^{p_1} \int_0^{\infty}\frac{\|x^{\alpha/2K}\|_{L^{p_1}}^{p_1}}{\alpha^{p_1}} d\Phi(\alpha)\\
  &=(2KC_1)^{p_1} \int_0^{\infty}  \frac{1}{\alpha^{p_1}} \int_0^{\infty} \mu_t(x^{\alpha/ 2K})^{p_1} dt ~d\Phi(\alpha)=(**).
\end{align*}
Note that $\lambda_s(x^{\alpha})= 0$ if $s >\alpha$ and hence $\mu_t(x^{\alpha}) \leq \alpha$ for all $t>0$. We estimate the inner integral from above: 
\begin{align*}
\int_0^{\infty} \mu_t(x^{\alpha/ 2K})^{p_1} dt 
&= p_1\int_0^{\infty} \lambda_s(x^{\alpha/2K}) s^{p_1-1} ds\\
&=p_1\int_0^{\alpha/2K} \lambda_s(x^{\alpha/2K}) s^{p_1-1} ds\leq p_1\int_0^{\alpha/2K} \lambda_s(x) s^{p_1-1} ds. 
\end{align*}
Inserting above shows: 
\begin{align*}
(**)& \leq p_1 (2KC_1)^{p_1} \int_0^{\infty} \int_0^{\alpha/2K}\alpha^{-p_1} \lambda_s(x) s^{p_1-1} ds ~d\Phi(\alpha)\\ 
&= p_1 (2KC_1)^{p_1} \int_0^{\infty}\lambda_s(x) s^{p_1-1} \int_{2Ks}^{\infty} \alpha^{-p_1}\Phi_+^{\prime}(\alpha) d\alpha ~ds\\
&=p_1 (2KC_1)^{p_1} \int_0^{\infty}\lambda_s(x) s^{p_1-1} \int_{2Ks}^{\infty} \frac{\alpha\Phi_+^{\prime}(\alpha)}{\Phi(\alpha)} \cdot
 \frac{\Phi(\alpha)}{\alpha^{p_{\Phi}}}   \alpha^{p_{\Phi}-p_1-1} d\alpha ~ds. 
\end{align*}
Using the definition of $p_{\Phi}$ in (\ref{GL_definition_q_Phi_p_Phi}) and the fact that 
\begin{equation*}
(0, \infty) \ni \alpha \mapsto  \frac{\Phi(\alpha)}{\alpha^{p_{\Phi}}} 
\end{equation*}
is non-increasing shows: 
\begin{align*}
(**)& \leq p_1p_{\Phi} (2KC_1)^{p_1} \int_0^{\infty}\lambda_s(x) s^{p_1-1}\frac{\Phi(2Ks)}{(2Ks)^{p_{\Phi}}} \int_{2Ks}^{\infty} \alpha^{p_{\Phi}-p_1-1} d\alpha ~ds\\
&= \frac{p_1p_{\Phi}}{p_1-p_{\Phi}}2KC_1^{p_1} \int_0^{\infty}\lambda_s(x) \underbrace{\frac{\Phi(2Ks)}{(2Ks) \Phi_+^{\prime}(2Ks)} }_{\leq \frac{1}{q_{\Phi}}}\Phi^{\prime}_+(2Ks)ds\\
&=  \frac{p_1p_{\Phi}}{q_{\Phi}(p_1-p_{\Phi})}C_1^{p_1}\int_0^{\infty} \Phi \big{(} 2K \mu_t(x)) dt\\
&\leq \frac{p_1p_{\Phi}}{q_{\Phi}(p_1-p_{\Phi})}C_1^{p_1}C_K\int_0^{\infty} \Phi \big{(} \mu_t(x)) dt\leq  \frac{p_1p_{\Phi}}{q_{\Phi}(p_1-p_{\Phi})}C_1^{p_1}C_K. 
\end{align*}
Here we choose $C_K>0$ with $\Phi(2Kt) \leq C_K \Phi(t)$ for all $t>0$. Moreover, in the last inequality we have used $\| x\|_{L^{\Phi}}=1$. Combing the above inequalities shows: 
\begin{equation}\label{theorem-interpolation-Orlicz-GL}
\int_0^{\infty} \Phi \big{(} \mu_t(Tx) \big{)} dt \leq  \frac{p_{\Phi}}{q_{\Phi}-p_0} (2KC_0)^{p_0}+ \frac{p_1p_{\Phi}}{q_{\Phi}(p_1-p_{\Phi})}C_1^{p_1}C_K. 
\end{equation}
\par 
By $C>0$ we denote the maximum of 1 and the right hand side. Since $\Phi(0)=0$, $\Phi(t) = \Phi_0(t^r)$ with $r \in (0, 1]$ and $\Phi_0$ is convex we conclude from $1/C\leq 1$: 
\begin{equation}\label{GL_inequality_convexity}
\Phi\Big{(}\frac{\mu_t(Tx)}{C}\Big{)} \leq \frac{1}{C^r} \Phi(\mu_t(Tx))
\end{equation} 
and therefore, according to (\ref{theorem-interpolation-Orlicz-GL}): 
\begin{equation*}
N(C)=\int_0^{\infty} \Phi \Big{(} \frac{\mu_t(Tx)}{C} \Big{)} dt\leq \frac{1}{C^r} \int_0^{\infty} \Phi \big{(} \mu_t(Tx) \big{)} dt \leq 1. 
\end{equation*}
Therefore $\|Tx\|_{L^{\Phi}} \leq C = C \|x\|_{L^{\Phi}}$ proving the statement for $p_1< \infty$. 
\vspace{1ex}\par
Assume now that $p_1= \infty$. Since $T$ is of strong type $(p_1,p_1)$ by assumption we have $C_1>0$ such that
\begin{equation*}
\|Tx^{\alpha}\|_{L^{\infty}}= \|Tx^{\alpha} \|_{\textup{op}} \leq C_1 \|x^{\alpha}\|_{\textup{op}} \leq  C_1 \big{\|} |x| e_{[0, \alpha]}(|x|)\big{\|} \leq C_1 \alpha. 
\end{equation*}
Replacing $T$ by $C_1^{-1} T$ we may, without loss of generality, assume that $C_1=1$. As a consequence we have
\begin{equation*}
\lambda_{\alpha/2K} \big{(} Tx^{\alpha/2K} \big{)} =\tau \Big{(} e_{(\alpha/2K, \infty)}(|Tx^{\alpha/2K}|) \Big{)}=0
\end{equation*}
and (\ref{GL-quasilinear-interpolation}) above shows: 
\begin{align*}
\int_0^{\infty} \Phi\big{(} \mu_t(Tx) \big{)} dt \leq \int_0^{\infty} \lambda_{\alpha/2K}\big{(} Tx_{\alpha/2K} \big{)} d\Phi(\alpha)\leq \frac{p_{\Phi}}{q_{\Phi}-p_0} (2KC_0)^{p_0}.
\end{align*}
Let $\widetilde{C}$ denote the maximum of $1$ and the quantity of the right hand side. Applying \eqref{GL_inequality_convexity} again shows: 
\begin{equation*}
\int_0^{\infty} \Phi\Big{(} \frac{\mu_t(Tx)}{\widetilde{C}} \Big{)} dt \leq 1. 
\end{equation*} 
It follows that $\|Tx\|_{L^{\Phi}} \leq \widetilde{C} = \widetilde{C} \|x\|_{L^{\Phi}}$, finishing the proof. 
 \end{proof} 

Note that the proof we gave yields explicit bounds on the norm of the interpolated operator. For a special instance of the result, we formulate these bounds explicitly:
\begin{cor}\label{cor:interpolation}
Assume that $\Phi$ is a Young function satisfying $1  < q_\Phi \leq p_\Phi < \infty$ and that
\begin{align*}
    T: L^1(\mathcal M) + L^\infty(\mathcal M) \to L^1(\mathcal N) + L^\infty(\mathcal N)
\end{align*}
is linear. Further, assume that $T$ is of weak type $(1, 1)$ and of strong type $(\infty, \infty)$. Then,
\begin{align*}
    T: L^\Phi(\mathcal M) \to L^\Phi(\mathcal N)
\end{align*}
is bounded with
\begin{align*}
    \| T\|_{L^\Phi \to L^\Phi} \leq C_1 \max \{ 1, 2\frac{p_\Phi}{q_\Phi - 1} C_0\},
\end{align*}
where
\begin{align*}
    C_0 = \| T\|_{L^1 \to L^1_w}, \quad C_1 = \| T\|_{L^\infty \to L^\infty}.
\end{align*}
\end{cor}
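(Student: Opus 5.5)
The plan is to specialise the proof of Theorem \ref{thm:interpolation_strong_type} to $p_0=1$, $p_1=\infty$, keeping track of every constant instead of normalising $C_1$ away. Because $T$ is linear, the quasilinearity constant is $K=1$ with $u=v=1$, so \eqref{GL_inequality-distribution-function} holds with $2K=2$. One caveat comes first: I expect the bound to come out as $\max\{C_1,\,2\frac{p_\Phi}{q_\Phi-1}C_0\}$. This implies the displayed bound only when $C_1\ge 1$, and I believe the displayed form cannot hold in general when $C_1<1$ (see the last paragraph).

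\textbf{Step 1 (the $L^\infty$ part vanishes).} Fix $x$ with $\|x\|_{L^\Phi}=1$ and $c>0$, and put $y=x/c$. For $\alpha>0$ split $y$ at height $\beta=\alpha/(2C_1)$. Strong type $(\infty,\infty)$ gives $\|Ty^{\beta}\|_{\mathrm{op}}\le C_1\beta=\alpha/2$. Hence $\lambda_{\alpha/2}(Ty^\beta)=0$, and therefore $\lambda_\alpha(Ty)\le\lambda_{\alpha/2}(Ty_\beta)$.

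\textbf{Step 2 (the $L^1$ part).} By \eqref{char:weaknorm} and weak type $(1,1)$,
\begin{equation*}
\lambda_\alpha(Ty)\le \frac{2C_0}{\alpha}\|y_\beta\|_{L^1}=\frac{2C_0}{\alpha}\int_0^{\lambda_\beta(y)}\mu_t(y)\,dt .
\end{equation*}
Integrate this against $d\Phi(\alpha)$ and swap the integrals by Tonelli, using that $t<\lambda_\beta(y)$ exactly when $\alpha<2C_1\mu_t(y)$. Then bound the inner integral with $\alpha\Phi'_+(\alpha)\le p_\Phi\Phi(\alpha)$ and the monotonicity of $\Phi(\alpha)/\alpha^{q_\Phi}$, exactly as in the computation of $(*)$. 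This gives
\begin{equation*}
\int_0^\infty\Phi(\mu_t(Ty))\,dt\le \frac{p_\Phi}{q_\Phi-1}\,\frac{C_0}{C_1}\int_0^\infty\Phi\bigl(2C_1\mu_t(y)\bigr)\,dt .
\end{equation*}

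\textbf{Step 3 (choice of $c$).} Take $c=2C_1M$ with $M\ge1$. Convexity of $\Phi$ together with $\Phi(0)=0$ gives $\Phi(s/M)\le\Phi(s)/M$, so $\int_0^\infty\Phi(2C_1\mu_t(y))\,dt\le 1/M$. Hence $N(c)\le 1$ as soon as $M\ge\frac{p_\Phi C_0}{(q_\Phi-1)C_1}$. If instead one splits at $\beta=\theta\alpha/C_1$ for a parameter $\theta\in(0,1)$ and optimises over $\theta$, the result is at best of the form $\max\{C_1,\,2\frac{p_\Phi}{q_\Phi-1}C_0\}$, which is the same as what normalising $T$ by $C_1$ in the proof of Theorem \ref{thm:interpolation_strong_type} gives. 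When $C_1\ge1$ we have $\max\{C_1,aC_0\}\le C_1\max\{1,aC_0\}$ with $a=2\frac{p_\Phi}{q_\Phi-1}$, so in that case the stated bound follows.

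\textbf{Main obstacle.} The obstacle is the regime $C_1<1$, and I expect it cannot be overcome. The claimed bound $C_1\max\{1,aC_0\}$ is not homogeneous of degree one in $T$. Apply it to $\lambda T$ and let $\lambda\to0$: since $\lambda C_0\to0$, this forces $\|T\|_{L^\Phi\to L^\Phi}\le\|T\|_{L^\infty\to L^\infty}$. That inequality fails for the dilation $Tf(x)=f(x/2)$ on $L^0(\mathbb R)$, for which $C_1=1$ while $\|T\|_{L^p\to L^p}=2^{1/p}>1$, and similarly on Orlicz spaces. I would therefore prove the homogeneous bound $\max\{C_1,\,2\frac{p_\Phi}{q_\Phi-1}C_0\}$ by Steps 1–3. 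The stated form can then be read either under the normalisation $C_1\ge1$, or as the bound for the operator normalised so that $C_1=1$ with its weak-type constant $C_0/C_1$, rescaled back afterwards.
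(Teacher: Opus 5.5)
Your route is the same as the paper's. The paper gives no separate proof; it specialises the proof of Theorem~\ref{thm:interpolation_strong_type} to $p_0=1$, $p_1=\infty$, $K=1$ after normalising $C_1=1$. Your objection for $C_1<1$ is correct. After that normalisation the weak-type constant becomes $C_0/C_1$, which the statement forgets, and your scaling argument with $f\mapsto f(\cdot/2)$ is a valid counterexample.

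\textbf{The gap is in your positive claims.}

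\emph{What your steps actually give.} Step~3 as written ($c=2C_1M$) proves $\|T\|_{L^\Phi\to L^\Phi}\le\max\{2C_1,\,2\frac{p_\Phi}{q_\Phi-1}C_0\}$, not $\max\{C_1,\,2\frac{p_\Phi}{q_\Phi-1}C_0\}$. Splitting at $\beta=\theta\alpha/C_1$ gives, by the same computation, $c=\max\{C_1/\theta,\,\frac{1}{1-\theta}\frac{p_\Phi}{q_\Phi-1}C_0\}$. Its minimum over $\theta\in(0,1)$ is $C_1+\frac{p_\Phi}{q_\Phi-1}C_0$. This is strictly larger than $\max\{C_1,\,2\frac{p_\Phi}{q_\Phi-1}C_0\}$ whenever $C_1>\frac{p_\Phi}{q_\Phi-1}C_0$.

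\emph{Where the paper's constant comes from.} The paper's normalised argument only appears to give $\max\{C_1,\dots\}$ because of a slip in the computation of $(*)$. One has $\chi_{[0,\lambda_{\alpha/2K}(x)]}(t)=\chi_{[0,2K\mu_t(x)]}(\alpha)$, so after Tonelli the inner integral must run up to $2K\mu_t(x)$, not $\mu_t(x)$. Your Step~2 keeps exactly this factor, in $\Phi(2C_1\mu_t(y))$.

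\emph{The bound you aim for is false, as is the corollary at $C_1=1$.} In particular your conclusion for $C_1\ge 1$ fails. Take $\Phi(t)=t^3$, so $2\frac{p_\Phi}{q_\Phi-1}=3$, and $\mathcal M=\mathcal N=L^\infty(0,\infty)$. Set $\widetilde Hf(x)=\chi_{(1,\infty)}(x)\frac1x\int_1^x f(y)\,dy$ and $Tf(x)=\widetilde Hf(3x)$.
For $x>1$ we have $|\widetilde Hf(x)|\le\|f\|_\infty$ and $|\widetilde Hf(x)|\le\|f\|_1/x\le\|f\|_1$. Hence:
\begin{itemize}
\item $T$ maps $L^1+L^\infty$ linearly into $L^\infty$;
\item $C_1=1$;
\item $C_0\le\frac13$, since the dilation divides distribution functions by $3$.
\end{itemize}
So both $C_1\max\{1,3C_0\}$ and $\max\{C_1,3C_0\}$ equal $1$.
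Testing $\widetilde H$ on $y^{-1/3}\chi_{(1,N)}(y)$ with $N\to\infty$ gives the sharp Hardy constant, $\|\widetilde H\|_{L^3\to L^3}\ge\frac32$. Therefore $\|T\|_{L^3\to L^3}\ge 3^{-1/3}\cdot\frac32\approx 1.04>1$.

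\emph{What you can prove.} Steps~1--2 with the optimised split establish the homogeneous estimate $\|T\|_{L^\Phi\to L^\Phi}\le C_1+\frac{p_\Phi}{q_\Phi-1}C_0$, which is consistent with this example. State and prove that estimate instead. Neither the displayed bound $C_1\max\{1,2\frac{p_\Phi}{q_\Phi-1}C_0\}$ nor $\max\{C_1,2\frac{p_\Phi}{q_\Phi-1}C_0\}$ can be proved.
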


\section{Convolution estimates}\label{sec:3}

We still let $\Phi$ be a Young function satisfying $1  < q_\Phi \leq p_\Phi < \infty$ as in the previous setting.

We apply the interpolation results from above to two particular von Neumann algebras, namely $L^\infty(\mathbb R^{2d})$ and $\mathcal L(\mathcal H)$ for a separable, infinite-dimensional Hilbert space $\mathcal H$. 

Before going to the applications we have in mind, let us first put the conventions used earlier into perspective (see also \cite{Pisier_Xu2003}). 

For $\mathcal M = L^\infty(\mathbb R^{2d})$, the space $L^0(\mathcal M)$ simply agrees with the space of (equivalence classes of) all measurable functions on $\mathbb R^{2d}$ which are bounded outside sets of finite measure. The spectral projection $e_{(\alpha, \infty)}(|f|)$ is the characteristic function $\chi_{\{ |f| > \alpha\}}$. The trace on $L^\infty(\mathbb R^{2d})$ is defined through the Lebesgue integral, such that $\lambda_\alpha(f) = \operatorname{vol}(\{ |f| > \alpha\})$, where ``vol'' means the Lebesgue-measure of the set. Therefore, $\mu_t(f) = \inf \{ \alpha > 0: ~\operatorname{vol}( \{ |f| > \alpha\}) < t\}$ is the non-increasing rearrangement of $f$. Hence, the space $L^\Phi(L^\infty(\mathbb R^{2d})) =: L^\Phi(\mathbb R^{2d})$ is the usual commutative Orlicz space and $L^\Phi_w(L^\infty(\mathbb R^{2d})) =: L^\Phi_w(\mathbb R^{2d})$ the usual commutative weak Orlicz space.

Now, we consider $\mathcal M = \mathcal L(\mathcal H)$. Writing the self-adjoint operator $|A|$ in its spectral decomposition $|A| = \int_{[0, \infty)} \lambda ~d\pi(\lambda)$ with respect to the projection-valued measure $\pi$, we obtain (using the operator trace on $\mathcal L(\mathcal H)$):
\begin{align*}
    e_{(\alpha, \infty)}(|A|) &= \int_{(\alpha, \infty)} 1~d\pi(\lambda) = \pi((\alpha, \infty)),\\
    \lambda_\alpha(A) &= \operatorname{tr}(\pi((\alpha, \infty)).
\end{align*}
Hence, $\lambda_\alpha(A) < \infty$ if and only if $\pi((\alpha, \infty))$ is a projection with finite-dimensional range. In particular, for $\lambda_\alpha(A) < \infty$ it is necessary that $|A|$ is bounded. It turns out that $\mu_t(A) = \mu_{\lfloor t\rfloor}(A)$ are simply the singular values of $A$, and the spaces $L^p(\mathcal M)$ are the $p$-Schatten ideals (cf.\ \cite[Example 4.34]{Hiai2021}).

In the following, we will consider the Orlicz spaces $L^\Phi(\mathbb R^{2d})$ and the Orlicz-Schatten ideals $S^\Phi(\mathcal H) = L^\Phi(\mathcal L(\mathcal H))$, where we let $\mathcal H = L^2(\mathbb R^d)$. As is well-known, each $A \in \mathcal L(\mathcal H)$ has a Weyl symbol $\mathrm{sym}^w(A) \in \mathcal S'(\mathbb R^{2d})$, or equivalently $A = \mathrm{op}^w(f)$ for some $f \in \mathcal S'(\mathbb R^{2d})$. We will denote by $s_p^w$ the set of all Weyl symbols of operators in $S^p(\mathcal H)$, i.e., $s_p^w = \operatorname{sym}^w(S^p(\mathcal H))$, normed by 
$$ \| f\|_{s_p^w} := \| \mathrm{op}^w(f)\|_{S^p}, $$
and more generally, by $s_\Phi^w$ the set of Weyl symbols of all operators in $S^\Phi(\mathcal H)$, normed analogously. 

As is known \cite{Toft2002, werner84}, convolution continuously maps $(p \in [1, \infty])$:
\begin{align*}
    &\ast: s_1^w \times L^p(\mathbb R^{2d}) \to s_p^w,\\
    &\ast: s_p^w \times L^1(\mathbb R^{2d}) \to s_p^w,\\
    &\ast: s_p^w \times s_1^w \to L^p(\mathbb R^{2d}).
\end{align*}
The above continuity facts, together with Corollary \ref{cor:interpolation}, yield the following:
\begin{prop}\label{prop_1}
Let $\Phi$ be a Young function satisfying $1 < q_\Phi \leq p_\Phi < \infty$. Then, convolution extends to continuous maps
    \begin{align*}
        &\ast: L^1(\mathbb R^{2d}) \times L^\Phi(\mathbb R^{2d}) \to L^\Phi(\mathbb R^{2d}),\\
    &\ast: s_1^w \times L^\Phi(\mathbb R^{2d}) \to s_\Phi^w,\\
    &\ast: s_\Phi^w \times L^1(\mathbb R^{2d}) \to s_\Phi^w,\\
    &\ast: s_\Phi^w \times s_1^w \to L^\Phi(\mathbb R^{2d}),
    \end{align*}
    with norm estimates
    \begin{align*}
        \| f \ast g\|_{L^\Phi} &\leq 2\frac{p_\Phi}{q_\Phi - 1} \| g\|_{L^\Phi} \| f\|_{L^1},\\
        \| f \ast g\|_{s_\Phi^w} &\leq 2\frac{p_\Phi}{q_\Phi - 1} \| g\|_{L^\Phi} \| f\|_{s_1^w},\\
        \| f \ast g\|_{s_\Phi^w} &\leq 2\frac{p_\Phi}{q_\Phi - 1} \| f\|_{s_\Phi^w} \| g\|_{L^1},\\
        \| f \ast g\|_{L^\Phi} &\leq 2\frac{p_\Phi}{q_\Phi - 1} \| f\|_{s_\Phi^w} \| g\|_{s_1^w}.
    \end{align*}
\end{prop}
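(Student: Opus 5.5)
The plan is to fix the "small" factor in each of the four bilinear maps and regard convolution with it as a linear operator $T$ in the remaining variable. We then feed $T$ into Corollary \ref{cor:interpolation}. We normalize the fixed factor to have norm one in $L^1(\mathbb R^{2d})$ or in $s_1^w$ respectively, so the bilinear estimate follows by homogeneity.

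The four operators and the algebras they act between are as follows:
\begin{itemize}
\item[(i)] $T_f g = f\ast g$ with $f\in L^1$, from $L^\infty(\mathbb R^{2d})$ to itself.
\item[(ii)] $T_f g = f\ast g$ with $f\in s_1^w$, from $L^\infty(\mathbb R^{2d})$ to $\mathcal L(\mathcal H)$; the output is identified with $\mathrm{op}^w(f\ast g)$.
\item[(iii)] $T_g f = f\ast g$ with $g\in L^1$, on $\mathcal L(\mathcal H)$, i.e.\ $\mathrm{op}^w(f)\mapsto \mathrm{op}^w(f\ast g)$.
\item[(iv)] $T_g f = f\ast g$ with $g\in s_1^w$, from $\mathcal L(\mathcal H)$ to $L^\infty(\mathbb R^{2d})$.
\end{itemize}
In each case the quoted continuity results with $p=1$ and $p=\infty$ show that $T$ is defined and linear on $L^1+L^\infty$ (or $S^1+S^\infty$). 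They also give strong type $(1,1)$ and $(\infty,\infty)$ with both constants at most $1$, since the Young-type inequalities for Werner's convolutions hold with constant $1$ under the standard normalization of the Weyl calculus.

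Strong type $(1,1)$ implies weak type $(1,1)$ with $C_0\le \|T\|_{L^1\to L^1}\le 1$. This follows from $\|y\|_{L^1_w}=\sup_t t\mu_t(y)\le \int_0^\infty \mu_s(y)\,ds$, because $\mu$ is nonincreasing. We also have $C_1\le 1$. Corollary \ref{cor:interpolation}, applied with the remark that the domain $L^1+L^\infty$ is allowed, then yields
\[
\|T\|_{L^\Phi\to L^\Phi}\le \max\Big\{1,\,2\tfrac{p_\Phi}{q_\Phi-1}\Big\}=2\tfrac{p_\Phi}{q_\Phi-1}.
\]
The last equality holds since $p_\Phi\ge q_\Phi>q_\Phi-1$ forces $2p_\Phi/(q_\Phi-1)>1$. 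Undoing the normalization of the fixed factor gives the four stated estimates.

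The main obstacle is the bookkeeping of the identifications, not the interpolation itself. We must check that $L^\Phi\subset L^1+L^\infty$ and $S^\Phi\subset S^1+S^\infty$, so that the maps are initially defined on the Orlicz spaces; this follows from $q_\Phi>1$ and $p_\Phi<\infty$ by splitting at level one. We must also check that the convolution "extends" consistently, meaning the definitions on $L^1$ and $L^\infty$ agree on the intersection, which holds because both come from the same distributional convolution. Finally we must confirm that the endpoint constants are exactly $1$. If the paper's Weyl normalization introduced a factor, it would simply multiply the bounds, so I would verify the endpoint estimates $\|f\ast g\|_{s_p^w}\le\|f\|_{s_1^w}\|g\|_{L^p}$ and $\|f\ast g\|_{L^p}\le \|f\|_{s_p^w}\|g\|_{s_1^w}$ in that normalization, via the trace formula $\operatorname{tr}(\mathrm{op}^w(a))=(2\pi)^{-d}\int a$ as appropriate.
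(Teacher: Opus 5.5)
Your proposal is correct and is essentially the paper's proof: fix and normalize the ``small'' factor, observe that convolution with it is of strong type $(1,1)$ and $(\infty,\infty)$ with constants at most $1$, apply Corollary~\ref{cor:interpolation}, and undo the normalization by homogeneity (the paper writes this out only for $L^1\times L^\Phi$ and calls the other three cases analogous). Your extra checks ($C_0\le 1$ via $t\mu_t(y)\le\|y\|_{L^1}$, $\max\{1,2p_\Phi/(q_\Phi-1)\}=2p_\Phi/(q_\Phi-1)$, $L^\Phi\subset L^1+L^\infty$) fill in details the paper leaves implicit, and your normalization caveat is well placed: the endpoint constants are exactly $1$ in the $e^{2\pi i\langle x,\xi\rangle}$ convention, but with $e^{i\langle x,\xi\rangle}$ and Lebesgue measure the second and fourth maps pick up powers of $2\pi$ (e.g.\ $\int W_zAW_z^*\,dz=(2\pi)^d\operatorname{tr}(A)I$), an issue the paper's proof shares.
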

\begin{proof}
We only prove the statement for the first convolution, the other proofs work analogously. 

    Let $f \in L^1(\mathbb R^{2d})$ such that $\| f\|_{L^1} = 1$. Then, the map
    \begin{align*}
        T: L^1(\mathbb R^{2d}) + L^\infty(\mathbb R^{2d}) \to L^1(\mathbb R^{2d}) + L^1(\mathbb R^{2d})
    \end{align*}
    given by $Tg = f \ast g$ is of strong types $(1, 1)$ and $(\infty, \infty)$, both with norm constants $\| f\|_{L^1} = 1$. Hence, by Corollary \ref{cor:interpolation} we have
    \begin{align*}
        \| Tg\|_{L^\Phi} = \| f \ast g\|_{L^\Phi} \leq 2\frac{p_\Phi}{q_\Phi - 1} \| g\|_{L^\Phi}.
    \end{align*}
    For arbitrary $f \in L^1(\mathbb R^{2d})$, we therefore obtain:
    \begin{equation*}
        \| f \ast g\|_{L^\Phi} = \| f\|_{L^1} \| \frac{f}{\| f\|_{L_1}} \ast g\|_{L^\Phi}\leq 2\frac{p_\Phi}{q_\Phi - 1} \| g\|_{L^\Phi} \| f\|_{L^1}. \qedhere
    \end{equation*}
\end{proof}
In order to prove our next results we need some notation from 
\cite{Harjulehto_Hasto2019}. Recall that an increasing function $\varphi: [0,\infty) \rightarrow [0,\infty]=[0, \infty) \cup \{ \infty\}$ with $\varphi(0)=0$ is called {\it weak $\Phi$-function} if 
\begin{itemize}
\item[\textup{(i)}] $\lim_{t \downarrow 0} \varphi(t)=0$ and $\lim_{t \uparrow \infty} \varphi(t)= \infty$, 
\item[\textup{(ii)}] $\textup{(aInc)}_1$ holds on $(0, \infty)$, i.e. there is $a \geq 1$ such that for all $0< s <t$
$$\frac{\varphi(s)}{s} \leq a \frac{\varphi(t)}{t}.$$ 
\end{itemize}
Adapting the notation from \cite{Harjulehto_Hasto2019} we denote by $\Phi_w$ the set of {\it weak $\Phi$-functions}. Note that for a convex function $\varphi$ as above and 
with $0< s <t$ we obtain: 
 \begin{equation}\label{GL-convexity-and-one-decreasing}
\varphi(s)= \varphi \left( \frac{s}{t} t+0 \right) \leq \frac{s}{t} \varphi(t)+ \left( 1- \frac{s}{t} \right) \varphi(0)= \frac{s}{t} \varphi(t). 
\end{equation}
Hence, convexity implies (ii) with $a=1$. The set of left-continuous increasing convex functions $\varphi$ with $\varphi(0)=0$ and (i) therefore is contained in $\Phi_w$ and will 
be denoted by $\Phi_c \subset \Phi_w$. We extend a weak $\Phi$-functions to the interval $[0, \infty]$ by $\varphi(\infty)= \infty$ and we 
call $\varphi^{-1} : [0, \infty] \rightarrow [0, \infty]$ defined by
\begin{equation*}
\varphi^{-1}(\tau):= \inf \big{\{} t \geq 0 \: : \: \varphi(t) \geq \tau\big{\}}
\end{equation*} 
its {\it left-inverse} (although it may not be the left-inverse in an algebraic sense). If $\varphi \in \Phi_c$ satisfies $\Delta_2$, then it does not attain the value $\infty$ and therefore $\varphi$ is continuous. 

We recall the following fact on complex interpolation of Orlicz spaces, see \cite[Theorem 5.5.1]{Harjulehto_Hasto2019}. We just mention that the definitions of $L^{\varphi}(\mathbb R^{2d})$ as well as $s_{\varphi}^w$ for $\varphi \in \Phi_w$ are exactly as for $\varphi$ being a Young function.
\begin{thm}\label{thm:complex_interpolation_orlicz}
   Let $\varphi_0, \varphi_1$ be weak $\Phi$-functions.  
    Further, let $0 \leq \theta \leq 1$. Then, the function $[\varphi_0, \varphi_1]_{[\theta]}$ defined by
    \begin{align*}
        [\varphi_0, \varphi_1]_{[\theta]}^{-1} = (\varphi_0^{-1})^{1-\theta} (\varphi_1^{-1})^\theta,
    \end{align*}
  is also a weak $\Phi$-function and the following statements hold true for the complex interpolation method:
    \begin{itemize}
    \item[\textup{(a)}]  $[L^{\varphi_0}(\mathbb R^{2d}), L^{\varphi_1}(\mathbb R^{2d})]_{[\theta]} = L^{[\varphi_0, \varphi_1]_{[\theta]}}(\mathbb R^{2d})$,
    \item[\textup{(b)}]  $[s_{\varphi_0}^w, s_{\varphi_1}^w]_{[\theta]} = s_{[\varphi_0, \varphi_1]_{[\theta]}}^w$.
    \end{itemize}
\end{thm}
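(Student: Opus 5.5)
Part (a) is the complex interpolation theorem for Musielak--Orlicz spaces, \cite[Theorem 5.5.1]{Harjulehto_Hasto2019}, applied to the Lebesgue measure on $\mathbb R^{2d}$ with $\Phi$-functions that do not depend on the space variable. I would therefore only check two things: that $[\varphi_0,\varphi_1]_{[\theta]}$ is again a weak $\Phi$-function, which is part of the cited statement, and that $L^{\varphi_0}(\mathbb R^{2d}), L^{\varphi_1}(\mathbb R^{2d})$ form a compatible couple inside $L^0(\mathbb R^{2d})$. The same theorem applies to any $\sigma$-finite measure space. In particular it gives the sequence-space version $[\ell^{\varphi_0},\ell^{\varphi_1}]_{[\theta]}=\ell^{[\varphi_0,\varphi_1]_{[\theta]}}$, via counting measure on $\mathbb N$, or via functions on $(0,\infty)$ that are constant on the intervals $[n,n+1)$. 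I will use this version below.

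For (b), I first transfer the problem to operators. By definition $\mathrm{op}^w$ is an isometric isomorphism $s_\varphi^w\to S^\varphi(\mathcal H)$ for every $\varphi$, and it is one and the same linear map on $\mathcal S'(\mathbb R^{2d})$ for both endpoints. Complex interpolation commutes with such simultaneous isometries of compatible couples, so it suffices to prove
$$[S^{\varphi_0},S^{\varphi_1}]_{[\theta]}=S^{\varphi_\theta},\qquad \varphi_\theta:=[\varphi_0,\varphi_1]_{[\theta]},$$
with equivalent norms.

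To do this I would reduce to sequences by a retraction argument, done for each fixed operator. Fix $A$ (in either space) with Schmidt decomposition $A=\sum_n s_n\, u_n\otimes v_n$, where $(u_n)$, $(v_n)$ are orthonormal and $s_n=\mu_n(A)$. Define
$$J(c)=\sum_n c_n\, u_n\otimes v_n,\qquad P(B)=\big(\langle Bv_n,u_n\rangle\big)_n .$$
These maps satisfy $PJ=\mathrm{id}$ and $JP A=A$.

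The map $J$ is isometric from $\ell^{\varphi}$ to $S^{\varphi}$ for every $\varphi$, since the singular values of $J(c)$ are the decreasing rearrangement of $|c_n|$.

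The map $P$ must be bounded $S^{\varphi_j}\to\ell^{\varphi_j}$ with constants independent of $A$. By Ky Fan's inequality, $(|\langle Bv_n,u_n\rangle|)_n$ is weakly submajorized by $(\mu_n(B))_n$. If $\varphi$ is convex, then $\sum_n\varphi(|c_n|)\le\sum_n\varphi(\mu_n(B))$ whenever $c\prec_w \mu(B)$, which gives $\|P B\|_{\ell^\varphi}\le\|B\|_{S^\varphi}$. For a general weak $\Phi$-function, I would use that $\mathrm{(aInc)}_1$ makes $\varphi$ equivalent to a convex $\Phi$-function $\tilde\varphi$, i.e.\ $\tilde\varphi(t/a)\le\varphi(t)\le\tilde\varphi(t)$ up to constants, as in \cite{Harjulehto_Hasto2019}. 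This yields $\|PB\|_{\ell^\varphi}\le c_\varphi\|B\|_{S^\varphi}$ with $c_\varphi$ depending only on $\varphi$.

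Since both $J$ and $P$ are bounded at both endpoints, the interpolation property together with the sequence-space version of (a) gives
$$\|A\|_{[\theta]}=\|JPA\|_{[\theta]}\le\|PA\|_{[\ell^{\varphi_0},\ell^{\varphi_1}]_{[\theta]}}\lesssim\|(s_n)\|_{\ell^{\varphi_\theta}}=\|A\|_{S^{\varphi_\theta}}$$
and
$$\|A\|_{S^{\varphi_\theta}}=\|(s_n)\|_{\ell^{\varphi_\theta}}\lesssim\|PA\|_{[\theta]}\le c\,\|A\|_{[\theta]} .$$
All constants are uniform in $A$, so the two spaces coincide with equivalent norms.

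I expect the main obstacle to be the uniform boundedness of the diagonal map $P$ on $S^\varphi$ for a merely weak (non-convex) $\Phi$-function. The submajorization argument needs convexity, so I would first pass to an equivalent convex function and keep track of the equivalence constants. A second technical point is that the Schmidt decomposition depends on $A$. This is harmless, because the retraction argument is applied separately to each $A$ and only uses bounds that do not depend on the chosen orthonormal systems.
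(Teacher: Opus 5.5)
Your proposal is correct and follows the paper's route. Part (a), including its sequence-space version, is quoted from \cite[Theorem 5.5.1]{Harjulehto_Hasto2019}; part (b) is reduced via the isometry $\mathrm{op}^w$ to Orlicz--Schatten ideals and then to $\ell^{\varphi}$, and the only difference is that you actually prove this last transference step (the Schmidt-decomposition retraction $J$, $P$, with the Ky Fan submajorization bound for the diagonal map), where the paper simply cites \cite[Theorem A.2.3]{Simon1976}.

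One small point to add: the Schmidt decomposition needs $A$ to be compact. This holds for all operators involved as soon as one $\varphi_j$ is positive on $(0,\infty)$; for $A\in[S^{\varphi_0},S^{\varphi_1}]_{[\theta]}$ use the density of $S^{\varphi_0}\cap S^{\varphi_1}$ and the continuous embedding into $\mathcal L(\mathcal H)$. If instead both $\varphi_j$ vanish near $0$, all three spaces coincide with $\mathcal L(\mathcal H)$ up to equivalent norms, and (b) is trivial.
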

\begin{proof}
    The analogous interpolation result is certainly true more generally for complex interpolation between non-commutative Orlicz spaces $L^{\Phi_j}(\mathcal M)$, but we could not locate a suitable reference. We do not want to discuss the full proof here, but only give some references suitable for the case we want to apply later.

    For Orlicz spaces over commutative von Neumann algebras, such as $L^{\Phi}(\mathbb R^{2d})$ or $\ell^\phi(\mathbb Z)$ (the latter being $L^\Phi(\mathcal M)$ with $\mathcal M = \ell^\infty(\mathbb Z)$), the result can be found in \cite[Theorem 5.5.1]{Harjulehto_Hasto2019}. For Orlicz-Schatten ideals, i.e., $L^\Phi(\mathcal M)$ with $\mathcal M = \mathcal L(\mathcal H)$, $\mathcal H$ a Hilbert space, the statement follows from the one for $\ell^\Phi(\mathbb Z)$ by \cite[Theorem A.2.3]{Simon1976}. Since the spaces $s_\Phi^w$ are, by definition, isometrically isomorphic to the Orlicz-Schatten ideals $L^\Phi(\mathcal L(\mathcal H))$, the statement follows.
\end{proof}
\begin{rem}
    Clearly, for two weak $\Phi$-functions $\varphi_0, \varphi_1$ and $\theta \in [0, 1]$ we have the following identity, which we will make use of:
    \begin{align*}
        [\varphi_0, \varphi_1]_{[\theta]} = [\varphi_1, \varphi_0]_{[1-\theta]}.
    \end{align*}
\end{rem}
Note that the space $L^1(\mathbb R^{2d})$, respectively $s_1^w$, corresponds to the Young function $\Phi_{[1]}(t) = t$. Hence, for $\theta \in (0, 1)$ 
and $\Phi \in \Phi_w$ we have the following special case of the interpolated weak $\Phi$-function:
\begin{align}
    [\Phi, \Phi_{[1]}]_{[\theta]}^{-1}(s) = \Phi^{-1}(s)^{1-\theta} s^\theta.
\end{align}
When $\Phi_1, \Phi_2$ are Young functions satisfying $1 < q_{\Phi_j} \leq p_{\Phi_j} < \infty$, $j = 1, 2$, then so is $[\Phi_1, \Phi_2]_{[\theta]}$. 

We will make use of a more general approach to interpolation of Orlicz spaces, which consists of interpolating between $n$-tuples of Orlicz spaces. We will not refer to a general interpolation functor, but only make ad-hoc definitions in the setting of Orlicz spaces. The following approach iterates the usual complex interpolation method used above. Nevertheless, we are confident that this actually agrees with the complex interpolation spaces of $n$-tuples introduced by Favini and Lions, cf.~\cite[Section 1.(ii)]{Cwikel_Janson1987} and references therein. Proving that the spaces we will use below agree with the complex interpolation spaces of Favini and Lions would boil down to proving a reiteration theorem for the Favini-Lions interpolation method, which so far seems not to be available and is not within the scope of the current work.

In the following, we will denote the standard-$n$-symplex by $S_n$:
\begin{align*}
    S_n := \{ \Theta = (\theta_1, \dots, \theta_n) \in \mathbb R^n; ~0 \leq \theta_j \leq 1, ~j = 1, \dots n \text{ and } \sum_{j=1}^n \theta_j = 1\}.
\end{align*}
We will also write $\overset{\circ}{S}_n$ for the interior of $S_n$; i.e., the set of all tuples in $(\theta_1, \dots, \theta_n) \in S_n$ with $\theta_j \in (0, 1)$ for all $j = 1, \dots, n$.
\begin{defn}\label{def:Youngfunction_higher_interpolation}
    Let $\Phi_1, \dots, \Phi_n$ be weak $\Phi$-functions and $\Theta = (\theta_1, \dots, \theta_n) \in S_n$. Then, we define the function $[\Phi_1, \dots, \Phi_n]_{\Theta}$ by:
    \begin{align*}
        [\Phi_1, \dots, \Phi_n]_{\Theta}^{-1} = (\Phi_1^{-1})^{\theta_1} (\Phi_2^{-1})^{\theta_2} \cdots (\Phi_n^{-1})^{\theta_n}.
    \end{align*}
\end{defn}
Using this notation, we obtain:
\begin{thm}\label{prop:iterated_interpolation}
    Let $\Phi_1, \dots, \Phi_n$ be Young functions satisfying $1 < q_{\Phi_j} \leq p_{\Phi_j} < \infty$ for all $j$, $\Theta  = (\theta_1, \dots, \theta_n) \in \overset{\circ}{S}_n$ and $0 \leq k \leq n$. Then, $(n-1)$-times repeated convolution acts as a multi-linear continuous operator:
    \begin{align*}
         &s_{[\Phi_{[1]},\Phi_1]_{[\theta_1]}}^w \times \dots \times s_{[\Phi_{[1]},\Phi_k]_{[\theta_k]}}^w \times L^{[\Phi_{[1]},\Phi_{k+1}]_{[\theta_{k+1}]}}(\mathbb R^{2d}) \times \dots \times L^{[\Phi_{[1]},\Phi_n]_{[\theta_n]}}(\mathbb R^{2d}) \\
         &\quad \to \begin{cases}
            s_{[\Phi_1, \dots, \Phi_n]_\Theta}^w, &\quad k \text{ odd},\\
            L^{[\Phi_1, \dots, \Phi_n]_\Theta}(\mathbb R^{2d}), &\quad k \text{ even}.
        \end{cases}
    \end{align*}
\end{thm}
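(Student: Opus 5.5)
The plan is multilinear complex interpolation, organized as an induction on $n$. For $n=1$ there is nothing to prove. For $n\ge 2$ we first establish the ``vertex'' estimates. For each fixed $j\in\{1,\dots,n\}$, put the $j$-th factor in its full Orlicz space, i.e.\ $s^w_{\Phi_j}$ if $j\le k$ and $L^{\Phi_j}(\mathbb R^{2d})$ if $j>k$. Put every other factor in the endpoint space, $s_1^w$ for indices $\le k$ and $L^1(\mathbb R^{2d})$ for indices $>k$. We claim that the $(n-1)$-fold convolution then maps into $s^w_{\Phi_j}$ if $k$ is odd and into $L^{\Phi_j}(\mathbb R^{2d})$ if $k$ is even.

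The vertex claim follows by repeatedly applying Proposition \ref{prop_1}, together with the classical facts $L^1\ast L^1\subset L^1$, $s_1^w\ast L^1\subset s_1^w$ and $s_1^w\ast s_1^w\subset L^1$. Convolution is commutative and associative, so we may first convolve all the endpoint factors and then convolve the result with the $j$-th factor. Each convolution of two $s$-type factors produces an $L$-type function, while a convolution involving an $L$-type factor preserves the type of the other factor. Hence the type of the output is determined by the parity of $k$.

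Next we interpolate, inducting on $n$. Write $\Theta'=(\theta_1,\dots,\theta_{n-1})/(1-\theta_n)\in\overset{\circ}{S}_{n-1}$. By the induction hypothesis, applied with the $n$-th factor held in $L^1$ or $s_1^w$ and using the same associativity bookkeeping as above, the $(n-1)$-fold convolution maps
\[
\prod_{j<n} X_{[\Phi_{[1]},\Phi_j]_{[\theta_j/(1-\theta_n)]}}\times X_{\Phi_{[1]}} \to Y_{[\Phi_1,\dots,\Phi_{n-1}]_{\Theta'}},
\]
where $X$ and $Y$ stand for $s^w$ or $L$ as dictated by $k$. The vertex estimate for $j=n$ gives the mapping $\prod_{j<n}X_{\Phi_{[1]}}\times X_{\Phi_n}\to Y_{\Phi_n}$.

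We apply the multilinear complex interpolation theorem (Calderón; Bergh--Löfström, Thm.\ 4.4.1) to these two estimates with parameter $\theta_n$. Theorem \ref{thm:complex_interpolation_orlicz} identifies each interpolated space as an Orlicz (Orlicz--Schatten) space. It remains to check the Young functions via their inverses. For $j<n$ the $j$-th slot becomes
\[
\big(s^{1-\theta_j/(1-\theta_n)}\Phi_j^{-1}(s)^{\theta_j/(1-\theta_n)}\big)^{1-\theta_n}s^{\theta_n}=s^{1-\theta_j}\Phi_j^{-1}(s)^{\theta_j},
\]
which is $[\Phi_{[1]},\Phi_j]_{[\theta_j]}^{-1}$, using the remark $[\varphi_0,\varphi_1]_{[\theta]}=[\varphi_1,\varphi_0]_{[1-\theta]}$. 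The $n$-th slot is $[\Phi_{[1]},\Phi_n]_{[\theta_n]}$. The target has inverse $\big(\prod_{j<n}(\Phi_j^{-1})^{\theta_j/(1-\theta_n)}\big)^{1-\theta_n}(\Phi_n^{-1})^{\theta_n}=[\Phi_1,\dots,\Phi_n]_\Theta^{-1}$. This closes the induction.

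I expect the main obstacle to be the technical hypotheses of multilinear complex interpolation rather than the exponent bookkeeping. The operator must be well defined on the intersections $X_{\Phi_{[1]}}\cap X_{\Phi_j}$, and the extension must be continuous from a dense subspace. I would handle this by working on Schwartz functions, respectively on Weyl symbols of finite-rank operators with Schwartz kernels. These are dense in every space involved, because all the intermediate functions satisfy $\Delta_2$: the $\Phi_j$ do by assumption $p_{\Phi_j}<\infty$, and $\Delta_2$ passes to the geometric means of the inverses. The operator then extends by continuity. A second point is that the induction hypothesis is used with intermediate weak $\Phi$-functions rather than the original $\Phi_j$. This is harmless, because the induction only invokes the interpolation identity of Theorem \ref{thm:complex_interpolation_orlicz}, which holds for weak $\Phi$-functions, and never re-applies Proposition \ref{prop_1} to the intermediate functions.
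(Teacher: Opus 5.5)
Your exponent bookkeeping is correct, and the strategy is the paper's: activate one $\Phi_j$ at a time by binary complex interpolation against a vertex estimate, and identify the spaces via Theorem \ref{thm:complex_interpolation_orlicz}. Your normalization $\theta'_j=\theta_j/(1-\theta_n)$ neatly replaces the paper's parameters $\widetilde\theta_j$ and its backward induction. However, inducting on the number of factors creates a step you do not justify, and your last paragraph wrongly says it is not needed.

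The induction hypothesis only concerns $n-1$ factors. It tells you that the $(n-2)$-fold convolution lands in $s_\varphi^w$ or $L^\varphi(\mathbb R^{2d})$, where $\varphi=[\Phi_1,\dots,\Phi_{n-1}]_{\Theta'}$. To get your first endpoint estimate for the $n$-factor map, you must convolve once more with the $n$-th factor in $L^1(\mathbb R^{2d})$ or $s_1^w$. Depending on $k$, this requires one of $L^1\ast L^\varphi\subset L^\varphi$, $L^1\ast s^w_\varphi\subset s^w_\varphi$, $s_1^w\ast L^\varphi\subset s^w_\varphi$ or $s_1^w\ast s^w_\varphi\subset L^\varphi$. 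That is Proposition \ref{prop_1} for the \emph{intermediate} function $\varphi$, which neither the induction hypothesis nor Theorem \ref{thm:complex_interpolation_orlicz} supplies. Also, since $\overset{\circ}{S}_1=\emptyset$, your $n=2$ step really uses the case $\Theta'=(1)$. So the base case should be the identity map with $\Theta=(1)\in S_1$.

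The gap closes in either of two ways. First, you can verify that $\varphi$ meets the hypotheses of Proposition \ref{prop_1}.
\begin{itemize}
\item Since $p_{\Phi_j}<\infty$, each $\Phi_j$ is an increasing convex bijection of $[0,\infty)$.
\item Hence $\varphi^{-1}=\prod_{j<n}(\Phi_j^{-1})^{\theta'_j}$ is a weighted geometric mean of concave increasing bijections. It is therefore concave, and $\varphi$ is a Young function.
\item $\Phi_j(t)/t^{a}$ is nondecreasing (nonincreasing) exactly when $\Phi_j^{-1}(s)/s^{1/a}$ is nonincreasing (nondecreasing). This property is inherited by weighted geometric means.
\item This yields $q_\varphi\ge\big(\sum_{j<n}\theta'_j/q_{\Phi_j}\big)^{-1}>1$ and $p_\varphi\le\big(\sum_{j<n}\theta'_j/p_{\Phi_j}\big)^{-1}<\infty$.
\end{itemize}

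Second, you can do what the paper does and never drop factors. Set $\sigma_m=\theta_1+\dots+\theta_m$. Show by induction on $m$ that the full $n$-linear map sends $\prod_{j\le m}X_{[\Phi_{[1]},\Phi_j]_{[\theta_j/\sigma_m]}}\times\prod_{j>m}X_{\Phi_{[1]}}$ into $Y_{[\Phi_1,\dots,\Phi_m]_{(\theta_1,\dots,\theta_m)/\sigma_m}}$. At each step, interpolate with the vertex estimate for $j=m+1$ at parameter $\theta_{m+1}/\sigma_{m+1}$, which is the paper's $\widetilde\theta_m$. The not-yet-activated factors then lie in $L^1$ or $s_1^w$ in both endpoint estimates and stay there. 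So only Proposition \ref{prop_1} for the original $\Phi_j$ is ever used.
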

\begin{proof}
 By Proposition \ref{prop_1}, we know that $(n-1)$-times repeated convolution acts as a continuous multi-linear operator (where there are $k$ factors of Schatten-Orlicz ideals and $n-k$ factors of Orlicz-Lebesgue spaces):
    \begin{align}\addtocounter{equation}{1}
        s_{\Phi_1}^w &\times s_1^w \times \dots \times s_1^w \times L^1(\mathbb R^{2d}) \times \dots \times L^1 (\mathbb R^{2d}) \to \begin{cases}
            s_{\Phi_1}^w, &\quad k \text{ odd},\\
            L^{\Phi_1}(\mathbb R^{2d}),&\quad k\text{ even},
        \end{cases} \tag{\arabic{equation}.1}\label{eq3.1}\\
        s_{1}^w &\times s_{\Phi_2}^w \times \dots \times s_1^w \times L^1(\mathbb R^{2d}) \times \dots \times L^1(\mathbb R^{2d}) \to \begin{cases}
            s_{\Phi_2}^w, &\quad k \text{ odd},\\
            L^{\Phi_2}(\mathbb R^{2d}),&\quad k\text{ even},
        \end{cases} \tag{\arabic{equation}.2}\label{eq3.2}\\
        &\dots \notag\\
        s_{1}^w &\times s_1^w \times \dots \times s_1^w \times L^1(\mathbb R^{2d}) \times \dots \times L^{\Phi_n}(\mathbb R^{2d})\to \begin{cases}
            s_{\Phi_n}^w, &\quad k \text{ odd},\\
            L^{\Phi_n}(\mathbb R^{2d}),&\quad k\text{ even}.
        \end{cases}\tag{\arabic{equation}.n}\label{eq3.n}
    \end{align}
For simplicity of notation, let us assume in the following that $k = 0$. All the steps involved are exactly identical for general $0 \leq k \leq n$, but the notations get more involved. Furthermore, for simplyfing the notation, we will write $L^\Phi$ instead for $L^\Phi(\mathbb R^{2d})$ for the Orlicz spaces on $\mathbb R^{2d}$ for the remainder of this proof.

We first perform a multi-linear complex interpolation between \eqref{eq3.1} and \eqref{eq3.2} with respect to the parameter $\widetilde{\theta}_1 = \frac{\theta_2}{\theta_1 + \theta_2}$ to obtain the following continuous and multi-linear mapping property of $(n-1)$-times iterated convolution:
\begin{align}
    L^{[\Phi_1, \Phi_{[1]}]_{[\widetilde{\theta}_1]}} \times L^{[\Phi_{[1]}, \Phi_2]_{[\widetilde{\theta}_1]}} \times L^1 \times \dots \times L^1
    \to & L^{[\Phi_1, \Phi_2]_{[\widetilde{\theta}_1]}} .\tag{\arabic{equation}.2'}\label{eq3.2mod}
\end{align}
Next, we perform a multi-linear complex interpolation between Eqs.~\eqref{eq3.2mod} and ($\arabic{equation}.3$) with respect to the parameter $\widetilde{\theta_2} = \frac{\theta_3}{\theta_1 + \theta_2 + \theta_3}$ to obtain (where we also changed the order of interpolation in the first factor of \eqref{eq3.2mod} for convenience):
\begin{align*}
    &L^{[[\Phi_{[1]}, \Phi_1]_{[1-\widetilde{\theta}_1]}, \Phi_{[1]}]_{[\widetilde{\theta}_2]}} \times L^{[[\Phi_{[1]}, \Phi_2]_{[\widetilde{\theta}_1]}, \Phi_{[1]}]_{\widetilde{\theta}_2}} \times L^{[\Phi_{[1]}, \Phi_3]_{[\widetilde{\theta}_2]}} \times L^1 \times \dots \times L^1  \\
    &\quad \to L^{[[\Phi_1, \Phi_2]_{[\widetilde{\theta}_1]}, \Phi_3]_{[\widetilde{\theta}_2]}}.
\end{align*}
We observe that in general, for any Young function $\Phi$ and any $\rho, \nu \in (0, 1)$:
\begin{align*}
    [[\Phi_{[1]}, \Phi]_{[\rho]}, \Phi_{[1]}]_{[\nu]}^{-1}(s) &= ([\Phi_{[1]}, \Phi]_{[\rho]}^{-1})(s)^{1-\nu} s^\nu\\
    &= s^{(1-\rho)(1-\nu)} \Phi^{-1}(s)^{\rho(1-\nu)}s^\nu
\end{align*}
such that
\begin{align}\label{formula:iterated_interpolation1}
    [[\Phi_{[1]}, \Phi]_{[\rho]}, \Phi_{[1]}]_{[\nu]} = [\Phi_{[1]}, \Phi]_{[\rho(1-\nu)]}.
\end{align}
Further, (using the notation from Def.~\ref{def:Youngfunction_higher_interpolation}):
\begin{align*}
    [[\Phi_1, \Phi_2]_{[\widetilde{\theta}_1]}, \Phi_3]_{[\widetilde{\theta}_2]}^{-1} &= ([\Phi_1, \Phi_2]_{[\widetilde{\theta}_1]}^{-1})^{1-\widetilde{\theta}_2} (\Phi_3^{-1})^{\widetilde{\theta}_2}\\
    &= (\Phi_1^{-1})^{(1-\widetilde{\theta}_1)(1-\widetilde{\theta}_2)} (\Phi_2^{-1})^{\widetilde{\theta}_1(1-\widetilde{\theta}_2)}(\Phi_3^{-1})^{\widetilde{\theta}_2}\\
    &= [\Phi_1, \Phi_2, \Phi_3]_{[((1-\widetilde{\theta}_1)(1-\widetilde{\theta}_2), \widetilde{\theta}_1(1-\widetilde{\theta}_2), \widetilde{\theta}_2)]}^{-1}.
\end{align*}
Using these formulas, we obtain:
\begin{align*}
    &L^{[\Phi_{[1]}, \Phi_1]_{[(1-\widetilde{\theta}_1)(1-\widetilde{\theta}_2)]}} \times L^{[\Phi_{[1]}, \Phi_2]_{[\widetilde{\theta}_1(1- \widetilde{\theta}_2)]}} \times L^{[\Phi_{[1]}, \Phi_3]_{[\widetilde{\theta}_2]}} \times L_1 \times \dots \times L_1 \\
    &\quad \to L^{[\Phi_1, \Phi_2, \Phi_3]_{[((1-\widetilde{\theta}_1)(1-\widetilde{\theta}_2), \widetilde{\theta}_1(1-\widetilde{\theta}_2), \widetilde{\theta}_2)]}}.
\end{align*}
By induction, one now easily proves that for any $2 \leq j \leq n-1$ we have the following mapping property of $(n-1)$-fold iterated convolution:
\begin{align*}
    &L^{[\Phi_{[1]}, \Phi_1]_{[(1-\widetilde{\theta}_1)\cdot \dots \cdot (1-\widetilde{\theta}_j)]}} \times L^{[\Phi_{[1]}, \Phi_2]_{[\widetilde{\theta}_1\cdot (1-\widetilde{\theta}_2) \dot \dots \cdot (1-\widetilde{\theta}_j)]}} \times L^{[\Phi_{[1]}, \Phi_3]_{[\widetilde{\theta}_2\cdot (1-\widetilde{\theta}_3) \dots \cdot (1-\widetilde{\theta}_j)]}} \\
    &\times \dots \times L^{[\Phi_{[1]}, \Phi_{j+1}]_{[\widetilde{\theta}_{j-1}]}} \times L^1 \times \dots \times L^1 \\
    &\quad \to L^{[\Phi_1, \Phi_2, \dots, \Phi_{j+1}]_{[((1-\widetilde{\theta}_1)\cdot \dots \cdot (1-\widetilde{\theta}_j), \widetilde{\theta}_1(1-\widetilde{\theta}_2)\cdot \dots \cdot (1-\widetilde{\theta}_j), \widetilde{\theta}_2(1-\widetilde{\theta}_3) \cdot \dots \cdot (1-\widetilde{\theta}_j), \dots, \widetilde{\theta}_j)]}}.
\end{align*}
Here, we let for $2 \leq j \leq n-1$: $\widetilde{\theta}_j = \frac{\theta_{j+1}}{\theta_1 + \theta_2 + \dots + \theta_{j+1}}$. In particular, for $j = n-1$ we see that:
\begin{align*}
    &L^{[\Phi_{[1]}, \Phi_1]_{[(1-\widetilde{\theta}_1)\cdot \dots \cdot (1-\widetilde{\theta}_{n-1})]}} \times L^{[\Phi_{[1]}, \Phi_2]_{[\widetilde{\theta}_1\cdot (1-\widetilde{\theta}_2) \dot \dots \cdot (1-\widetilde{\theta}_{n-1})]}} \times L^{[\Phi_{[1]}, \Phi_3]_{[\widetilde{\theta}_2\cdot (1-\widetilde{\theta}_3) \dots \cdot (1-\widetilde{\theta}_{n-1})]}} \\
    &\times \dots \times L^{[\Phi_{[1]}, \Phi_{n}]_{[\widetilde{\theta}_{j-1}]}}\\
    &\quad \to L^{[\Phi_1, \Phi_2, \dots, \Phi_n]_{[((1-\widetilde{\theta}_1)\cdot \dots \cdot (1-\widetilde{\theta}_{n-1}), \widetilde{\theta}_1(1-\widetilde{\theta}_2)\cdot \dots \cdot (1-\widetilde{\theta}_{n-1}), \widetilde{\theta}_2(1-\widetilde{\theta}_3) \cdot \dots \cdot (1-\widetilde{\theta}_{n-1}), \dots, \widetilde{\theta}_{n-1})]}}.
\end{align*}
We will now simplify the expressions involving the $\widetilde{\theta}_j$ by backwards induction. By definition, $\widetilde{\theta}_{n-1} = \theta_n$ such that $(1-\widetilde{\theta}_{n-1}) = (1-\theta_n)$. Therefore, 
\begin{align*}
    \widetilde{\theta}_{n-2}(1-\widetilde{\theta}_{n-1}) &=  \frac{\theta_{n-1}}{1-\theta_n}(1-\theta_n) = \theta_{n-1},\\
    (1-\widetilde{\theta}_{n-2})(1-\widetilde{\theta}_{n-1}) &= (1 - \frac{\theta_{n-1}}{1-\theta_n})(1-\theta_n) = \frac{1 - \theta_{n-1} - \theta_{n}}{1-\theta_n} \cdot (1-\theta_n) \\
    &= 1 - \theta_{n-1} - \theta_n.
\end{align*}
By backwards induction, one now easily proves for $m = 1, \dots, n-1$:
\begin{align*}
    \widetilde{\theta}_m (1-\widetilde{\theta}_{m+1}) \cdot \dots \cdot (1-\widetilde{\theta}_{n-1}) = \theta_{m+1},\\
    (1-\widetilde{\theta}_m) (1-\widetilde{\theta}_{m+1}) \cdot \dots \cdot (1-\widetilde{\theta}_{n-1}) = 1 - \theta_{m+1} - \dots - \theta_n.
\end{align*}
In particular, we see that the final mapping property for $(n-1)$-times iterated convolution can be written as
\begin{align*}
    &L_{[\Phi_{[1]}, \Phi_1]^{[\theta_1]}} \times L^{[\Phi_{[1]}, \Phi_2]_{[\theta_2]}} \times \dots \times L^{[\Phi_{[1]}, \Phi_{n}]_{[\theta_m]}}\to L^{[\Phi_1, \dots, \Phi_n]_{[\Theta]}},
\end{align*}
which finishes the proof.
\end{proof}
We add the following technical result, which will allow us to derive a particularly useful instance of the above result.
\begin{prop}\label{Prop-construction-Phi-functions}
Let $\psi_0, \psi_1, \dots, \psi_n$ be Young functions such that  with the notation analogous to \eqref{GL_equivalent_Characterization_exponent} and \eqref{GL_equivalent_Characterization_exponent2} $1 < q_{\psi_j} \leq p_{\psi_j} < \infty$ for every $j = 1, \dots, n$ and for all $s>0$ they satisfy the relation
\begin{equation}\label{GL-IYF}
    s^{n-1}\psi_0^{-1}(s) = \psi_1^{-1}(s) \cdot \dots \cdot \psi_n^{-1}(s). 
\end{equation}
Further, we assume that 
\begin{equation}\label{condition}
n-1 < \sum_{j=1}^n \frac{1}{p_{\psi_j}}. 
\end{equation}
Then there is $\Theta \in \overset{\circ}{S}_n$ and there are weak $\Phi$-functions $\Phi_1, \dots \Phi_n \in \Phi_w$ satisfying $1 < q_{\Phi_j} \leq p_{\Phi_j}< \infty$ and solving the equations: 
\begin{align}\addtocounter{equation}{1}
\psi_0
&= [\Phi_1, \dots, \Phi_n]_{[\Theta]}.\tag{\arabic{equation}.0}\label{IYF-1}\\
\psi_1
&= [\Phi_{[1]}, \Phi_1]_{[\theta_1]},\tag{\arabic{equation}.1}\label{IYF-2}\\
\dots \notag\\
\psi_n
&= [\Phi_{[1]}, \Phi_n]_{[\theta_n]},\tag{\arabic{equation}.n}\label{IYF-3}
\end{align}
\end{prop}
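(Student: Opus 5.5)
The plan is to solve the equations \eqref{IYF-2}--\eqref{IYF-3} explicitly for $\Phi_j$ once $\Theta$ is fixed, and then to check that \eqref{IYF-1} follows automatically from \eqref{GL-IYF}. By Definition \ref{def:Youngfunction_higher_interpolation}, equation \eqref{IYF-2} for index $j$ reads $\psi_j^{-1}(s) = s^{1-\theta_j}\Phi_j^{-1}(s)^{\theta_j}$. I will therefore define the increasing function
\begin{equation*}
g_j(s) := \bigl(s^{\theta_j - 1}\psi_j^{-1}(s)\bigr)^{1/\theta_j}, \qquad s \geq 0,
\end{equation*}
and let $\Phi_j$ be its inverse, so that $\Phi_j^{-1} = g_j$. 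Then
\begin{equation*}
\prod_{j=1}^n (\Phi_j^{-1}(s))^{\theta_j} = s^{\sum_j(\theta_j-1)}\prod_{j=1}^n\psi_j^{-1}(s) = s^{1-n}\prod_{j=1}^n \psi_j^{-1}(s) = \psi_0^{-1}(s)
\end{equation*}
by \eqref{GL-IYF} and $\sum_j \theta_j = 1$. This gives \eqref{IYF-1}. All the work thus lies in choosing $\Theta$ so that each $\Phi_j$ is a genuine weak $\Phi$-function with $1<q_{\Phi_j}\le p_{\Phi_j}<\infty$.

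To choose $\Theta$, I first use the power case as a heuristic. If $\psi_j(t) = t^r$, then $\Phi_j(t) = t^{r'}$ with $1/r' = 1-(1-1/r)/\theta_j$. Here $1/r' < 1$ always holds, and $1/r' > 0$ holds exactly when $\theta_j > 1 - 1/r$. Since $r$ ranges over $[q_{\psi_j}, p_{\psi_j}]$, I will require $\theta_j > 1 - 1/p_{\psi_j}$. Such $\theta_j$ with $\sum_j\theta_j = 1$ exist precisely because $\sum_j (1-1/p_{\psi_j}) < 1$, which is \eqref{condition}. Concretely, I distribute the positive slack $1-\sum_j(1-1/p_{\psi_j})$ equally among the $\theta_j$. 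All $\theta_j$ are then positive, and since $n\ge 2$ each is also $<1$, so $\Theta\in \overset{\circ}{S}_n$. (For $n=1$ the statement is trivial with $\Phi_1 = \psi_1$ and $\theta_1 = 1$, modulo the interior convention.)

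Next I make the heuristic rigorous via the monotonicity characterizations \eqref{GL_equivalent_Characterization_exponent}--\eqref{GL_equivalent_Characterization_exponent2}. Since $p_{\psi_j}<\infty$, $\psi_j$ satisfies $\Delta_2$ and is continuous. Since $q_{\psi_j}>1$, it is strictly increasing, and therefore $\psi_j^{-1}$ is a true inverse. A short argument shows that $\psi(t)/t^{p}$ nonincreasing is equivalent to $\psi^{-1}(s)/s^{1/p}$ nondecreasing, and likewise for the $q$-side with the monotonicities reversed. Then
\begin{equation*}
\frac{g_j(s)}{s^{a_j}} = \Bigl(\frac{\psi_j^{-1}(s)}{s^{1/p_{\psi_j}}}\Bigr)^{1/\theta_j}, \qquad a_j := 1-\frac{1-1/p_{\psi_j}}{\theta_j}\in(0,1),
\end{equation*}
is nondecreasing. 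Similarly, $g_j(s)/s^{b_j}$ is nonincreasing with $b_j := 1-(1-1/q_{\psi_j})/\theta_j < 1$. Transferring back gives that $\Phi_j(t)/t^{1/a_j}$ is nonincreasing and $\Phi_j(t)/t^{1/b_j}$ is nondecreasing. Hence $p_{\Phi_j}\le 1/a_j<\infty$ and $q_{\Phi_j}\ge 1/b_j>1$. If $b_j \le 0$, one instead uses any exponent bigger than $1$ for which monotonicity holds.

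Finally, I check the axioms of $\Phi_w$. The function $\Phi_j$ is increasing, vanishes at $0$, and has the right limits at $0$ and $\infty$ because $g_j$ is a continuous increasing bijection of $[0,\infty)$. Property $\textup{(aInc)}_1$ with $a=1$ follows from $\Phi_j(t)/t$ being nondecreasing, which is a consequence of $q_{\Phi_j}>1$. I expect the main obstacle to be the careful bookkeeping between monotonicity of $\psi$ and of its (left-)inverse, and ensuring $g_j$ is a bijection so that the left-inverse formalism of Theorem \ref{thm:complex_interpolation_orlicz} matches. For $q_{\Phi}$ and $p_\Phi$, defined through right derivatives, I will rely on the equivalent characterizations rather than differentiate directly.
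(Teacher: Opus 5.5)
Your proposal is correct and follows essentially the paper's route. You solve \eqref{IYF-2}--\eqref{IYF-3} explicitly for $\Phi_j^{-1}$, verify \eqref{IYF-1} from \eqref{GL-IYF} and $\sum_j\theta_j=1$, choose $\theta_j>1-1/p_{\psi_j}$ via \eqref{condition}, and transfer the monotonicity of $\psi_j(t)/t^{p_{\psi_j}}$ and $\psi_j(t)/t^{q_{\psi_j}}$ through the inverse to get $p_{\Phi_j}\le 1/a_j<\infty$ and $q_{\Phi_j}\ge 1/b_j>1$; the paper's conditions on $\alpha_j$ and $\beta_j$ are exactly $\alpha_j>1/a_j$ and $\beta_j<1/b_j$. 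The remaining differences are cosmetic: your caveat for $b_j\le 0$ is vacuous because $q_{\psi_j}\le p_{\psi_j}$ forces $b_j\ge a_j>0$, and you get $\textup{(aInc)}_1$ directly from $\Phi_j(t)/t^{1/b_j}$ being nondecreasing rather than, as the paper does, from $\Phi_j^{-1}(s)/s$ being decreasing together with Prop.~2.3.7 of Harjulehto--H\"ast\"o.
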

\begin{proof}
For fixed value $\Theta \in \overset{\circ}{S}_n$ we solve (\ref{IYF-2}) to (\ref{IYF-3}) to obtain functions $\Phi_1^{-1}, \dots, \Phi_n^{-1}$ that then also solve (\ref{IYF-1}): 
\begin{equation*}
\Phi_j^{-1}(\tau)= \frac{\big{(} \psi_j^{-1}(\tau)\big{)}^{\frac{1}{\theta_j}}}{\tau^{\frac{1-\theta_j}{\theta_j}}}. 
\end{equation*}
Since $\psi_j\in \Phi_c$ satisfies $p_{\psi_j} < \infty$, and hence $\Delta_2$, it cannot attain the value $\infty$ and, by convexity, it is a continuous surjection. 
Hence for any $\tau \in (0, \infty)$ there is $s\in (0, \infty)$ with $\tau=\psi_j(s)$. 
Note that $\Phi_j$ is increasing if and only if 
\begin{equation*}
\frac{\psi_j^{-1}(\tau)}{\tau^{1-\theta_j}}= \left( \frac{\psi_j(s)}{s^{\frac{1}{1-\theta_j}}} \right)^{-(1-\theta_j)} 
\end{equation*}
is increasing which is equivalent to $\psi_j(s)/s^{\frac{1}{1-\theta_j}}$ being decreasing. With the generalized exponent $1 \leq p_{\psi_j} < \infty$ in \eqref{GL_equivalent_Characterization_exponent2}  this is the case if: 
\begin{equation*}
\frac{1}{1-\theta_j}> p_{\psi_j} \hspace{3ex} \mbox{\it equivalently} \hspace{3ex} 1-\frac{1}{p_{\psi_j}} < \theta_j. 
\end{equation*}
Since the intersection of sets
\begin{align*}
    T := \left \{ \Theta \in (0, 1)^n; ~1 - \frac{1}{p_{\psi_1}} < \theta_1 \right \} \cap \dots \cap \left \{ \Theta \in (0, 1)^n; 1 - \frac{1}{p_{\psi_n}} < \theta_n \right \}
\end{align*}
is open and non-empty and we further assume condition \eqref{condition}, which can be rephrased as
\begin{align*}
    \sum_{j=1}^n (1 - \frac{1}{p_{\psi_j}}) < 1,
\end{align*}
there exists a solution $\Theta \in T \cap \overset{\circ}{S}_n$, which we now fix in the following. With this choice of $\Theta$, all the functions $\Phi_1, \dots, \Phi_n$ are monotonously increasing.
\vspace{1mm}\par 
In order to verify that the $\Phi_j$ define weak $\Phi$-functions we have to show that they satisfy $\textup{(aInc)}_1$. Note that 
\begin{align*}
\frac{\Phi_j^{-1}(s)}{s}
&=\left( \frac{\psi_j^{-1}(s)}{s} \right)^{\frac{1}{\theta_j}}= \left( \frac{\psi_j(\tau_j)}{\tau_j} \right)^{-\frac{1}{\theta_j}}, 
\end{align*}
where $\psi_j(\tau_j)=s$. Since the $\psi_j$ are monotonously increasing and satisfy $\textup{(aInc)}_1$ with $a=1$ (according to 
(\ref{GL-convexity-and-one-decreasing})) we conclude that the quotients on the left hand side are monotonously decreasing, i.e.  $\Phi_j^{-1} \in \textup{(aDec)}_1$ with $a=1$ 
for $j = 1,\dots, n$. According to \cite[Prop. 2.3.7]{Harjulehto_Hasto2019} this is equivalent to $\Phi_j$ being $\textup{(aInc)}_1$. Hence it follows that the $\Phi_j$, $j = 1, \dots, n$, are weak $\Phi$-functions. 
\vspace{1mm}\par 
In order to show that $\Phi_j$ satisfy $p_{\Phi_j} < \infty$ is is sufficient to prove the existence of $\alpha_j < \infty$ such that 
\begin{equation*}
s \mapsto \frac{\Phi_j(s)}{s^{\alpha_j}}, \hspace{5ex} (j=1, \dots, n)
\end{equation*}
are decreasing functions (see \cite[Lemma 2.2.6]{Harjulehto_Hasto2019}). With $\theta_j$ as above and $ \tau_j= \Phi_j(s)=\psi_j(\tilde{s}_j)$ we have: 
\begin{align*}
\frac{\Phi_j(s)}{s^{\alpha_j}}
&= 
\left( \frac{\Phi_j^{-1}(\tau_j)}{\tau_j^{\frac{1}{\alpha_j}}} \right)^{-\alpha_j}=
\left( \frac{\psi_j^{-1}(\tau_j)}{\tau_j^{1-\theta_j +\frac{\theta_j}{\alpha_j}}}\right)^{-\frac{\alpha_j}{\theta_j}}
= \left( \frac{\psi_j(\tilde{s}_j)}{\tilde{s}_j^{\frac{1}{1-\theta_j + \frac{\theta_j}{\alpha_j}}}}
 \right)^{\frac{\alpha_j}{\theta_j}(1-\theta_j+ \frac{\theta_j}{\alpha_j})}. 
\end{align*}
The right hand sides are decreasing in case of: 
\begin{equation*}
\frac{1}{1-\theta_j + \frac{\theta_j}{\alpha_j}}>p_{\psi_j}, \quad j = 1, \dots, n. 
\end{equation*}
By choosing $\alpha_j>0$ sufficiently large these conditions are fulfilled since $1/(1-\theta_j)> p_{\psi_j}$. This proves that $p_{\Phi_j} < \infty$. Similarly, one deduces that $q_{\Phi_j} > 1$: With the same notation as above, we have that $q_{\Phi_j} > 1$ if there exists some $\beta_j > 1$ such that $\Phi_j(s)/s^{\beta_j}$ is increasing. Just as above, this is now equivalent to saying that
\begin{align*}
    \left( \frac{\psi_j(\tilde{s}_j)}{\tilde{s}_j^{\frac{1}{1-\theta_j + \frac{\theta_j}{\beta_j}}}}
 \right)^{\frac{\beta_j}{\theta_j}(1-\theta_j+ \frac{\theta_j}{\beta_j})}
\end{align*}
is increasing, which is true whenever
\begin{align*}
    \frac{1}{1-\theta_j + \frac{\theta_j}{\beta_j}} < q_{\psi_j}.
\end{align*}
Since we assumed $q_{\psi_j} > 1$, upon letting $\beta_j$ being close enough to $1$, the above inequality is satisfied. This finishes the proof.
\end{proof}
\begin{rem}\label{Remark_equivalent_functions}
In \cite{Harjulehto_Hasto2019} an equivalence relation of functions is defined. More precisely,  $\varphi $ and $\psi$ on $[0, \infty)$ are said to be 
equivalent, $\varphi \simeq \psi$, if there is $L\geq 1$ such that 
for $j=1,2$ and $t>0$: 
\begin{equation*}
\varphi(L^{-1} t) \leq \psi(t) \leq \varphi(Lt).
\end{equation*}
Let $\Phi_j\in \Phi_w$, $j=1,\dots, n$ be the weak $\Phi$-functions constructed in Proposition \ref{Prop-construction-Phi-functions}. 
According to \cite[Lemma 2.2.1]{Harjulehto_Hasto2019} there exist convex and left-continous $\Phi$-functions $\widetilde{\Phi}_j\in \Phi_c$ 
that are equivalent to $\Phi_j$, i.e.  $\Phi_j \simeq \widetilde{\Phi}_j$ with $q_{\widetilde{\Phi}_j} > 1$. \cite[Prop. 3.2.4]{Harjulehto_Hasto2019} implies that 
$L_{\Phi_j}(\mathbb{R}^{2d})=L_{\widetilde{\Phi}_j}(\mathbb{R}^{2d})$ and the norms of both Orlicz spaces are comparable. Moreover, 
\cite[Lemma 2.1.8 (c)]{Harjulehto_Hasto2019} in combination with \cite[Lemma 2.2.6]{Harjulehto_Hasto2019} implies that 
the doubling property $\Delta_2$ is preserved under the equivalence relation "$\simeq$". Hence, $\widetilde{\Phi}_j$ fulfills $\Delta_2$, as well and therefore 
$\widetilde{\Phi}_j$ are continuous Young functions in the sense of the present paper which also satisfy $p_{\widetilde{\Phi}_j} < \infty$.
\end{rem}
Combining Theorem \ref{prop:iterated_interpolation}, Proposition \ref{Prop-construction-Phi-functions} and Remark \ref{Remark_equivalent_functions} shows.  
\begin{thm}\label{Theorem_interpolation_by_extrapolation}
Let $\psi_0, \psi_1, \dots, \psi_n$ be Young functions with $1 < q_{\psi_j} \leq p_{\psi_j} < \infty$ for $j = 1, \dots, n$ and for all $s>0$ satisfying the relation
\begin{equation}
    s^{n-1}\psi_0^{-1}(s) = \psi_1^{-1}(s)\cdot \dots \cdot  \psi_n^{-1}(s) 
\end{equation}
and assume that (\ref{condition}) holds. Then, for any $k = 0, \dots, n$ the $(n-1)$-times iterated convolutions acts as a continuous multi-linear operator: 
\begin{align*}
        s_{\psi_1}^w \times \dots \times s_{\psi_k}^w \times L^{\psi_{k+1}}(\mathbb R^{2d}) \times \dots \times L^{\psi_n}(\mathbb R^{2d}) \to \begin{cases}
            L^{\psi_0}(\mathbb R^{2d}), \quad &k \text{ even},\\
            s_{\psi_0}^w, \quad &k \text{ odd}.
        \end{cases}
    \end{align*}
\end{thm}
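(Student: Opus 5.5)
The proof combines the three preceding results. Starting from $\psi_0,\psi_1,\dots,\psi_n$ satisfying \eqref{GL-IYF} and \eqref{condition}, I first invoke Proposition \ref{Prop-construction-Phi-functions}. This produces $\Theta\in\overset{\circ}{S}_n$ and weak $\Phi$-functions $\Phi_1,\dots,\Phi_n$ with $1<q_{\Phi_j}\le p_{\Phi_j}<\infty$ satisfying \eqref{IYF-1}--\eqref{IYF-3}. For \eqref{IYF-1} I check directly that, with $\Phi_j^{-1}(\tau)=\psi_j^{-1}(\tau)^{1/\theta_j}\tau^{-(1-\theta_j)/\theta_j}$,
\begin{align*}
\prod_{j=1}^n (\Phi_j^{-1})^{\theta_j}(\tau)=\tau^{-\sum_j(1-\theta_j)}\prod_{j=1}^n\psi_j^{-1}(\tau)=\tau^{-(n-1)}\prod_{j=1}^n\psi_j^{-1}(\tau)=\psi_0^{-1}(\tau),
\end{align*}
where I use $\sum_j\theta_j=1$ and \eqref{GL-IYF}. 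The identification of $\psi_0$ with $[\Phi_1,\dots,\Phi_n]_{[\Theta]}$ then holds at the level of left-inverses. Because $\psi_0$ is a continuous Young function, this should determine it up to the identification used in the Orlicz norm, and I only need equality of the spaces $L^{\psi_0}$ and $s^w_{\psi_0}$.

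Theorem \ref{prop:iterated_interpolation} is stated for Young functions, while the $\Phi_j$ are only weak $\Phi$-functions, so I apply Remark \ref{Remark_equivalent_functions}. It gives convex, left-continuous $\widetilde\Phi_j\simeq\Phi_j$ that are Young functions with $1<q_{\widetilde\Phi_j}\le p_{\widetilde\Phi_j}<\infty$. I then apply Theorem \ref{prop:iterated_interpolation} to $\widetilde\Phi_1,\dots,\widetilde\Phi_n$ with the chosen $\Theta$ and the given $k$. This shows that the iterated convolution maps
\begin{align*}
s^w_{[\Phi_{[1]},\widetilde\Phi_1]_{[\theta_1]}}\times\dots\times L^{[\Phi_{[1]},\widetilde\Phi_n]_{[\theta_n]}}(\mathbb R^{2d})
\end{align*}
continuously into $s^w_{[\widetilde\Phi_1,\dots,\widetilde\Phi_n]_\Theta}$ for $k$ odd and into $L^{[\widetilde\Phi_1,\dots,\widetilde\Phi_n]_\Theta}(\mathbb R^{2d})$ for $k$ even.

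The remaining step, which I expect to be the main (though mild) obstacle, is to replace each $\widetilde\Phi_j$ by $\Phi_j$ in these interpolated functions. Equivalence $\varphi\simeq\psi$ means $\varphi(L^{-1}t)\le\psi(t)\le\varphi(Lt)$. Taking left-inverses, this gives $L^{-1}\varphi^{-1}\le\psi^{-1}\le L\varphi^{-1}$, with some care at the level of left-inverses via \cite[Lemma 2.3.3]{Harjulehto_Hasto2019}-type statements. Since the operations $[\cdot,\cdot]_{[\theta]}$ and $[\cdot,\dots,\cdot]_\Theta$ are geometric means of left-inverses, such two-sided bounds pass through them with constant $L$. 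Therefore
\begin{align*}
[\Phi_{[1]},\widetilde\Phi_j]_{[\theta_j]}\simeq[\Phi_{[1]},\Phi_j]_{[\theta_j]}=\psi_j,\qquad [\widetilde\Phi_1,\dots,\widetilde\Phi_n]_\Theta\simeq[\Phi_1,\dots,\Phi_n]_\Theta=\psi_0 .
\end{align*}
By \cite[Prop.~3.2.4]{Harjulehto_Hasto2019}, equivalent weak $\Phi$-functions give the same Orlicz spaces with comparable norms. The Schatten analogue holds because the spaces $s^w_\varphi$ are isometric to $L^\varphi(\mathcal L(\mathcal H))$, and the latter norm is the Orlicz norm of the singular value sequence. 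Substituting these identifications into the mapping property from the previous step yields the claimed continuity $s^w_{\psi_1}\times\dots\times s^w_{\psi_k}\times L^{\psi_{k+1}}\times\dots\times L^{\psi_n}\to L^{\psi_0}$ or $s^w_{\psi_0}$, according to the parity of $k$.
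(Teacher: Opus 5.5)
Your proposal is correct and takes the same route as the paper, whose proof is just the one-line combination of Proposition \ref{Prop-construction-Phi-functions}, Remark \ref{Remark_equivalent_functions} and Theorem \ref{prop:iterated_interpolation}. You have spelled out the two points the paper leaves implicit: the check of \eqref{IYF-1} using $\sum_j\theta_j=1$, and the fact that $\simeq$ survives taking geometric means of left-inverses, so replacing $\Phi_j$ by $\widetilde\Phi_j$ does not change the interpolated Orlicz and Orlicz--Schatten spaces.
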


\section{Dilated convolutions}\label{sec:4}

In the following, we denote for a function $a: \mathbb R^{2d} \to \mathbb C$ and a constant $0 \neq t \in \mathbb R$:
\begin{align*}
    a_t(z) := a(tz)
\end{align*}
Note that:
\begin{align*}
    a_s \ast b_t = \left( a \ast b_{t/s} \right)_s.
\end{align*}
We have the following result:
\begin{thm}[{\cite[Theorem 3.3']{Toft2002}}]\label{thm:dilated_Toft}
    Let $0 \neq t_1, \dots, t_n \in \mathbb R$ and $c_1, \dots, c_n \in \{ -1, 1\}$ such that
    \begin{align}\label{cond:tj}
         \frac{c_1}{t_1^2} + \dots + \frac{c_n}{t_n^2} = 1.
    \end{align}
    Further, let $p_1, \dots, p_n, r \geq 1$ such that
    \begin{align*}
        \frac{1}{p_1} + \dots + \frac{1}{p_n} = n-1 + \frac{1}{r}.
    \end{align*}
    Then, the map
    \begin{align*}
        \mathcal S(\mathbb R^{2d}) \times \dots \times \mathcal S(\mathbb R^{2d}) \ni (a^1, \dots, a^n) \mapsto a_{t_1}^1 \ast \dots \ast a_{t_n}^n \in \mathcal S(\mathbb R^{2d})
    \end{align*}
    continuously extends to a multi-linear map
    \begin{align*}
        s_{p_1}^w \times \dots \times s_{p_n}^w \to s_r^w
    \end{align*}
    satisfying
    \begin{align*}
        \| a_{t_1}^1 \ast \dots \ast a_{t_n}^n\|_{s_r^w} \leq (2\pi)^{d(n-1)/2}|t_1|^{-2d/p_1} \cdot \dots \cdot |t_n|^{2d/p_n} \| a^1\|_{s_{p_1}^w} \cdot \dots \cdot \| a^n \|_{s_{p_n}^w}.
    \end{align*}
\end{thm}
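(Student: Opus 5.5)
The statement is a citation of \cite[Theorem 3.3']{Toft2002}, so the plan below reconstructs the standard argument rather than anything new. It proceeds in three stages: the bilinear case $n=2$ at the endpoint exponents, then complex multilinear interpolation, then induction on $n$.

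\textbf{Dilation and the case $n=1$.} First record how a dilation acts on Weyl symbols. For $0\neq t\in\mathbb R$, $\mathrm{op}^w(a_t)$ is conjugate to $\mathrm{op}^w(a)$ by a unitary dilation of $L^2(\mathbb R^d)$, up to the normalizing scalar factor $|t|^{-2d/p}$ that appears in the $s_p^w$ norm. This gives $\|a_t\|_{s_p^w}=|t|^{-2d/p}\|a\|_{s_p^w}$ when $|t|=1$, and it fixes the bookkeeping of constants in general. For $t=-1$, $a_{-1}$ is the symbol of $\check U\,\mathrm{op}^w(a)\,\check U$ with $\check U f(x)=f(-x)$, which is an isometry on every $s_p^w$. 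In particular the case $n=1$, where $c_1/t_1^2=1$ forces $t_1=\pm1$, is immediate.

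\textbf{The case $n=2$ at the endpoints.} The key step is the bilinear estimate for $1/t_1^2+1/t_2^2=1$. Write $1/t_1=\cos\theta$ and $1/t_2=\sin\theta$. I would realize $a_{t_1}\ast b_{t_2}$ as a partial trace: take rank-one operators $a=W(f_1,g_1)$ and $b=W(f_2,g_2)$ (Wigner distributions). The Wigner distribution of $(f_1\otimes f_2, g_1\otimes g_2)$ on $\mathbb R^{4d}$ equals $W(f_1,g_1)\otimes W(f_2,g_2)$. The metaplectic rotation by $\theta$ mixing the two copies of $\mathbb R^{2d}$ transforms this into a function whose marginal in one variable, after rescaling, is $a_{t_1}\ast b_{t_2}$, up to the factor $(2\pi)^{d/2}$ and the Jacobians $|t_j|^{-2d}$. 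In operator language this marginal is the partial trace of $U_\theta(A\otimes B)U_\theta^*$. A partial trace maps $S^1(L^2(\mathbb R^{2d}))$ contractively to $S^1(L^2(\mathbb R^d))$, and $U_\theta$ is unitary. This gives $s_1^w\times s_1^w\to s_1^w$ for rank-one symbols, hence for all of them by the nuclear decomposition $A=\sum\lambda_k f_k\otimes\overline{g_k}$.

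For the endpoint $s_\infty^w\times s_1^w\to s_\infty^w$ I would argue by duality. Pair $a_{t_1}\ast b_{t_2}$ with $c\in s_1^w$ via $\int(a_{t_1}\ast b_{t_2})\,c$. Changing variables rewrites this as $\int a\cdot(\check b_{t'}\ast c_{t''})$ for suitable rescaled $t',t''$, which again satisfy a relation of the form \eqref{cond:tj}. The $s_1\times s_1\to s_1$ estimate then bounds it, since $s_\infty^w$ is dual to $s_1^w$ under this pairing. The symmetric endpoint $s_1^w\times s_\infty^w\to s_\infty^w$ follows the same way.

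\textbf{Interpolation and the minus signs.} Bilinear complex interpolation between $(1,1;1)$, $(\infty,1;\infty)$ and $(1,\infty;\infty)$ covers the whole triangle $1/p_1+1/p_2=1+1/r$. The constants interpolate multiplicatively, producing $|t_1|^{-2d/p_1}|t_2|^{-2d/p_2}$ from the Jacobians. The sign combinations with a minus, such as $1/t_1^2-1/t_2^2=1$, correspond to hyperbolic rather than elliptic symplectic maps mixing the two factors. Equivalently, one replaces $B$ by its transpose or complex conjugate, whose Weyl symbol is a reflection of $b$ and preserves all $s_p^w$ norms, and then runs the same partial-trace argument with a symplectic map of determinant one.

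\textbf{Induction on $n$.} Given \eqref{cond:tj} with $n\geq3$, split off the last factor using $a_s\ast b_t=(a\ast b_{t/s})_s$. Combine $a^1_{t_1}\ast\cdots\ast a^{n-1}_{t_{n-1}}$ into $(F)_{u}$ with $c'/u^2=\sum_{j<n}c_j/t_j^2$. Apply the inductive hypothesis to $F$, using the rescaled relation after dividing by $c'/u^2$, and then apply the case $n=2$ to $(F)_u\ast a^n_{t_n}$. The exponents chain correctly because $1/r'=\sum_{j<n}1/p_j-(n-2)$. The degenerate case $\sum_{j<n}c_j/t_j^2=0$ must be avoided by reordering the factors, and I expect this, together with checking that all the constants combine to exactly $(2\pi)^{d(n-1)/2}\prod|t_j|^{-2d/p_j}$, to be the main technical obstacle.
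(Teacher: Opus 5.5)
The paper gives no proof of this statement; it is quoted from \cite[Theorem 3.3']{Toft2002}, so there is no in-paper argument to compare with. Your reconstruction is sound:
\begin{itemize}
\item the partial trace of a rotated tensor product (elliptic case), or of a boosted one with $B$ replaced by $B^{T}$ (hyperbolic case), gives $s_1^w\times s_1^w\to s_1^w$;
\item duality gives the $s_\infty^w$ endpoints;
\item bilinear complex interpolation fills the triangle;
\item induction on $n$ handles $n\geq 3$.
\end{itemize}
The key identity holds with no rescaling of the output variable. For $\cos\theta=1/t_1$ and $\sin\theta=1/t_2$,
\[
\int_{\mathbb R^{2d}} a(z\cos\theta-w\sin\theta)\,b(z\sin\theta+w\cos\theta)\,dw=|t_1t_2|^{2d}\,(a_{t_1}\ast b_{t_2})(z),
\]
and the same holds with $\cosh,\sinh$ in the hyperbolic case. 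So the Jacobians alone produce $|t_1|^{-2d/p_1}|t_2|^{-2d/p_2}$. Your bookkeeping gives $|t_n|^{-2d/p_n}$; the positive exponent on $|t_n|$ in the displayed estimate is a misprint.

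You should drop your opening claim that $\mathrm{op}^w(a_t)$ is unitarily conjugate to $\mathrm{op}^w(a)$ up to a scalar, with $\|a_t\|_{s_p^w}=|t|^{-2d/p}\|a\|_{s_p^w}$ ``in general''. For $|t|\neq1$ the map $z\mapsto tz$ multiplies the symplectic form by $t^2$, so no unitary implements it; unitary dilations of $L^2(\mathbb R^d)$ implement $(x,\xi)\mapsto(\lambda x,\xi/\lambda)$. Concretely, conjugation preserves positivity. Yet if $a$ is the Gaussian whose quantization is a multiple of the harmonic-oscillator ground-state projection, then $\mathrm{op}^w(a_t)$ has eigenvalues of both signs for $|t|>1$. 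In fact $\|a_t\|_{s_1^w}=\|a\|_{s_1^w}$ for all $|t|\geq1$. Dilations are controlled only jointly with convolution under \eqref{cond:tj}, which is exactly what the theorem says.

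Your argument does not actually need the claim. In the induction you only use $\|F\|_{s_{r'}^w}$, never a norm of $F_u$, and the powers of $|u|$ cancel exactly:
\begin{itemize}
\item $g_u\ast h_u=|u|^{-2d}(g\ast h)_u$ contributes $|u|^{-2d(n-2)}$;
\item the hypothesis applied to $F=a^1_{t_1/u}\ast\cdots\ast a^{n-1}_{t_{n-1}/u}$ contributes $|u|^{2d\sum_{j<n}1/p_j}$;
\item the case $n=2$ contributes $|u|^{-2d/r'}$;
\item and $\sum_{j<n}1/p_j-1/r'=n-2$.
\end{itemize}
So the constants multiply to $C_2C_{n-1}\prod_j|t_j|^{-2d/p_j}$. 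The degenerate case is also harmless: not all $c_k/t_k^2$ can equal $1$, so by commutativity you can split off a factor with $c_k/t_k^2\neq1$.

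One ordering point: dualizing flips the sign type, since $t'=t_2/t_1$ and $t''=1/t_1$ satisfy $-c_1c_2/t'^2+c_1/t''^2=1$. Thus the elliptic $s_\infty^w$ endpoint rests on the hyperbolic $s_1^w$ estimate, which must be proved first. Also, since $\mathcal S(\mathbb R^{2d})$ is not dense in $s_\infty^w$, define that endpoint by the duality formula, or interpolate with symbols of compact operators.
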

If we fix the $t_j$ as in the previous result, then the particular choice of the $p_j$ with $p_j = 1$ for all $j = 1, \dots, n$ gives the continuous multi-linear map:
\begin{align*}
    s_1^w \times \dots \times s_1^w \to s_1^w.
\end{align*}
Similarly, we can have all but one $p_j$ equal to one and one particular $p_k = \infty$, giving the continuous mapping
\begin{align*}
    s_1^w \times \dots \times s_1^w \times s_\infty^w \times s_1^w \times \dots \times s_1^w \to s_\infty^w,
\end{align*}
and this works for any choice of $k = 1, \dots, n$. In particular, we can now reason as in the proof of Proposition \ref{prop_1} to obtain:
\begin{prop}
    Let $\Phi$ be a Young function satisfying $1 < q_\Phi \leq p_\Phi < \infty$. Further, let $0 \neq t_j \in \mathbb R$, $j = 1, \dots, n$, satisfy condition \eqref{cond:tj}. Then, the dilated convolution
    \begin{align*}
        \mathcal S(\mathbb R^{2n}) \times \dots \times \mathcal S(\mathbb R^{2n})\ni (a^1, \dots, a^n) \mapsto a_{t_1}^1 \ast \dots \ast a_{t_n}^n \in \mathcal S(\mathbb R^{2n})
    \end{align*}
    extends for every $k =1, \dots, n$ to a continuous multi-linear map
    \begin{align*}
        s_1^w\times \dots \times s_1^w \times s_{\Phi}^w \times s_1^w \times \dots \times s_1^w \to s_\Phi^w
    \end{align*}
    with $s_\Phi^w$ at the $k$-th position of the product on the left-hand side.
\end{prop}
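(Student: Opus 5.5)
We follow the proof of Proposition \ref{prop_1} and apply Corollary \ref{cor:interpolation} to a linear operator obtained by freezing all arguments except the $k$-th one. Fix $k \in \{1, \dots, n\}$ and Schwartz functions $a^j$, $j \neq k$, normalized so that $\| a^j\|_{s_1^w} = 1$. On $\mathcal S(\mathbb R^{2d})$ we define
\begin{align*}
    T b := a^1_{t_1} \ast \dots \ast a^{k-1}_{t_{k-1}} \ast b_{t_k} \ast a^{k+1}_{t_{k+1}} \ast \dots \ast a^n_{t_n},
\end{align*}
which is linear in $b$. Through the isometric identification $s_p^w \cong S^p(\mathcal H) = L^p(\mathcal L(\mathcal H))$, we regard $T$ as an operator on operators, $A \mapsto \mathrm{op}^w(T\,\mathrm{sym}^w(A))$. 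The goal is to show that this operator is of strong type $(1,1)$ and $(\infty,\infty)$ on $\mathcal M = \mathcal N = \mathcal L(\mathcal H)$.

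Both strong-type estimates come directly from Theorem \ref{thm:dilated_Toft}. The choice $p_j = 1$ for all $j$, $r = 1$ satisfies $\sum 1/p_j = n = n-1+1/r$. It gives
\begin{align*}
    \| Tb\|_{s_1^w} \leq (2\pi)^{d(n-1)/2} \prod_j |t_j|^{-2d}\, \| b\|_{s_1^w}.
\end{align*}
The choice $p_k = \infty$, $p_j = 1$ for $j \neq k$, $r = \infty$ satisfies $\sum 1/p_j = n-1 = n-1 + 1/r$. It gives
\begin{align*}
    \| Tb\|_{s_\infty^w} \leq (2\pi)^{d(n-1)/2} \prod_{j\neq k}|t_j|^{-2d}\, \| b\|_{s_\infty^w}.
\end{align*}
Both constants are independent of $b$. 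By density, $T$ extends to $s_1^w$ and to $s_\infty^w$. The two extensions agree on the intersection, since both are limits of the same Schwartz-defined map, so $T$ is a well-defined linear map $s_1^w + s_\infty^w \to s_1^w + s_\infty^w$. Strong type $(1,1)$ implies weak type $(1,1)$, because $\|\cdot\|_{L^1_w} \leq \|\cdot\|_{L^1}$.

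Corollary \ref{cor:interpolation}, together with the remark after Theorem \ref{thm:interpolation_strong_type} permitting domains of the form $L^{p_0}+L^{p_1}$, now yields
\begin{align*}
    \| Tb\|_{s_\Phi^w} \leq C\, \| b\|_{s_\Phi^w},
\end{align*}
with $C$ depending only on $\Phi$, $d$, $n$ and the $t_j$. The hypothesis $1 < q_\Phi \leq p_\Phi < \infty$ is exactly what the corollary requires. Undoing the normalization gives $C \prod_{j\neq k}\|a^j\|_{s_1^w}$, which establishes joint multilinear continuity on Schwartz inputs. We then extend by density of $\mathcal S$ in $s_1^w$ in the frozen variables. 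This also uses the fact that, for each fixed $b \in s_\Phi^w$, the map is continuous in each $a^j$: this follows by the same interpolation applied with the roles reversed, or directly from the bound being multilinear.

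The main obstacle is the extension step. The map is only given on Schwartz functions, and $\mathcal S$ is not dense in $s_\infty^w$. We must therefore make sure that the extension to $s_1^w + s_\infty^w$ needed by the corollary is meaningful, and that the resulting operator on $s_\Phi^w$ is the continuous extension of the Schwartz-defined map. To handle this, we interpret $T$ on $s_\infty^w$ via the weak-$*$ (distributional) extension of Toft's theorem, which is consistent on $s_1^w \cap s_\infty^w$. We then note that $\mathcal S$ is dense in $s_\Phi^w$: finite-rank operators are dense in $S^\Phi$ because $\Phi$ satisfies $\Delta_2$. Hence the extension to $s_\Phi^w$ is unique.
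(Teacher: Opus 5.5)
Your proposal is correct and follows the paper's argument. Like the paper, you apply Theorem~\ref{thm:dilated_Toft} once with all $p_j = 1$ and once with $p_k = \infty$, freeze the other arguments, and invoke Corollary~\ref{cor:interpolation} exactly as in the proof of Proposition~\ref{prop_1}. Your extra care in defining $T$ on $s_\infty^w$ (where $\mathcal S$ is not dense) and in using the density of $\mathcal S$ in $s_\Phi^w$ under $\Delta_2$ supplies details the paper leaves implicit.
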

Having this result at hand, we are now in the position to apply the reasoning through iterated complex interpolation that was presented in the proof of Proposition \ref{prop:iterated_interpolation}. Following the same steps of that proof, one now deduces the following:
\begin{thm}
    Let $\Phi_1, \dots, \Phi_n$ be Young functions satisfying $1 < q_{\Phi_j} \leq p_{\Phi_j} < \infty$ for $j = 1, \dots, n$, $\Theta \in \overset{\circ}{S}_n$ and $0 \neq t_j \in \mathbb R$, $j = 1, \dots, n$, satisfy condition \eqref{cond:tj}. Then, the dilated convolution
     \begin{align*}
        \mathcal S(\mathbb R^{2n}) \times \dots \times \mathcal S(\mathbb R^{2n})\ni (a^1, \dots, a^n) \mapsto a_{t_1}^1 \ast \dots \ast a_{t_n}^n \in \mathcal S(\mathbb R^{2n})
    \end{align*}
    extends to a continuous multi-linear map:
    \begin{align*}
         s_{[\Phi_{[1]},\Phi_1]_{[\theta_1]}}^w \times \dots \times s_{[\Phi_{[1]},\Phi_n]_{[\theta_n]}}^w \to 
            s_{[\Phi_1, \dots, \Phi_n]_\Theta}^w.
    \end{align*}
\end{thm}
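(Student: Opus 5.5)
The plan is to copy the proof of Theorem \ref{prop:iterated_interpolation}, with the $n$ endpoint estimates now supplied by the preceding Proposition on dilated convolutions. Fix the $t_j$ satisfying \eqref{cond:tj}. By that Proposition, applied with $\Phi=\Phi_k$ for each $k=1,\dots,n$, the dilated convolution extends to a bounded multilinear map
\begin{align*}
s_1^w\times\dots\times s_1^w\times s_{\Phi_k}^w\times s_1^w\times\dots\times s_1^w\to s_{\Phi_k}^w,
\end{align*}
with $s_{\Phi_k}^w$ in position $k$. These $n$ maps agree on $\mathcal S(\mathbb R^{2d})^n$, which is contained in all the relevant spaces. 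This compatibility is what we need to apply multilinear complex interpolation. Only Weyl symbol classes occur here, so we use part (b) of Theorem \ref{thm:complex_interpolation_orlicz} throughout and never part (a).

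Next we interpolate iteratively, exactly as before. Interpolate the first two estimates with parameter $\widetilde\theta_1=\theta_2/(\theta_1+\theta_2)$. Then interpolate the result with the third estimate using $\widetilde\theta_2=\theta_3/(\theta_1+\theta_2+\theta_3)$. In general, use $\widetilde\theta_j=\theta_{j+1}/(\theta_1+\dots+\theta_{j+1})$ at step $j$. After each step we simplify the indices with the identity \eqref{formula:iterated_interpolation1}, which gives $[[\Phi_{[1]},\Phi]_{[\rho]},\Phi_{[1]}]_{[\nu]}=[\Phi_{[1]},\Phi]_{[\rho(1-\nu)]}$, together with the product rule for the $[\Phi_1,\dots,\Phi_n]_\Theta^{-1}$. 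The backward-induction identities for the $\widetilde\theta_j$ from that proof then show two things. The $j$-th input space becomes $s^w_{[\Phi_{[1]},\Phi_j]_{[\theta_j]}}$, and the target becomes $s^w_{[\Phi_1,\dots,\Phi_n]_\Theta}$. None of this uses the specific form of the operator. The iteration is a purely formal computation on the indices, so it carries over verbatim.

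The main obstacle is justifying the multilinear complex interpolation in the Orlicz–Schatten setting. At intermediate steps the spaces are generated by functions such as $[\Phi_1,\Phi_{[1]}]_{[\widetilde\theta_1]}$. These are only weak $\Phi$-functions, not necessarily convex Young functions. I would handle this with Theorem \ref{thm:complex_interpolation_orlicz}, which is stated for weak $\Phi$-functions and identifies $[s^w_{\varphi_0},s^w_{\varphi_1}]_{[\theta]}=s^w_{[\varphi_0,\varphi_1]_{[\theta]}}$. With that identification, the standard multilinear Riesz–Thorin/Calderón interpolation theorem applies at each step.

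A second point needs care. The extension must be defined consistently on each interpolated space, and the limiting argument requires $\mathcal S(\mathbb R^{2d})$ to be dense in the intermediate spaces. Density holds in the endpoint spaces $s_1^w$ and $s_{\Phi_k}^w$ because $p_{\Phi_k}<\infty$, and it then passes to the complex interpolation spaces by the general density of the intersection. Finally, iterating a bounded multilinear map through successive interpolations only requires boundedness on each pair of compatible couples. That requirement is met at every stage, and the theorem follows.
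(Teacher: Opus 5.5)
Your proposal is correct and follows the paper's own argument: the paper proves this theorem by feeding the $n$ endpoint estimates $s_1^w\times\dots\times s_{\Phi_k}^w\times\dots\times s_1^w\to s_{\Phi_k}^w$ from the preceding Proposition into the iterated complex interpolation scheme (parameters $\widetilde\theta_j$, identity \eqref{formula:iterated_interpolation1}, backward induction) from the proof of Theorem \ref{prop:iterated_interpolation}, with only Weyl symbol classes appearing. One small correction, which does not affect the argument: intermediate functions such as $[\Phi_1,\Phi_{[1]}]_{[\widetilde\theta_1]}$ are in fact convex, because a weighted geometric mean of concave increasing inverses is concave, so you do not need the weak $\Phi$-function generality of Theorem \ref{thm:complex_interpolation_orlicz}.
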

Combining this result with Proposition \ref{Prop-construction-Phi-functions}, as well as Remark \ref{Remark_equivalent_functions}, yields our final result on dilated convolutions:
\begin{thm}\label{thm:dilated_conv}
    Let $\psi_0, \psi_1, \dots, \psi_n$ be Young functions with $1 < q_{\psi_j} \leq p_{\psi_j} < \infty$ for $j = 1, \dots, n$ and for all $s>0$ satisfying the relation
\begin{equation}
    s^{n-1}\psi_0^{-1}(s) = \psi_1^{-1}(s)\cdot \dots \cdot  \psi_n^{-1}(s) 
\end{equation}
and assume that (\ref{condition}) holds. Further, assume that $0 \neq t_j \in \mathbb R$, $j = 1, \dots, n$, satisfy condition \eqref{cond:tj}. Then, the dilated convolution
\begin{align*}
        \mathcal S(\mathbb R^{2n}) \times \dots \times \mathcal S(\mathbb R^{2n})\ni (a^1, \dots, a^n) \mapsto a_{t_1}^1 \ast \dots \ast a_{t_n}^n \in \mathcal S(\mathbb R^{2n})
    \end{align*}
    extends to a continuous multi-linear map:
    \begin{align*}
         s_{\psi_1}^w \times \dots \times s_{\psi_n}^w \to 
            s_{\psi_0}^w
    \end{align*}
\end{thm}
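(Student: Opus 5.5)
The proof mirrors the derivation of Theorem \ref{Theorem_interpolation_by_extrapolation} from Theorem \ref{prop:iterated_interpolation}, now with the preceding theorem on dilated convolutions as input. I would first apply Proposition \ref{Prop-construction-Phi-functions} to the given $\psi_0, \dots, \psi_n$. Its hypotheses are exactly the relation $s^{n-1}\psi_0^{-1}(s) = \prod_j \psi_j^{-1}(s)$, the bounds $1<q_{\psi_j}\le p_{\psi_j}<\infty$, and condition \eqref{condition}. It produces $\Theta \in \overset{\circ}{S}_n$ and weak $\Phi$-functions $\Phi_1,\dots,\Phi_n$ with $1<q_{\Phi_j}\le p_{\Phi_j}<\infty$ satisfying \eqref{IYF-1}--\eqref{IYF-3}. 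Thus $\psi_j = [\Phi_{[1]},\Phi_j]_{[\theta_j]}$ for $j \ge 1$ and $\psi_0 = [\Phi_1,\dots,\Phi_n]_{[\Theta]}$.

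Since the $\Phi_j$ are only weak $\Phi$-functions, I would next pass to equivalent genuine Young functions via Remark \ref{Remark_equivalent_functions}. This gives $\widetilde\Phi_j \in \Phi_c$ with $\widetilde\Phi_j \simeq \Phi_j$, satisfying $\Delta_2$ and $q_{\widetilde\Phi_j}>1$, hence $1<q_{\widetilde\Phi_j}\le p_{\widetilde\Phi_j}<\infty$. The preceding theorem then applies to the $\widetilde\Phi_j$, the fixed $\Theta$, and the $t_j$ satisfying \eqref{cond:tj}. It yields continuity of the dilated convolution
\begin{align*}
s^w_{[\Phi_{[1]},\widetilde\Phi_1]_{[\theta_1]}} \times \dots \times s^w_{[\Phi_{[1]},\widetilde\Phi_n]_{[\theta_n]}} \to s^w_{[\widetilde\Phi_1,\dots,\widetilde\Phi_n]_\Theta}.
\end{align*}

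It then remains to identify these spaces with $s^w_{\psi_j}$ and $s^w_{\psi_0}$, with equivalent norms. The key observation is that $\simeq$ transfers to left-inverses. If $\varphi(L^{-1}t)\le\psi(t)\le\varphi(Lt)$, then $L^{-1}\varphi^{-1}\le \psi^{-1}\le L\varphi^{-1}$. Consequently $[\Phi_{[1]},\widetilde\Phi_j]_{[\theta_j]}^{-1}$ and $\psi_j^{-1}$ are comparable up to the factor $L^{\theta_j}$, and $[\widetilde\Phi_1,\dots,\widetilde\Phi_n]_\Theta^{-1}$ and $\psi_0^{-1}$ are comparable up to $\prod_j L_j^{\theta_j}$. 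Comparability of left-inverses up to constants gives $\simeq$ of the functions themselves, for instance via \cite[Theorem 2.3.6]{Harjulehto_Hasto2019}, with constants absorbed. The norm-equivalence result \cite[Prop.~3.2.4]{Harjulehto_Hasto2019} is stated for $L^\varphi(\mathbb R^{2d})$. For the Orlicz--Schatten side I would transfer it through singular values: it works equally for $\ell^\varphi$, or directly, since $\mu_t$ enters the modular the same way. Equivalent $\Phi$-functions therefore give the same $s^w_\varphi$ with comparable norms. The map $s^w_{\psi_1}\times\dots\times s^w_{\psi_n}\to s^w_{\psi_0}$ is then continuous on Schwartz functions.

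Finally, the extension from $\mathcal S\times\dots\times\mathcal S$ follows by density. Because the $\psi_j$ satisfy $\Delta_2$, finite-rank operators, and hence Weyl symbols in $\mathcal S$, are dense in $s^w_{\psi_j}$, so the bounded multilinear map extends uniquely. I expect the main obstacle to be the bookkeeping in the third step: ensuring that replacing $\Phi_j$ by $\widetilde\Phi_j$ is compatible with the interpolation identities, so that the interpolated functions remain equivalent to $\psi_j$ and $\psi_0$. If inverting comparability of left-inverses proves delicate, the fallback is to note that Theorem \ref{thm:complex_interpolation_orlicz} already holds for weak $\Phi$-functions. In that case only the endpoint inputs, namely the preceding proposition applied to $\widetilde\Phi_j$, require genuine Young functions, and the identification happens at the endpoint spaces $s^w_{\Phi_j}=s^w_{\widetilde\Phi_j}$ before interpolating.
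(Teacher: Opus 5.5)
Your proposal is correct and follows the paper's own route: the paper proves Theorem \ref{thm:dilated_conv} in one line, by combining the preceding dilated-convolution interpolation theorem with Proposition \ref{Prop-construction-Phi-functions} and Remark \ref{Remark_equivalent_functions}. Your extra steps spell out what that line leaves implicit, namely transferring $\simeq$ through left-inverses to identify the interpolated spaces with $s^w_{\psi_j}$ and $s^w_{\psi_0}$, and using the density of $\mathcal S$ to obtain the extension.
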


\section{Passage to Quantum Harmonic Analysis}\label{sec:5}

In this short final section, we want to rephrase the present results in the notions of \emph{quantum harmonic analysis} (QHA) \cite{werner84}. We restrict to the simplest case, i.e., to the projective unitary representation of $\mathbb R^{2d}$ on $L^2(\mathbb R^d)$ given by the Weyl operators $W_z \varphi(t) = e^{it\xi - ix\xi/2}\varphi(t-x)$, where $z = (x, \xi) \in \mathbb R^{2d}$. Nevertheless, there is no obstruction in formulating (and proving) the results at hand for QHA of more general abelian phase spaces, as e.g.~discussed in \cite{Fulsche_Galke2025}, using the tools we have presented in Section \ref{sec:3}.

Recall that for a function $f: \mathbb R^{2d} \to \mathbb C$, a bounded linear operator $A \in \mathcal L(L^2(\mathbb R^d))$ and $z \in \mathbb R^{2d}$ we write
\begin{align*}
    \alpha_z(f) = f(\cdot - z), \quad \alpha_z(A) = W_z A W_z^\ast
\end{align*}
for the shift of a function and an operator, respectively. Further, we denote
\begin{align*}
    \beta_-(f)(z) = f(-z), \quad \beta_-(A) = UAU,
\end{align*}
where $U \in \mathcal L(L^2(\mathbb R^d))$ is the parity operator $U\varphi(t) := \varphi(-t)$. Using these notions, we can define the convolutions of functions and operators formally as
\begin{align}\label{eq:conv_qha1}
    f \ast g(w) &:= \int_{\mathbb R^{2d}} f(z) \alpha_z(g)(w)~dz,\\
    f \ast A := A \ast f &:= \int_{\mathbb R^{2d}} f(z) \alpha_z(A)~dz,\\
    A \ast B(z) &:= \operatorname{tr}(A \alpha_z(\beta_-(B))).\label{eq:conv_qha3}
\end{align}
As is well-known, the convolution of two functions $f$ and $g$ is well-defined as soon as, say, $f \in L^1(\mathbb R^{2d})$ and $g$ is either in $L^p(\mathbb R^{2d})$ for some $1 \leq p \leq \infty$. Similarly, all the convolutions described are well-defined on reasonable classes of functions and operators. Recall that we denote by $S^p(L^2(\mathbb R^d))$ the $p$-Schatten ideal on $L^2(\mathbb R^d)$ for $1 \leq p \leq \infty$, with the usual convention that $S^\infty(L^2(\mathbb R^d)) = \mathcal L(L^2(\mathbb R^d))$. The basic mapping properties of these convolutions can be summarized as:

\begin{thm}{\cite{werner84, Luef_Skrettingland2018a, Fulsche_Galke2025}}
    \begin{enumerate}
        \item Equipped with the convolution operations defined above, $L^1(\mathbb R^{2d}) \oplus S^1(L^2(\mathbb R^d))$ forms a commutative Banach algebra.
        \item Let $1 \leq p \leq \infty$. Then, the convolutions of functions and operators endow $L^p(\mathbb R^{2d}) \oplus S^p(L^2(\mathbb R^d))$ with the structure of a Banach module over $L^1(\mathbb R^{2d}) \oplus S^1(L^2(\mathbb R^d))$.
    \end{enumerate}
\end{thm}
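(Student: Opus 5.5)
The plan is to verify directly the algebraic and normed structure on $L^1(\mathbb R^{2d}) \oplus S^1(L^2(\mathbb R^d))$. The product will be
\begin{align*}
(f, A)\ast(g, B) := (f\ast g + A\ast B,\; f\ast B + A\ast g),
\end{align*}
with norm $\|f\|_{L^1}+\|A\|_{S^1}$. All of the structure reduces to four ingredients. The first is a collection of bilinear norm bounds for each of the three convolutions in \eqref{eq:conv_qha1}--\eqref{eq:conv_qha3}. The second is commutativity, and the third associativity, of the mixed products. The fourth is interpolation in the exponent $p$ to obtain the module statement.

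\textbf{Norm bounds for $p=1$.} The bound $\|f\ast g\|_{L^1}\le \|f\|_{L^1}\|g\|_{L^1}$ is classical. For $f\ast A$, I note that $z\mapsto \alpha_z(A)$ is strongly continuous into $S^p$ for $p<\infty$, and each $\alpha_z$ is an isometry of $S^p$. Hence $f\ast A$ is a Bochner integral with $\|f\ast A\|_{S^p}\le \|f\|_{L^1}\|A\|_{S^p}$. For $p=\infty$ I instead define $f\ast A$ as a weak$^\ast$ integral against $S^1$, which gives the same bound. The key estimate is $\|A\ast B\|_{L^1}\le \|A\|_{S^1}\|B\|_{S^1}$. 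For rank-one operators $A=\varphi\otimes\psi$ and $B=\xi\otimes\eta$, the function $A\ast B(z)$ is a product of two matrix coefficients of the form $\langle W_z u, v\rangle$. Cauchy--Schwarz together with the Moyal orthogonality relation
\begin{align*}
\int_{\mathbb R^{2d}} |\langle W_z u, v\rangle|^2\,dz = (2\pi)^d\|u\|^2\|v\|^2
\end{align*}
then gives the bound, up to a normalization constant that I absorb into the choice of measure. Passing to general trace-class $A,B$ through nuclear (Schmidt) decompositions then yields the claim. Along the way I also record the identity $\int A\ast B\,dz = \operatorname{tr}(A)\operatorname{tr}(B)$.

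\textbf{Commutativity and associativity.} I will verify both on a dense class, namely Schwartz functions and operators with Schwartz Weyl symbols, and then extend by continuity using the bounds above. Commutativity of $f\ast A$ holds by definition. Commutativity of $A\ast B$ follows from cyclicity of the trace together with $UW_zU=W_{-z}$, which gives $\beta_-(\alpha_z(C))=\alpha_{-z}(\beta_-(C))$. For associativity, the cleanest route is through Weyl symbols. First, $\mathrm{sym}^w(f\ast A)=f\ast \mathrm{sym}^w(A)$. Second, $A\ast B$ equals a constant multiple of $\mathrm{sym}^w(A)\ast \beta_-\mathrm{sym}^w(B)$, with $\beta_-$ the reflection; this is the relation behind the $s^w_p$ estimates quoted in Section \ref{sec:3}. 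With these two facts, every mixed associativity law, such as $(A\ast B)\ast C = A\ast (B\ast C)$ or $(f\ast A)\ast B = f\ast(A\ast B)$, reduces to associativity and commutativity of ordinary convolution of functions. This completes part (1).

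\textbf{Module structure for $1\le p\le\infty$.} The bounds $\|f\ast g\|_{L^p}\le\|f\|_{L^1}\|g\|_{L^p}$ and $\|f\ast B\|_{S^p}\le \|f\|_{L^1}\|B\|_{S^p}$ were obtained above. For $\|A\ast g\|_{S^p}$ with $A\in S^1$ I use the same Bochner or weak$^\ast$ integral, now with the roles of function and operator exchanged. For $A\ast B$ with $A\in S^1$ and $B\in S^p$, I fix $A$ and consider the linear map $B\mapsto A\ast B$. At $p=\infty$ it is bounded $S^\infty\to L^\infty$ with $|\operatorname{tr}(A\alpha_z\beta_-(B))|\le\|A\|_{S^1}\|B\|_{\mathrm{op}}$. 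At $p=1$ it is bounded $S^1\to L^1$ by the estimate above. Complex interpolation between these two endpoints, combining Riesz--Thorin with the Schatten scale, gives $\|A\ast B\|_{L^p}\le \|A\|_{S^1}\|B\|_{S^p}$. The module axioms, namely compatibility $x\ast(y\ast m)=(x\ast y)\ast m$ for $x,y$ in the algebra and $m$ in the module, follow from part (1) by density for $p<\infty$. For $p=\infty$ they follow by weak$^\ast$ continuity of all the maps involved.

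I expect the main obstacle to be the trace-class estimate for $A\ast B$ and, tied to it, a consistent treatment of the normalization constants in the Weyl-symbol identity that drives associativity. A secondary technical point is the $p=\infty$ case, where the integrals are only weak$^\ast$. There I must check that the weak$^\ast$ definitions agree with the dense-class formulas and are weak$^\ast$ continuous, so that the algebraic identities survive the extension.
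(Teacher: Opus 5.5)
The paper gives no proof of this statement; it quotes it from Werner, Luef--Skrettingland and Fulsche--Galke. Your overall plan is essentially the argument in those sources: rank-one reduction plus the Moyal identity for $S^1\times S^1\to L^1$, density for the algebraic laws, and interpolation for the module bounds. However, two steps do not work as written.

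\textbf{The Weyl-symbol identity.} Let $a=\mathrm{sym}^w(A)$ and $b=\mathrm{sym}^w(B)$. Covariance and parity give $\mathrm{sym}^w(\alpha_z(\beta_-(B)))(w)=b(z-w)$. With the standard normalisation, $\operatorname{tr}(\mathrm{op}^w(a)\,\mathrm{op}^w(e))=(2\pi)^{-d}\int a\,e\,dw$. Hence
\[
A\ast B(z)=(2\pi)^{-d}\int_{\mathbb R^{2d}} a(w)\,b(z-w)\,dw=(2\pi)^{-d}\,(a\ast b)(z),
\]
with no reflection: the parity in the definition of $A\ast B$ is there precisely to cancel it. This is not cosmetic. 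With your formula $A\ast B=\kappa\,a\ast\beta_-b$ ($\kappa$ a constant), one gets $B\ast A=\beta_-(A\ast B)$, contradicting the commutativity you just proved. Moreover, for $c=\mathrm{sym}^w(C)$ the symbols of $(A\ast B)\ast C$ and $A\ast(B\ast C)$ would be $\kappa\,(a\ast\beta_-b)\ast c$ and $\kappa\,a\ast(b\ast\beta_-c)$. Their Fourier transforms $\hat a\,(\beta_-\hat b)\,\hat c$ and $\hat a\,\hat b\,(\beta_-\hat c)$ differ in general, so the reduction to ordinary convolution breaks down. The $s^w_p$ estimates of Section 3 cannot detect the difference, since $\|\beta_-b\|_{s^w_p}=\|b\|_{s^w_p}$. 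With the corrected identity your reduction works. The constants match because both sides of every three-factor associativity law contain the same number (zero or one) of operator--operator convolutions.

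\textbf{The action of $S^1$ on $L^p$.} The bound $\|A\ast g\|_{S^p}\lesssim\|A\|_{S^1}\|g\|_{L^p}$ does not follow from ``the same Bochner or weak$^\ast$ integral with the roles exchanged''.
\begin{itemize}
\item Minkowski's inequality for $\int g(z)\alpha_z(A)\,dz$ gives $\|g\|_{L^1}\|A\|_{S^p}$. This needs $g\in L^1$ and is the wrong pairing.
\item At $p=\infty$, bounding the weak$^\ast$ integral requires $\int|\operatorname{tr}(T\alpha_z(A))|\,dz\lesssim\|A\|_{S^1}\|T\|_{S^1}$, i.e.\ the Moyal estimate, not merely $\|\alpha_z(A)\|=\|A\|$.
\end{itemize}
The repair is duality with the estimate you already have. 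For $T\in S^{p'}$,
\[
\operatorname{tr}\big((A\ast g)\,T\big)=\int_{\mathbb R^{2d}} g(z)\,\operatorname{tr}\big(T\alpha_z(A)\big)\,dz=\int_{\mathbb R^{2d}} g(z)\,\big(T\ast\beta_-(A)\big)(z)\,dz .
\]
Using commutativity and your interpolated bound for $B\mapsto\beta_-(A)\ast B$ at exponent $p'$, this gives
\[
|\operatorname{tr}((A\ast g)T)|\le\|g\|_{L^p}\|\beta_-(A)\ast T\|_{L^{p'}}\lesssim\|g\|_{L^p}\|A\|_{S^1}\|T\|_{S^{p'}}.
\]
This also serves as the definition of $A\ast g$ for $p>1$. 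Equivalently, you can interpolate $g\mapsto A\ast g$ between the Bochner endpoint $L^1\to S^1$ and the endpoint $L^\infty\to\mathcal L(L^2(\mathbb R^d))$ just described.

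With these two corrections the rest of your plan goes through: normalising the measure to $(2\pi)^{-d}dz$ makes all three products contractive, and density (respectively weak$^\ast$ continuity for $p=\infty$) gives the module axioms.
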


Besides these continuity results, the general form of Young's convolution estimate is also available for the operator-function convolutions, cf.~also \cite{werner84, Luef_Skrettingland2018a, Fulsche_Galke2025}. As they turn up as a special case of the results that we are going to formulate, we defer from spelling them out explicitly here.

We want to note that for an operator $A = \mathrm{op}^w(f)$, the class $S^\Phi(L^2(\mathbb R^d))$ agrees (by definition) with the Weyl quantization of $s_\Phi^w$. Links between the two spaces, in the language of quantum harmonic analysis, can be made precise, say, on the level of tempered distributions, cf.~the above-mentioned results as well as the paper \cite{keyl_kiukas_werner16}. We will not explain this further here and refer the interested reader to the literature. Instead, we formulate the results derived in Section \ref{sec:3} in terms of operator convolutions. Proposition \ref{prop_1} now turns into the following:
\begin{prop}
    Let $\Phi$ be a Young function, satisfying $1 < q_\Phi \leq p_\Phi < \infty$. Then, the operator convolutions \eqref{eq:conv_qha1} - \eqref{eq:conv_qha3} turn $L^\Phi(\mathbb R^{2d}) \oplus S^\Phi(L^2(\mathbb R^d))$ into a Banach module over $L^1(\mathbb R^{2d}) \oplus S^1(L^2(\mathbb R^d))$, i.e., we obtain a continuous bilinear map
    \begin{align*}
        \ast: [L^1(\mathbb R^{2d}) \oplus S^1(L^2(\mathbb R^d))] \times [L^\Phi(\mathbb R^{2d}) \oplus S^\Phi(L^2(\mathbb R^d))] \to [L^\Phi(\mathbb R^{2d}) \oplus S^\Phi(L^2(\mathbb R^d))].
    \end{align*}
\end{prop}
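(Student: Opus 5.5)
The plan is to translate the statement into the four mapping properties of Proposition \ref{prop_1} through the Weyl correspondence. Continuity of the bilinear map
\[
\ast: [L^1(\mathbb R^{2d}) \oplus S^1] \times [L^\Phi(\mathbb R^{2d}) \oplus S^\Phi] \to L^\Phi(\mathbb R^{2d}) \oplus S^\Phi
\]
splits into four component estimates, one for each pair of summands:
\begin{align*}
L^1 \times L^\Phi &\to L^\Phi, & L^1 \times S^\Phi &\to S^\Phi,\\
S^1 \times L^\Phi &\to S^\Phi, & S^1 \times S^\Phi &\to L^\Phi.
\end{align*}
The function-function case is exactly the first line of Proposition \ref{prop_1}. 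No further work is needed there.

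For the three mixed cases I use the standard dictionary between QHA convolutions and Weyl symbols. Write $A = \mathrm{op}^w(a)$ and $B = \mathrm{op}^w(b)$. Then, up to normalization constants $(2\pi)^{\pm d}$ depending on the Weyl convention,
\[
f \ast A = \mathrm{op}^w(f \ast a), \qquad A \ast B = a \ast b .
\]
The first identity holds because $\alpha_z(A) = W_z A W_z^\ast$ has Weyl symbol $a(\cdot - z)$. For the second, the parity $\beta_-$ together with the Weyl trace formula $\operatorname{tr}(\mathrm{op}^w(a)\mathrm{op}^w(c)) = (2\pi)^{-d}\int a c$ turns $\operatorname{tr}(A\,\alpha_z(\beta_-(B)))$ into $(2\pi)^{-d}(a \ast b)(z)$. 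I would verify these identities first on $\mathcal S(\mathbb R^{2d})$, respectively on trace-class operators with Schwartz symbols, citing \cite{werner84, Luef_Skrettingland2018a}.

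Since $\|A\|_{S^\Phi} = \|a\|_{s_\Phi^w}$ by definition, the remaining estimates follow directly from Proposition \ref{prop_1}. The bound $\|f \ast A\|_{S^\Phi} \lesssim \|f\|_{L^1}\|A\|_{S^\Phi}$ is the third line, $s_\Phi^w \times L^1 \to s_\Phi^w$; convolution of functions is commutative, so the order of the factors does not matter. The case $S^1 \times L^\Phi \to S^\Phi$ is the second line. The case $S^1 \times S^\Phi \to L^\Phi$ is the fourth line, again using commutativity of symbol convolution, $a \ast b = b \ast a$. Summing the four estimates gives continuity of the bilinear map with respect to the sum norms.

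The main obstacle is a well-definedness issue. The identities above are a priori valid only on dense classes, whereas the QHA convolutions \eqref{eq:conv_qha1}--\eqref{eq:conv_qha3} are defined directly on, say, $S^1 \times S^\Phi$. I have to check that the operator-level definitions agree with the continuous extensions supplied by Proposition \ref{prop_1}.

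I would handle this as follows. Since $p_\Phi<\infty$ gives $\Delta_2$, finite rank operators, respectively Schwartz functions, are dense in $S^\Phi$, respectively $L^\Phi$. Moreover, $S^\Phi \subset S^{p_0} + S^\infty$ for suitable exponents, so both sides of each identity depend continuously on the inputs in a weaker topology, for instance in $L^{p_0}+L^\infty$ or in $\mathcal S'$. By the known $L^p$/Schatten results, both sides are continuous into $\mathcal S'$ and agree on a dense set, so they agree everywhere, and the norm bounds pass to the full spaces. Finally, the module axioms (associativity and compatibility with the algebra product) hold on dense subsets and extend by this continuity, so $L^\Phi \oplus S^\Phi$ is indeed a Banach module over $L^1 \oplus S^1$.
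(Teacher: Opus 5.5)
Your proposal is correct and follows the same route as the paper. The paper gives no separate argument: it presents this proposition as a direct translation of Proposition \ref{prop_1} through the Weyl correspondence $S^\Phi(L^2(\mathbb R^d)) = \mathrm{op}^w(s_\Phi^w)$, and leaves to the literature (e.g.\ \cite{keyl_kiukas_werner16}) both the identification of the QHA convolutions with symbol convolutions and the well-definedness questions that your density and consistency argument fills in.
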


For the remaining result, we cannot use the convenient language of modules over Banach algebras. Theorem \ref{Theorem_interpolation_by_extrapolation} translates into the following:
\begin{thm}
    Let $\psi_0, \psi_1, \dots, \psi_n$ be Young functions with $1 < q_{\psi_j} \leq p_{\psi_j} < \infty$ for $j = 1, \dots, n$. Further, assume they satisfy 
    \begin{align*}
         s^{n-1}\psi_0^{-1}(s) = \psi_1^{-1}(s) \cdot \dots \cdot \psi_n^{-1}(s)
    \end{align*}
    for all $s > 0$. Assume that \eqref{condition} holds true and fix $k \in \{ 0, \dots, n\}$. Then, the $(n-1)$-times iterated convolution of  functions and operators extends to a continuous and multi-linear map:
    \begin{align}
        &S^{\psi_1}(L^2(\mathbb R^d))  \times \dots \times S^{\psi_k}(L^2(\mathbb R^d)) \times \times L^{\psi_{k+1}}(\mathbb R^{2d})\times \dots \times L^{\psi_{n}}(\mathbb R^{2d}) \notag\\
        &\quad \to \begin{cases}
            L^{\psi_0}(\mathbb R^{2d}), \quad &k \text{ even},\\
            S^{\psi_0}(L^2(\mathbb R^d)), \quad &k \text{ odd}.
        \end{cases}
    \end{align}
\end{thm}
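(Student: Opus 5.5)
The plan is to translate Theorem \ref{Theorem_interpolation_by_extrapolation} into the language of quantum harmonic analysis, using the Weyl quantization $\mathrm{op}^w$ as the dictionary. By definition $\mathrm{op}^w: s_\psi^w \to S^\psi(L^2(\mathbb R^d))$ is an isometric isomorphism for every Young function $\psi$. So it suffices to show that each of the convolutions \eqref{eq:conv_qha1}--\eqref{eq:conv_qha3} corresponds, under $\mathrm{op}^w$, to the ordinary convolution of Weyl symbols up to a fixed normalizing constant and possibly a reflection $\beta_-$. Both of these are harmless, since $\beta_-$ is an isometry on $L^\psi(\mathbb R^{2d})$ and on $s^w_\psi$ (as $U$ is unitary).

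The key identities I would verify first on Schwartz functions, that is, for $A=\mathrm{op}^w(a)$ and $B=\mathrm{op}^w(b)$ with $a,b\in\mathcal S(\mathbb R^{2d})$:
\begin{itemize}
\item The covariance $W_z\,\mathrm{op}^w(a)W_z^\ast=\mathrm{op}^w(a(\cdot-z))$ gives $f\ast A=\mathrm{op}^w(f\ast a)$, by integrating in $z$.
\item Similarly $\beta_-(\mathrm{op}^w(b))=\mathrm{op}^w(\beta_- b)$.
\item The Moyal identity $\operatorname{tr}(\mathrm{op}^w(a)\mathrm{op}^w(c))=(2\pi)^{-d}\int a c$ then yields $A\ast B=(2\pi)^{-d}\,a\ast b$.
\end{itemize}
These are standard facts (cf.\ \cite{werner84, Luef_Skrettingland2018a}), and I would cite rather than reprove them.

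With these identities, an iterated convolution of $k$ operators and $n-k$ functions is, in any bracketing, $\mathrm{op}^w$ (if $k$ is odd) or the identity (if $k$ is even) applied to $c_k\, a^1\ast\dots\ast a^k\ast f^{k+1}\ast\dots\ast f^n$. Here $c_k$ is a power of $(2\pi)^{-d}$, and some arguments may carry a reflection. The parity bookkeeping is forced: each operator--operator convolution yields a function, each operator--function convolution yields an operator, and function--function convolution stays a function. Commutativity and associativity of the underlying function convolution let us reorder to match the form in Theorem \ref{Theorem_interpolation_by_extrapolation}. Applying that theorem to the symbols and transporting back via $\mathrm{op}^w$ gives the bound on the dense Schwartz-class subspace.

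The main obstacle is the extension from Schwartz classes to the full spaces. Since $p_{\psi_j}<\infty$ ($\Delta_2$), Schwartz functions, and $\mathrm{op}^w(\mathcal S)$, are dense in $L^{\psi_j}$ and $S^{\psi_j}$. So the bounded multilinear map extends uniquely by continuity. I would add a remark that on intersections with $L^1$/$S^1$ this extension agrees with the defining integrals, using the consistency of the Werner convolutions on $L^p$/$S^p$ classes.
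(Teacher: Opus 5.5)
Your proposal is correct and takes the same route as the paper. The paper obtains this theorem simply by transporting Theorem \ref{Theorem_interpolation_by_extrapolation} through the Weyl quantization: it uses that $\mathrm{op}^w$ maps $s^w_\psi$ isometrically onto $S^\psi(L^2(\mathbb R^d))$, and it cites rather than proves the correspondence between the QHA convolutions and convolution of Weyl symbols. Your covariance and Moyal computations, together with the density extension, just make that translation explicit. In fact those computations produce no reflection at all, and the constant is a power of $(2\pi)^{-d}$ that depends only on $k$, not on the bracketing.
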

We note that, as dilations of symbols has no convenient phase space reformulation in terms of quantum harmonic analysis, there is no good QHA formulation for Theorem \ref{thm:dilated_conv}.

\section{Discussion}\label{sec:discussion}
Having also discussed weak Orlicz spaces in Section \ref{sec:interpolation}, it is natural to ask whether similar results can be obtained for the mapping properties of the convolution operator on weak Orlicz spaces. We will try to give some perspective on this question.

Upon studying weak $L^p$ spaces, probably one of the first observations that one encounters when studying convolution operators is that convolution is not well-defined on $L^{1}_w \times L^{1}_w$: Indeed, it is not hard to verify that the convolution of the function $\frac{1}{|x|}$ (as a function on $\mathbb R$) with itself is not well-defined. Nevertheless, having this particular obstacle in mind, almost the same results as for usual $L^p$ spaces can be obtained for weak $L^p$ spaces. One way of approaching this is using the real interpolation method instead of the complex interpolation method. Given a compatible couple of Banach spaces $(X_0, X_1)$, we will denote the space obtained by applying the real interpolation method with parameter $\theta \in (0, 1)$ and $1 \leq r \leq \infty$ as $[X_0, X_1]_{\theta, r}$ (see, e.g., \cite{Bergh_Lofstrom1976} for details on the real interpolation method). In particular for any measure space $(\Omega, \mathcal A, \mu)$ it is well-known that the real interpolation method applied to spaces of $p$-integrable functions $L^p(\mu) := L^p(\Omega, \mathcal A, \mu)$ yields Lorentz spaces $L^{p, q}(\mu)$ (cf.~\cite[Theorem 5.3.1]{Bergh_Lofstrom1976}). Specifying this to the situation $r = \infty$, and recalling that the Lorentz space $L^{p, \infty}(\mu)$ agrees with the weak $L^p$ space $L_w^p(\mu) := L_w^p(\Omega, \mathcal A, \mu)$, one has:
\begin{align}\label{real_interpolation_lp}
    [L^p(\mu), L^q(\mu)]_{\theta, \infty} = [L^{p}_w(\mu), L^q(\mu)]_{\theta, \infty} = [L^{p}_w(\mu), L^{q}_w(\mu)]_{\theta, \infty} = L^{r}_w(\mu),
\end{align}
whenever $1 \leq p, q \leq \infty$ with $p \neq q$, $\theta \in (0, 1)$ and $\frac{1}{r} = \frac{1-\theta}{p} + \frac{\theta}{q}$. Using this result, one can show that convolution acts as a bilinear continuous operator
\begin{align*}
    \ast: L^{p}_w \times L^{q}_w \to L^{r},
\end{align*}
when $1 < p, q < \infty$ and $\frac{1}{p} + \frac{1}{q} = 1 + \frac{1}{r}$. Note that the argument using the real interpolation method does not extend to $p = q = 1$, simply because $[L^1, L^1]_{\theta, \infty} = L^1$.

Having made significant use of the complex interpolation method in Sections \ref{sec:3} and \ref{sec:4}, one could of course try to adapt the same reasoning with the real interpolation method in place. And there is no problem with actually applying the method. Nevertheless, on more practical terms it turns out that it is seemingly not well-known what the real interpolation spaces between two Orlicz spaces is. One could suspect that the following holds true:
\begin{align*}
    [L^{\phi}, L^\psi]_{\theta, \infty} = [L^\phi_w, L^\psi]_{\theta, \infty} = [L^\phi_w, L^\psi_w]_{\theta, \infty} = L^{[\phi, \psi]_\theta}_w,
\end{align*}
at least under some reasonable assumption on $\phi, \psi$, at least ensuring that $L^\phi \neq L^\psi$. If such a result would hold true, then results analogous to those obtained in Sections \ref{sec:3} and \ref{sec:4} could be obtained.

To give some demonstration of this method, we show that the results on convolutions and dilated convolutions can be extended to weak Schatten ideals, using the method sketched above. In the following, we denote by $s^w_{p, \infty}$ the class of Weyl symbols of operators in the weak $p$-Schatten ideal, normed in the obvious way.
\begin{thm}
Let $1 < p_1, \dots, p_n < \infty$ and $r > 1$ such that
\begin{equation}
    \frac{1}{p_1} + \dots + \frac{1}{p_n} = \frac{1}{r} + n-1.
\end{equation}
Then, for any $k = 0, \dots, n$ the $(n-1)$-times iterated convolutions acts as a continuous multi-linear operator: 
\begin{align*}
        s_{p_1, \infty}^w \times \dots \times s_{p_k, \infty}^w \times L^{p_{k+1}}_w(\mathbb R^{2d}) \times \dots \times L^{p_n}_w(\mathbb R^{2d}) \to \begin{cases}
            L^{r}(\mathbb R^{2d}), \quad &k \text{ even},\\
            s_{r}^w, \quad &k \text{ odd}.
        \end{cases}
    \end{align*}
\end{thm}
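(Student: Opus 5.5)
We deduce the statement from the classical (strong-type) Young inequality for iterated convolutions by multilinear real interpolation, taking the $\infty$-parameter in each factor.

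\textbf{Endpoint bounds.} The input is the Schatten/Lebesgue version of Young's inequality. By Theorem \ref{Theorem_interpolation_by_extrapolation} with $\psi_j(t)=t^{p_j}$, or directly from the known $L^p$ results of \cite{Toft2002, werner84}, the $(n-1)$-fold iterated convolution is continuous
\begin{align*}
s_{q_1}^w\times\dots\times s_{q_k}^w\times L^{q_{k+1}}\times\dots\times L^{q_n}\to L^{\rho}\ (\text{resp. } s_\rho^w).
\end{align*}
This holds whenever $1\le q_j\le\infty$ and $\sum_j 1/q_j = n-1+1/\rho$ with $1\le \rho\le\infty$. In this range the strong-type estimates are available on the full open simplex of exponents, not only for Orlicz data.

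\textbf{Interpolation.} We apply the multilinear real interpolation theorem (e.g.\ \cite[Theorem 4.4.1]{Bergh_Lofstrom1976}, iterated, or the Lions--Peetre multilinear version) in one variable at a time. Fix $j$ and the other exponents near $p_i$. We choose $q_j^0<p_j<q_j^1$, both in $(1,\infty)$ and close to $p_j$, so that the output exponents $\rho^0,\rho^1$ remain in $[1,\infty]$ and are distinct. This is possible because $r>1$, $p_j\in(1,\infty)$, and the constraint is affine in $1/q_j$. Real interpolation with parameter $(\theta,\infty)$ in the $j$-th slot uses \eqref{real_interpolation_lp} for $L^p$, and its Schatten analogue $[S^{q^0},S^{q^1}]_{\theta,\infty}=S^{p,\infty}$ (via Simon's result, as in the proof of Theorem \ref{thm:complex_interpolation_orlicz}). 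This replaces the $j$-th factor by its weak version, but only yields a weak target $L^{r}_w$. To land in strong $L^r$ we need the bilinear/multilinear real interpolation theorem in its ``$1/\infty + 1/\infty \to 1/r$'' form: interpolating simultaneously in two variables with $(\theta_1,\infty)\times(\theta_2,\infty)\to(\theta,r)$-type estimates, as in O'Neil's inequality. Concretely, we use the multilinear Marcinkiewicz-type theorem \cite[Exercise 3.13.5]{Bergh_Lofstrom1976}: if $T$ is bounded on enough vertices of a nondegenerate simplex of exponent tuples, then $T:L^{p_1,s_1}\times\dots\times L^{p_n,s_n}\to L^{r,s}$ whenever $\sum 1/s_i\ge 1/s$. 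With all $s_i=\infty$ this would only give $s=\infty$ for $n$ fixed. We must therefore exploit $n\ge2$ and use the sharper O'Neil form $1/s\le \sum 1/s_i$ together with the fact that $L^{r,s}\subset L^{r,r}=L^r$ for $s\le r$. The right form is O'Neil's $L^{p,\infty}\ast L^{q,\infty}\to L^{r,\infty}$ combined with an improvement: convolution maps $L^{p,\infty}\times L^{q,\infty}\to L^{r,\infty}$ and $L^{p,a}\times L^{q,b}\to L^{r,c}$ for $1/c\le 1/a+1/b$. For the strong conclusion we instead use the bilinear interpolation theorem of Janson/Lions--Peetre with $(\theta_0,\infty),(\theta_1,\infty)\to(\theta,1)$, valid when the exponents lie off a line. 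This is the standard proof that weak$\times$weak Young gives strong $L^r$ (cf.\ Grafakos, multilinear Marcinkiewicz with $\theta$'s not collinear). Applying it to the first two factors and keeping the remaining factors strong, then iterating (strong factors may be weakened afterwards by repeating with one weak and one strong pair), gives the claim.

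\textbf{Main obstacle.} The hard step is the multilinear real interpolation producing a strong target from two weak inputs. It requires a genuinely two-dimensional family of endpoint estimates, which is why $1<p_j<\infty$ and $r>1$ are assumed. Verifying it in the mixed Schatten/Lebesgue setting (with $s^w$ factors) needs the real interpolation identities for Schatten ideals, which we take from the commutative $\ell^p$ case via \cite{Simon1976}.
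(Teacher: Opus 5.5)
\textbf{The step that fails.} The step you flag as the main obstacle cannot be supplied by any interpolation theorem, because the statement itself is false.

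The multilinear Marcinkiewicz theorem (Janson, Grafakos--Kalton) gives a target $L^{r,s}$ only with $1/s\le\sum_j 1/s_j$. With all $s_j=\infty$ it therefore gives only $s=\infty$, however non-collinear the vertices are. There is no ``$(\theta_0,\infty),(\theta_1,\infty)\to(\theta,1)$'' version. The O'Neil estimate $L^{p,\infty}\ast L^{q,\infty}\to L^{r,\infty}$ that you quote is sharp.

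Indeed the statement fails already for $k=0$, $n=2$. Put $f_j(x)=|x|^{-2d/p_j}$ on $\mathbb R^{2d}$.
\begin{itemize}
\item Since $\operatorname{vol}\{f_j>s\}=c_j s^{-p_j}$, we have $f_j\in L^{p_j}_w(\mathbb R^{2d})$.
\item Since $2d/p_j<2d$ and $2d/p_1+2d/p_2=2d(1+1/r)>2d$, the integral defining $f_1\ast f_2(x)$ converges for $x\neq 0$.
\item Homogeneity and rotation invariance give $f_1\ast f_2(x)=c\,|x|^{-2d/r}$ with $c>0$. This is not in $L^r(\mathbb R^{2d})$, since the integral diverges logarithmically at $0$ and at $\infty$.
\end{itemize}
For instance, $p_1=p_2=4/3$ and $r=2$ give $f\ast f=c|x|^{-d}\notin L^2(\mathbb R^{2d})$. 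Taking $a^j=|x|^{-2d/p_j}$ for every $j$ gives the same failure for every $n\ge 2$.

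\textbf{Comparison with the paper.} Your route is essentially the paper's, and the paper's argument has exactly this defect. It is written out only for the bilinear dilated analogue (Theorem \ref{thm:real_interpolated}) and declared to carry over. For $k=0$ it amounts to interpolating $L^r\times L^1\to L^r$ and $L^1\times L^r\to L^r$ with parameter $(\theta,\infty)$ in both slots, citing \cite[Exercise 3.13.5]{Bergh_Lofstrom1976}. That Lions--Peetre theorem yields $\bar A_{\theta,p}\times\bar B_{\theta,q}\to\bar C_{\theta,\rho}$ only when $1/\rho=1/p+1/q-1$, which has no solution for $p=q=\infty$. Moreover $(L^r,L^r)_{\theta,\rho}=L^r$ for every $\rho$, so the example above shows that no variant of this bilinear interpolation can rescue the argument.

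\textbf{What is actually true.} In the commutative case, real interpolation gives either a weak target or a strong target with only one weak factor:
\begin{itemize}
\item $L^{p_1,\infty}\ast L^{p_2,\infty}\to L^{r,\infty}$;
\item $L^{p_1,\infty}\ast L^{p_2}\to L^{r,p_2}\subset L^r$, using $p_2<r$.
\end{itemize}
A correct version of the theorem must weaken either the target or the hypotheses in this way; as stated, it cannot be proved.
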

\begin{thm}\label{thm:real_interpolated}
    Let $0 \neq t_1, \dots, t_n \in \mathbb R$ and $c_1, \dots, c_n \in \{ -1, 1\}$ such that
    \begin{align*}
         \frac{c_1}{t_1^2} + \dots + \frac{c_n}{t_n^2} = 1.
    \end{align*}
    Further, let $p_1, \dots, p_n, r> 1$ such that
    \begin{align*}
        \frac{1}{p_1} + \dots + \frac{1}{p_n} = n-1 + \frac{1}{r}.
    \end{align*}
    Then, the map
    \begin{align*}
        \mathcal S(\mathbb R^{2d}) \times \dots \times \mathcal S(\mathbb R^{2d}) \ni (a^1, \dots, a^n) \mapsto a_{t_1}^1 \ast \dots \ast a_{t_n}^n \in \mathcal S(\mathbb R^{2d})
    \end{align*}
    continuously extends to a multi-linear map
    \begin{align*}
        s_{p_1,\infty}^w \times \dots \times s_{p_n,\infty}^w \to s_{r}^w.
    \end{align*}
\end{thm}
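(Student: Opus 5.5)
The approach is multilinear real interpolation, applied to Theorem \ref{thm:dilated_Toft}. Because $s_p^w$ is by definition isometric to the Schatten ideal $S^p(L^2(\mathbb R^d))$, the real interpolation identity \eqref{real_interpolation_lp} carries over verbatim to the sequence spaces of singular values, i.e.\ to Schatten classes. For $1\le p_0\neq p_1\le\infty$ and $\frac1p=\frac{1-\theta}{p_0}+\frac\theta{p_1}$ this gives
\begin{align*}
[s^w_{p_0}, s^w_{p_1}]_{\theta,\infty}=s^w_{p,\infty}.
\end{align*}
The target space must stay the strong space $s_r^w$. So I would not use the naive bilinear interpolation theorem, which would only produce $s^w_{r,\infty}$ in the target. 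Instead I would use the Lions--Peetre bilinear (multilinear) real interpolation theorem in the form \cite[Exercise 3.13.5]{Bergh_Lofstrom1976}. It states that if $T$ maps $A_0\times B_0\to C_0$, $A_0\times B_1\to C_1$ and $A_1\times B_0\to C_1$, then $T:(A_0,A_1)_{\theta_1,q_1}\times (B_0,B_1)_{\theta_2,q_2}\to (C_0,C_1)_{\theta,q}$, where $\theta=\theta_1+\theta_2<1$ and $\frac1q=\frac1{q_1}+\frac1{q_2}-1$ (or $\frac1q\le\dots$). With $q_1=q_2=\infty$ this gives $\frac1q\le -1$, which is useless. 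The exponents therefore have to be arranged so that the target exponent comes out as $r$, by a different mechanism.

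The mechanism I would use is the one behind the classical O'Neil / weak Young inequality $L^{p,\infty}\ast L^{q,\infty}\to L^r$, which appears in the discussion preceding the theorem. I would proceed one factor at a time, by induction on $n$.

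\textbf{Step 1 (bilinear strong-to-strong map).} Fix all but two entries. Theorem \ref{thm:dilated_Toft} then gives a bilinear map $s^w_{a}\times s^w_{b}\to s^w_{c}$ for every admissible triple, with the remaining factors fixed in their $s^w_{p_j}$ spaces and the relation $\frac1a+\frac1b=1+\frac1c-\sum'$ holding.

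\textbf{Step 2 (first factor weak).} Freeze the second argument in a strong space. The map in the first argument is linear from $s^w_{a_0}$ to $s^w_{c_0}$ and from $s^w_{a_1}$ to $s^w_{c_1}$, with $a_0<p_1<a_1$. Real interpolation with parameter $(\theta,r)$ gives $s^w_{p_1,r}\to s^w_{c,r}$. Here I use that $s^w_{p_1,\infty}\subset s^w_{p_1,r'}$ fails, so instead I interpolate with second index $\infty$ in the domain and index $s$ in the target. This is exactly where the bilinear Lions--Peetre theorem enters: $\frac1s=\frac1\infty+\frac1{q_2}$ with $q_2$ the Lorentz index of the second factor.

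\textbf{Step 3 (induction on the Lorentz index).} Proceed inductively over the factors, each time upgrading one factor from strong to weak. Each upgrade is paid for by letting the Lorentz index of another factor, or of the target, move from $t$ to a larger value, using the relation $\frac1q=\frac1{q_1}+\frac1{q_2}$ in the form with $q_j\ge1$. The key constraint is that all $p_j$ and $r$ lie strictly between $1$ and $\infty$, so each factor sits strictly inside an interpolation interval. This openness is provided by the hypotheses $p_j>1$ and $r>1$. Note also that $r<\infty$ is automatic from the exponent relation.

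\textbf{Main obstacle.} The main obstacle is the bookkeeping of the Lorentz second indices, so that after $n$ upgrades the target index is $\le r$. Lorentz spaces $L^{r,q}$ with $q\le r$ embed into $L^r$, so that is sufficient. I expect that the Lions--Peetre condition $\sum_j\frac1{q_j}\ge\frac1q$ for multilinear maps, with all domain $q_j=\infty$, cannot work directly. I would therefore first try reducing to the bilinear O'Neil-type statement. This means grouping the dilated convolution of the first $n-1$ factors via the identity $a_s\ast b_t=(a\ast b_{t/s})_s$ together with Theorem \ref{thm:dilated_Toft}, and then closing by duality with $s^w_{r'}$. Duality turns the problem into trilinear interpolation, where the sharp form of the multilinear Marcinkiewicz theorem (Janson's version, with strong conclusion when the exponents are in general position) yields the strong-type target.
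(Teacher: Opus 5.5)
Your proposal has a genuine gap at the step that carries the whole theorem: obtaining the \emph{strong} target $s^w_r$ although every input has Lorentz second index $\infty$. You never carry this step out. Steps 2 and 3 only restate the index bookkeeping, and every tool you name caps the output. The Lions--Peetre/O'Neil form of bilinear real interpolation yields a target index $w$ with $\frac1w\le\frac1u+\frac1v=0$, i.e.\ only $s^w_{r,\infty}$. Janson-type interpolation of the dual trilinear form needs $\sum_j\frac1{u_j}\ge1$, whereas here $\frac1\infty+\frac1\infty+\frac1{r'}=\frac1{r'}<1$. The model you want to imitate, $L^{p,\infty}\ast L^{q,\infty}\to L^r$, is itself false: on $\mathbb R^m$ one has $|x|^{-m/p}\ast|x|^{-m/q}=c\,|x|^{-m/r}\in L^{r,\infty}\setminus L^r$.

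No argument can close this gap, because the statement is false. Take $d=1$, $n=2$, $t_1=t_2=\sqrt2$, $c_1=c_2=1$, $p_1=p_2=\frac43$, $r=2$. Let $a(z)=\langle z\rangle^{-3/2}$ on $\mathbb R^2$, with Schwartz truncations $a^{(R)}=a\,\chi(\cdot/R)$, where $\chi\in C_c^\infty$, $0\le\chi\le1$ and $\chi=1$ on the unit ball.

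\textbf{Inputs stay bounded.} We have $|\partial^\gamma a^{(R)}(z)|\le C_\gamma\langle z\rangle^{-3/2-|\gamma|}$ with $C_\gamma$ independent of $R$. By the Shubin calculus, $\mathrm{op}^w(a^{(R)})=(H+1)^{-3/4}B_R$, with $H$ the harmonic oscillator and $\sup_R\|B_R\|<\infty$. Hence $\mu_N(\mathrm{op}^w(a^{(R)}))\lesssim N^{-3/4}$ uniformly in $R$, i.e.\ $\sup_R\|a^{(R)}\|_{s^w_{4/3,\infty}}<\infty$.

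\textbf{Output blows up.} The function $a^{(R)}_{\sqrt2}\ast a^{(R)}_{\sqrt2}$ is a constant multiple of $(a^{(R)}\ast a^{(R)})_{\sqrt2}$. Restricting the convolution integral to $|w|\le|z|/2$ gives $a^{(R)}\ast a^{(R)}(z)\gtrsim|z|^{-1}$ for $2\le|z|\le R/2$. Since $\|\mathrm{op}^w(c)\|_{S^2}$ is a constant multiple of $\|c\|_{L^2}$, this yields $\|a^{(R)}_{\sqrt2}\ast a^{(R)}_{\sqrt2}\|_{s^w_2}^2\gtrsim\log R\to\infty$. So no bounded extension $s^w_{4/3,\infty}\times s^w_{4/3,\infty}\to s^w_2$ exists.

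\textbf{The paper's proof has the same defect.} It inserts $s^w_r\times s^w_1\to s^w_r$ and $s^w_1\times s^w_r\to s^w_r$ into the bilinear real interpolation theorem of \cite[Exercise 3.13.5]{Bergh_Lofstrom1976} with both domain second indices equal to $\infty$. That theorem rests on Young's inequality $\ell^u\ast\ell^v\to\ell^w$ for the $J$-sequences, so it needs $\frac1u+\frac1v\ge1$.

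\textbf{What real interpolation does give.} It legitimately yields $s^w_{p_1,u}\times s^w_{p_2,v}\to s^w_r$ whenever $\frac1u+\frac1v\ge1$. From the three endpoint bounds $s^w_1\times s^w_1\to s^w_1$, $s^w_1\times s^w_\infty\to s^w_\infty$ and $s^w_\infty\times s^w_1\to s^w_\infty$, it also yields the weak-target bound $s^w_{p_1,\infty}\times s^w_{p_2,\infty}\to s^w_{r,\infty}$. This is what your own observation, that Lions--Peetre is useless with all second indices equal to $\infty$, was signalling.
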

We only prove the second of those two theorems, and also just for the bilinear case. This shows the essence of the proofs with the real interpolation method, which can be applied in the same manner for proving the first theorem. The extension to the multi-linear case is also rather straightforward with the use of the reiteration theorem, having for example the reasoning from the proof of Theorem \ref{prop:iterated_interpolation} in mind.
\begin{proof}[Proof of Theorem \ref{thm:real_interpolated}]
    For the bilinear case, note that we have:
    \begin{align*}
        s_r^w \times s_1^w &\to s_r^w\\
        s_1^w \times s_r^w &\to s_r^w.
    \end{align*}
    Using \ref{real_interpolation_lp} for sequence spaces and applying \cite[Theorem A.2.3]{Simon1976} shows that the spaces $s_p^w$ follow the same interpolation behavior as the $L^p$ spaces: $[s_{p_0}^w, s_{p_1}^w]_{\theta, \infty} = s_{p_\theta, \infty}^w$ with $\frac{1}{p_\theta} = \frac{1-\theta}{p_0} + \frac{\theta}{p_1}$.
    
    With this observations, we now have $[s_r^w, s_1^w]_{\theta, \infty} = s_{r/(\theta(r-1)+1), \infty}^w$ and $[s_1^w, s_r^w]_{\theta, \infty} = s_{r/(\theta(1-r) + r), \infty}^w$. Applying this with the bilinear version of the real interpolation method (see, e.g., \cite[Exercise 3.13.5]{Bergh_Lofstrom1976}) shows that dilated convolution maps:
    \begin{align*}
        s_{r/(\theta(r-1)+1), \infty}^w \times s_{r/(\theta(1-r) + r), \infty}^w \to s_r^w.
    \end{align*}
    With $p = r/(\theta(r-1)+1)$ and $q = r/(\theta(1-r) + r)$ the mapping property now follows. 
\end{proof}

\bibliographystyle{acm}
\bibliography{main}

@article{Liu_Wang2013,
    author = {Liu, PeiDe and Wang, MoaFa},
    title = {Weak {O}rlicz spaces: Some basic properties and their
applications to harmonic analysis},
    journal = {Sci. China, Math.},
    year = 2013,
    volume = 56,
    issue = 4,
    pages = {789-802},
}

@article{Bonino_etal2023,
    author = {Bonino, Matteo and Corisasco, Sandro and Petersson, Albin and Toft, Joachim},
    title = {FOURIER TYPE OPERATORS ON {O}RLICZ SPACES AND
THE ROLE OF {O}RLICZ {L}EBESGUE EXPONENTS},
    journal = {Mediterr. J. Math.},
    year = 2024,
    volume = 21,
    issue = 8,
    pages  = {Article number 219},
    note = {24 pp.},
}

@article{Bekjan_Chen_Liu_Jiao2011,
    author = {Bekjan, T.~N. and Chen, Z. and Liu, P. and Jiao, Y.},
    title = {Noncommutative weak {O}rlicz spaces and martingale inequalities},
    journal = {Studia Math.},
    year = 2011,
    volume = 204,
    issue = 3,
    pages = {195-212},
}

@article{Fack_Kosaki1986,
    author = {Fack, T. and Kosaki, H.},
    title = {Generalized $s$-numbers of $\tau$-measurable operators},
    journal = {Pacific J. Math.},
    volume = 123,
    issue = 2,
    year = 1986,
    pages = {269-300},
}

@incollection{Pisier_Xu2003,
    author = {Pisier, G. and Xu, Q.},
    title = {Non-commutative {$L^p$} spaces},
    booktitle = {Handbook of the geometry of Banach spaces, Vol. 2},
    publisher = {North-Holland, Amsterdam},
    year = {2003},
    pages = {1459–1517},
}

@unpublished{Xu,
    author = {Xu, G.},
    title = {Noncommutative {$L_p$}-spaces and martingale inequalities},
    note = {unpublished},
}

@BOOK{Bergh_Lofstrom1976,
	author = {Bergh, J. and L\"{o}fstr\"{o}m, J.},
	title = {{Interpolation Spaces: An Introduction}},
	year = {1976},
	publisher = {Springer Verlag, Berlin - New York},
	series = {Die Grundlehren der mathematischen Wissenschaften in Einzeldarstellungen},
	volume = {223},
}

@book{Harjulehto_Hasto2019,
    author = {Harjulehto, P. and H\"{a}st\"{o}, P.},
    title = {{Orlicz Spaces and Generalized Orlicz Spaces}},
    publisher = {Springer, Cham},
    year = 2019, 
    series = {Lecture Notes in Mathematics},
    volume = {2236},
}

@article{Simon1976,
    author = {Simon, B.},
    title = {{Analysis with weak trace ideals and the number of bound states of Schr\"{o}dinger operators}},
    journal = {Trans. Amer. Math. Soc.},
    volume = 224,
    year = 1976,
    issue = 2,
    pages = {367–380},
}

@article{werner84,
	title = {{Quantum Harmonic Analysis on Phase Space}},
	author = {Werner, R.},
	year = {1984},
	journal = {J. Math. Phys.},
	volume = {25},
	issue = {5},
	pages = {1404–1411},
}

@article{Toft2002,
    title = {Continuity properties in non-commutative convolution algebras, with applications in pseudo-differential calculus},
    author = {Toft, J.},
    year = 2002,
    journal = {Bull. Sci. Math.},
    volume = 126,
    issue = 2,
    pages = {115–142},
}

@book{Hiai2021,
    author = {Hiai, F.},
    title = {{Lectures on selected topics in von Neumann algebras}},
    publisher = {EMS Press, Berlin},
    year = {2021},
    series = {EMS Ser. Lect. Math.},
}

@article{Fulsche_Galke2025,
	author = {Fulsche, R. and Galke, N.},
	year = 2025,
	title = {Quantum Harmonic Analysis on locally
compact abelian groups},
	journal = {J. Fourier Anal. Appl.},
    volume = 31,
    pages = {article number 13},
}

@article{Luef_Skrettingland2018a,
  title={Convolutions for localization operators},
  author={Luef, F. and Skrettingland, E.},
  journal={J. Math. Pures Appl.},
  volume={118},
  pages={288--316},
  year={2018},
  publisher={Elsevier}
}

@article{keyl_kiukas_werner16,
	title = {Schwartz operators},
	author = {Keyl, M. and Kiukas, J. and Werner, R.},
	year = 2016,
	journal = {Rev. Math. Phys.},
	volume = 28,
	issue = 3,
    pages = {1630001},
}

@unpublished{Bauer_Fulsche_Toft2025,
    author = {Bauer, W. and Fulsche, R. and Toft, J.},
    title = {Convolutions of {O}rlicz spaces and {O}rlicz {S}chatten classes, with applications to {T}oeplitz operators},
    note = {preprint available at arXiv:2505.01707},
}

@article{Cwikel_Janson1987,
 author = {Cwikel, M. and Janson, S.},
 title = {Real and complex interpolation methods for finite and infinite families of {Banach} spaces},
 fjournal = {Advances in Mathematics},
 journal = {Adv. Math.},
 issn = {0001-8708},
 volume = {66},
 pages = {234--290},
 year = {1987},
}

\vspace{1cm}
\begin{multicols}{2}
\noindent
Wolfram Bauer\\
\href{bauer@math.uni-hannover.de}{\Letter ~bauer@math.uni-hannover.de}
\\
\noindent
Institut f\"{u}r Analysis\\
Leibniz Universit\"at Hannover\\
Welfengarten 1\\
30167 Hannover\\
GERMANY\\ 

\noindent
Robert Fulsche\\
\href{fulsche@math.uni-hannover.de}{\Letter ~fulsche@math.uni-hannover.de}
\\
\noindent
Institut f\"{u}r Analysis\\
Leibniz Universit\"at Hannover\\
Welfengarten 1\\
30167 Hannover\\
GERMANY\\ 
\end{multicols}

\noindent
Joachim Toft\\
\href{joachim.toft@lnu.se}{\Letter ~joachim.toft@lnu.se}
\\
\noindent
Department of Mathematics\\
Linn{\ae}us University\\
V{\"a}xj{\"o}\\
SWEDEN

\end{document}